\DeclarePairedDelimiter\floor{\lfloor}{\rfloor}
\definecolor{gray}{rgb}{.5,.4,.5}
\newtheorem{defn0}{Definition}[section]
\newtheorem{prop0}[defn0]{Proposition}
\newtheorem{conj0}[defn0]{Conjecture}
\newtheorem{thm0}[defn0]{Theorem}
\newtheorem{lem0}[defn0]{Lemma}
\newtheorem{exercise0}[defn0]{Exercise}
\newtheorem{lemma}[defn0]{Lemma}
\newtheorem{corollary0}[defn0]{Corollary}
\newtheorem{example0}[defn0]{Example}
\newtheorem{remark0}[defn0]{Remark}
\newtheorem{que0}[defn0]{Question}
\newenvironment{defn}{\begin{defn0}}{\end{defn0}}
\newenvironment{prop}{\begin{prop0}}{\end{prop0}}
\newenvironment{conj}{\begin{conj0}}{\end{conj0}}
\newenvironment{thm}{\begin{thm0}}{\end{thm0}}
\newenvironment{lem}{\begin{lem0}}{\end{lem0}}
\newenvironment{cor}{\begin{corollary0}}{\end{corollary0}}
\newenvironment{exm}{\begin{example0}\rm}{\end{example0}}
\newenvironment{remark}{\begin{remark0}\rm}{\end{remark0}}
\newcommand{\V}{{\bf V }}
\renewcommand{\P}{{\mathbb{P}}}
\newcommand{\Z}{{\mathbb{Z}}}
\newcommand{\m}{{\mathcal M}}
\newcommand{\K}{{\mathbb{K}}}
\newcommand{\ZZ}{{\mathbb{Z}}}
\newcommand{\Oc}{{\mathcal{O}}}
\newcommand{\OPP}{{\mathcal{O}_{\mathbb{P}^1 \times \mathbb{P}^1}}}
\newcommand{\hh}{{H^0(\Oc_{\PP}(1,n))}}
\newcommand{\PP}{{\mathbb{P}^1 \times \mathbb{P}^1}}
\newcommand{\OPone}{{\mathcal{O}_{\mathbb{P}^1}}}
\newcommand{\im}{{\mathrm{im}}}
\newcommand{\codim}{{\mathrm{codim}}}
\newcommand{\depth}{{\mathrm{depth}}}
\newcommand{\coker}{{\mathrm{coker}}}
\newcommand{\mm}{{\mathfrak{m}}}
\newcommand\paren[1]{\left(#1\right)}
\newcommand\To{\Rightarrow}
\newcommand\lto{\longrightarrow}
\def\NN{\mathbb{N}}
\def\ZZ{\mathbb{Z}}
\def\Cc{\mathcal{C}}  
\def\Kc{\mathcal{K}}  
\def\pp{\mathfrak{p}}
\def\mm{\mathfrak{m}}
\def\aaa{\mathfrak{a}}
\def\bb{\mathfrak{b}}
\def\f{\textbf{f}}
\def\d{\textbf{d}}
\def\a{\textbf{a}}
\def\k.{\mathcal{K}_{\bullet}}
\DeclareMathOperator{\rad}{rad}
\DeclareMathOperator{\Syz}{Syz}
\DeclareMathOperator{\Span}{Span}
\DeclareMathOperator{\supp}{Supp}
\DeclareMathOperator{\corank}{corank}
\DeclareMathOperator{\dom}{dom}
\DeclareMathOperator{\cod}{cod}
\numberwithin{equation}{section}
\begin{document}
\title[The simplest minimal free resolutions in $\PP$]%
{The simplest minimal free resolutions in $\PP$}

\author{Nicol\'as Botbol}
\address{Departamento de Matem\'atica\\
FCEN, Universidad de Buenos Aires, Ciudad Universitaria, Pab.\ I, 
C1428EGA Buenos Aires, Argentina}
\email{nbotbol@dm.uba.ar}

\author{Alicia Dickenstein}
\thanks{Dickenstein is   supported by ANPCyT PICT 2016-0398,
 UBACYT 20020170100048BA, and CONICET PIP 11220150100473, Argentina}
\address{Departamento de Matem\'atica\\
FCEN, Universidad de Buenos Aires,  and IMAS (UBA-CONICET), Ciudad Universitaria, Pab.\ I, 
C1428EGA Buenos Aires, Argentina}
\email{alidick@dm.uba.ar}

\author{Hal Schenck}
\thanks{Schenck is supported by NSF 1818646, Fulbright FSP 5704}
\address{Department of Mathematics, Auburn University, Auburn AL 36849}
\email{hks0015@auburn.edu}

\subjclass[2001]{Primary 14M25; Secondary 14F17}
\keywords{Bihomogeneous ideal, Syzygy, Free resolution, Segre map}

\begin{abstract}
We study the minimal bigraded free resolution of 
an ideal with three generators of the same bidegree, contained in the 
bihomogeneous maximal ideal $ \langle s,t\rangle \cap \langle u,v \rangle$ of the bigraded ring $\K[s,t;u,v]$. Our analysis
involves tools from algebraic geometry (Segre-Veronese varieties), 
classical commutative algebra (Buchsbaum-Eisenbud criteria for exactness, Hilbert-Burch theorem), and
homological algebra (Koszul homology, spectral sequences). We treat in detail the case in which the bidegree is $(1,n)$.
We connect our work to a conjecture of Fr\"oberg-Lundqvist 
on bigraded Hilbert functions, and close with a number of open problems.
\end{abstract}

\maketitle

\section{Introduction}
In this chapter, we consider bigraded minimal free resolutions in the first nontrivial case. Let 
$R=\K[s,t;u,v]$ be the bigraded polynomial ring, where $\{s,t\}$ are of degree
$(1,0)$ and $\{u,v\}$ are of degree $(0,1)$; $R$ is graded by
$\Z^2$. 
For ${\bf d}=(d_1,d_2)$, we consider a three dimensional
subspace $W = \Span \{f_0,f_1,f_2\}   \subseteq R_{\bf d}$, with
the additional constraint that 
\begin{equation}\label{eq:B}
I_W= \langle f_0,f_1,f_2\rangle \mbox{ satisfies }\sqrt{I_W} =  \langle s,t\rangle \cap \langle u,v \rangle.
\end{equation}
This generic condition arises from the  natural geometric condition of being {\em basepoint free}, defined in \S \ref{AGBackground} below.
We study the minimal free resolution of $I_W$ and we give precise results when ${\bf d} =(1,n)$ and $n \ge 3$.\newline

\begin{exm}~\label{11case}
For ${\bf d} = (1,1)$ 
the bigraded Betti numbers of $I_W$ are always
\begin{table}[ht]
\begin{supertabular}{|c|}
\hline 
  $0 \leftarrow I_W \leftarrow (-1,-1)^3 \stackrel{\partial_1}{\longleftarrow} \begin{array}{c}
( -1,-3)\\
 \oplus \\
 (-2,-2)^3\\
 \oplus \\
(-3,-1)\\
\end{array} \stackrel{\partial_2}{\longleftarrow} 
\begin{array}{c}
 (-2,-3)^2\\
 \oplus \\
(-3,-2)^2\\
 \end{array} \stackrel{\partial_3}{\longleftarrow} 
(-3,-3)\leftarrow 0$ \\
\hline 
\end{supertabular}
\vskip .1in
\end{table} \newline
\noindent The degree $(2,2)$ syzygies are Koszul. The first syzygies of
degree $(1,3)$ and $(3,1)$ involve only one set of variables, and arise from the vanishing of a determinant (see Lemma~\ref{Alicia}). The reader is encouraged to work out the remaining differentials. 
\end{exm}
\noindent The ${\bf d} = (1,2)$ case is more complex, and \cite{cds} shows that there are two 
possible bigraded minimal free resolutions for $I_W$. The resolution type is determined by how $\P(W) \subseteq \P(R_{1,2}) = \P^5$ 
meets the image $\Sigma_{1,2}$ of the Segre map $\P^1 \times  \P^2 \stackrel{\sigma_{1,2}}{\longrightarrow} \P^5$ of factorizable polynomials.

\subsection{Motivation from geometric modeling}\label{geomModel}
In geometric modeling, it is often useful to approximate a surface in $\P^3$ with a rational
surface of low degree. Most commonly the rational surfaces
used are $\PP$ or $\P^2$, and the resulting objects are known 
as {\it tensor product surfaces} and 
 {\it triangular surfaces}. See, for 
example, \cite{bdd},\cite{bc},\cite{c},\cite{cgz},\cite{sc}. 
An efficient way to compute the implicit equation is via approximation complexes 
\cite{hsv1},\cite{hsv2} which use syzygy data as input.

A tensor product surface is mapped to $\P^3$ by a four dimensional subspace $V \subseteq R_{\bf d}$. For ${\bf d}=(1,2)$, Zube describes the singular locus in  \cite{z1}, \cite{z2}, and in \cite{egl}, \cite{gl}, Elkadi-Galligo-L\^{e} 
use the geometry of a dual scroll to analyze the image. When
$\sqrt{I_V} = \langle s,t\rangle \cap \langle u,v \rangle$, \cite{ssv} shows that there are exactly six types of free resolution possible,
and analyzes the approximation complexes for the distinct
resolutions. Degan \cite{d} examines the situation when the subspace has basepoints. 
For a three dimensional subspace $W \subseteq V$, the ideals $I_V$ and $I_W$ are related via linkage in \cite{ssv}.

\subsection{Mathematical background}\label{MathBackground}
We start with a quick review of the cast of principal mathematical players, referring to \cite{ebig} and \cite{GeomSyz} for additional details. 
\subsubsection{Bigraded Betti numbers}\begin{defn}\label{BettiTable}
For a bihomogeneous ideal $I \subseteq R= \K[s,t;u,v]$ and ${\bf d} \in \Z^2$, the {\em bigraded Betti numbers} are
 \[
\beta_{i,{\bf a}} = \dim_{\K}Tor_i(R/I,\K)_{\bf a}.
\]
\end{defn}

For all nonnegative integers $i$ and all  bidegrees ${\bf a}$, $\beta_{i,{\bf a}}$ is the number of copies of $R(-{\bf a})$ appearing in the $i$th module of the minimal free resolution of $I$.
Since ${\bf a} \in \Z^2$, the $\beta_{i,{\bf a}}$ cannot be displayed in the $\Z$-graded Betti table format \cite{GeomSyz}. 
Bigraded regularity is studied in \cite{acd},\cite{hw},\cite{r}, and multigraded regularity was introduced in \cite{MS}. \newline

\begin{exm}~\label{smooth12}[\cite{cds}, Theorem~7.8]
Let $\Sigma_{1,2}$ be the Segre variety of $\P^1 \times \P^2 \subseteq \P^5= \P(U)$, where $U$ has basis $\{su^2, suv, sv^2,tu^2,tuv,tv^2\}$. Then $W \cap \Sigma_{1,2}$ is a smooth conic iff $I_W$ has the bigraded Betti numbers as below. 
\begin{table}[ht]
\begin{supertabular}{|c|}
\hline 
  $0 \leftarrow I_W \leftarrow (-1,-2)^3 \stackrel{\partial_1}{\longleftarrow} \begin{array}{c}
( -1,-6)\\
 \oplus \\
 (-2,-4)^3\\
 \oplus \\
(-3,-2)\\
\end{array} \stackrel{\partial_2}{\longleftarrow} 
\begin{array}{c}
 (-2,-6)^2\\
 \oplus \\
(-3,-4)^2\\
 \end{array} \stackrel{\partial_3}{\longleftarrow} 
(-3,-6)\leftarrow 0$ \\
\hline 
\end{supertabular}
\end{table}
\vskip -.1in
For example, $\beta_{1,(2,4)} = 3$ and $\beta_{2,(3,4)}=2$.
\end{exm}
\pagebreak

\subsubsection{Bigraded algebra and line bundles on $\PP$}\label{AGBackground}
As noted earlier, the constraint that $\sqrt{I_W}$ is the bihomogeneous maximal ideal in~\eqref{eq:B}
arises as a natural geometric condition, and we give a 
quick synopsis; for additional details, see \S V.I of \cite{h}. 

A line bundle ${\mathcal L}$ on the abstract variety $\PP$ is characterized by a choice of ${\bf d} \in \Z^2$ 
and we write $\OPP({\bf d})$ for ${\mathcal L}$. Although the global sections 
\[
H^0(\OPP({\bf d}))= R_{\bf d}
\]
are not functions on $\PP$, their ratios give well defined functions on $\PP$ and so zero sets of sections are
defined.

The upshot is that to realize $\PP$ as a subvariety of $\P^n$, we
choose an $n+1$ dimensional subspace $W \subseteq R_{\bf d}$ with
${\bf d} \in \Z^2_{>(0,0)}$. As long as the $f_i \in W$ do not
simultaneously vanish on $\PP$, this gives a regular map from $\PP$ to $\P^n$. 
The condition that the $f_i$ do not simultaneously vanish at a point
of $\PP$ is exactly the condition~\ref{eq:B}; in this situation $W$ is
said to be {\em basepoint free}. For example, if $W=\Span
\{su,sv,tu\}$, then $[(0:1),(0:1)] \in \PP$ is a basepoint of $W$.

\subsubsection{Koszul homology and bigraded Hilbert series}

 Koszul homology is defined and discussed in \S 2. We will give in Definition~\ref{def:hda} the precise conditions under which a basepoint free system of polynomials ${\bf f} = \{f_0,f_1,f_2\} \subseteq R_{\bf d}$ is {\em generic}. 
 In this case, we make a conjecture in \S 2 on the
bigraded Betti numbers, and show that it is equivalent to a recent
conjecture made by Fr\"oberg-Lundqvist in \cite{FL} on the bigraded Hilbert series of $R/I_W$.

\subsection{Roadmap of this chapter}
Below is an overview of the sections which make up this chapter. 

\begin{itemize}
\item In \S 2, we study the Koszul homology of $I$; the first homology encodes the non-Koszul first syzygies. 
The spectral sequence of the \v{C}ech-Koszul double complex has a
single $d_3$ differential, and we explain the connection to local cohomology $H^{\bullet}_B$.  We make a conjecture about the the first Koszul homology module for {\em generic} $W,$ and connect it to a conjecture of Fr\"oberg-Lundqvist \cite{FL} on the Hilbert series of generic bigraded ideals.
\vskip .1in
\item In \S 3, we use tools from commutative algebra such as the Hilbert-Burch theorem to shed additional light on the first syzygies. 
\vskip .1in
\item In \S 4, we connect the minimal free resolution to the image of the Segre variety $\Sigma_{1,n}$, obtaining canonical syzygies in certain degrees, without any assumptions on genericity. The geometry of $W \cap \Sigma_{1,n}$ plays a key role.
\vskip .1in
\item In \S 5 we prove results on higher Segre varieties, in
  particular about how the geometry of the intersection of $W$ with
  such varieties influences the free resolution. We close with a number of questions.
\end{itemize}

\section{Koszul homology $H_1(\k.(\f,R))$ and the generic case}\label{sec:d}

We start this section with an overview of Koszul homology.
We then prove in Theorem~\ref{thm:H1} a characterization of
the first Koszul homology associated to our ideal $I_W$ under the assumption~\eqref{eq:B}.
Corollary~\ref{cor:H1} then gives a concrete representation of $H_1$, that we make explicit
in Examples~\ref{ex:maps} and~\ref{ex:maps6} for factorizable polynomials. Section~\ref{ssec:gencase}
treats the generic case (see Definition~\ref{def:hda}). In this case, we specify some values of 
the dimensions $(H_1)_{\bf a}$ for $\a \in \Z^2_{\ge 0}$ and we state Conjecture~\ref{conj:ANH}
about these dimensions.
We prove in Proposition~\ref{prop:conjs} that our Conjecture is equivalent to  Conjecture~\ref{conj:RS}  by Ralf Fr\"oberg and Samuel Lundqvist (Conjecture~8 in~\cite{FL}).  

\subsection{Koszul Homology}  \label{ssec:K} 
For notation, we write $\aaa=\langle s,t \rangle$, $\bb= \langle u,v
\rangle$, $B=\aaa \cap \bb$ and $\mm=R_+=(s,t,u,v)$
\begin{defn}
For a sequence of polynomials $\f=\{f_0,\ldots,f_m\}$ the Koszul
complex $\k.:=\k.(\f,R)$ is the complex
\[
  \cdots \longrightarrow \Lambda^j(R^{m+1})
  \stackrel{\delta_j}{\longrightarrow} \Lambda^{j-1}(R^{m+1}) \longrightarrow
  \cdots
\]
where
\[
  \delta_j(e_{n_1}\wedge \cdots \wedge e_{n_j}) \mapsto \sum\limits_{i=1}^j
  (-1)^i f_{n_i}   \cdot (e_{n_1}\wedge \cdots \widehat{e_{n_i}} \cdots \wedge e_{n_j})
\]
The $i^{th}$ Koszul homology is $H_i(\k.)$; Koszul cohomology is
$H^i(Hom_R(\k., R))$.
\end{defn}
The Koszul complex is exact iff $\f$ is a regular sequence. We will
focus on the case where $\f=\{f_0,f_1,f_2\}$ is a basepoint free
subset of $R_{\bf d}$. Hence

\begin{equation}\label{eqKoszul1}
 \k.(\f,R): 0\to R(-3{\bf d})\stackrel{\delta_3}{\lto} R(-2{\bf d})^3\stackrel{\delta_2}{\lto} R(-{\bf d})^3\stackrel{\delta_1}{\lto} R\to 0.
\end{equation}

Let $Z_i$ and $B_{i}$ be the modules of Koszul $i$-th cycles and boundaries, graded so that the inclusion maps $Z_i, B_{i}\subset K_i$ are of degree $(0,0)$, and
let $H_i=Z_i/ B_{i}$ denote the $i$-th Koszul homology module. Since $\delta_1(p_1,p_2,p_3)=\sum_{i=1}^3 p_if_i$,
\[
  H_0=\coker(\delta_1)=R/I_W.
\]
Since $\sqrt{I_W} = B$, the codimension of $I_W$ is two, so since $\f$
has three generators, $\f$ is not a regular sequence, and thus $H_1\neq 0$.
Our assumption~\eqref{eq:B} that $\rad(I_W)=B$ means that $\depth(I_W)=2$, and then $H_2=H_3= 0$. 

From the
definition of Koszul homology, the syzygy module $\Syz(\f):=\ker(\delta_1)$. Since 
$H_1\neq 0$, the map $\delta_2$ in the 
Koszul complex \eqref{eqKoszul1} factors through $\Syz(\f)$ as $R(-2{\bf d})^3\stackrel{\delta_2}{\lto} \Syz(\f)$ but is not surjective. 
The module $\im(\delta_2)$ is called the module of Koszul syzygies. Thus, the size of non-Koszul syzygies is measured by $H_1$.


\subsection{Determining $H_1(\k.(\f,R))$} \label{ssec:H1}

Since $\rad(I_W)=B$, the modules $H_0$ and $H_1$ are supported on $B$. In particular, we have that $H_B^i(H_1)=0$ for $i>0$ and 
hence, $H_B^0(H_1)=H_1$. This says that the Koszul complex \eqref{eqKoszul1} is not acyclic globally, but it is acyclic off 
$V(B)$, i.e.\ that for every prime $\pp\not\subset B$ the localization $(\k.(\f,R))_\pp$ of \eqref{eqKoszul1} at $\pp$ is acyclic.

Consider the extended Koszul complex of \eqref{eqKoszul1}
\begin{equation}\label{eqKoszul2}
 \k.: 0\to R(-3{\bf d})\stackrel{\delta_3}{\lto} R(-2{\bf d})^3\stackrel{\delta_2}{\lto} R(-{\bf d})^3\stackrel{\delta_1}{\lto} R\to R/I_W\to 0.
\end{equation}
For the complex \eqref{eqKoszul2} we have that $H_i= 0$ if $i\neq 1$.  The following theorem charac\-terizes
$H_1$.

\begin{thm}\label{thm:H1}
There is an isomorphism of bigraded $R$-modules
\[
 H_1 \cong \ker\paren{ H^2_B(R(-3{\bf d}))\stackrel{\delta}{\to} \paren{H^2_B(R(-2{\bf d}))}^3}.
\]
\end{thm}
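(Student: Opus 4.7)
The plan is to split the Koszul complex \eqref{eqKoszul1} into three bigraded short exact sequences and chase the resulting long exact sequences in local cohomology $H^\bullet_B$. Since $\f$ has no common zeros off $V(B)$, $\k.(\f,R)$ is acyclic off $V(B)$; hence $H_2=H_3=0$ as already noted, and $\delta_3$ is injective. Writing $B_1:=\im(\delta_2)$ and $Z_1:=\ker(\delta_1)$, the complex factors as
\[
\mathrm{(S1)}\ 0 \to R(-3{\bf d}) \xrightarrow{\delta_3} R(-2{\bf d})^3 \to B_1 \to 0, \qquad \mathrm{(S3)}\ 0 \to B_1 \to Z_1 \to H_1 \to 0,
\]
together with $\mathrm{(S2)}\ 0 \to Z_1 \to R(-{\bf d})^3 \to I_W \to 0$.

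The argument will rest on three standard vanishings. First, since $\grade(B)=2$ in the regular ring $R$, $H^i_B(R(-j{\bf d}))=0$ for $i\in\{0,1\}$ and every $j\in\Z$. Second, both $R/I_W$ and $H_1$ are finitely generated modules supported on $V(B)$ (the latter because $\k.$ is acyclic off $V(B)$), so each is annihilated by a power of $B$; consequently $H^0_B$ is the identity on them and $H^i_B$ vanishes for $i>0$. Third, $H^0_B(I_W)\hookrightarrow H^0_B(R)=0$ because $R$ is a domain.

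Applying $H^\bullet_B(-)$ to (S1) and invoking the first vanishing, the relevant portion of the long exact sequence collapses to
\[
0 \to H^1_B(B_1) \to H^2_B(R(-3{\bf d})) \xrightarrow{H^2_B(\delta_3)} H^2_B(R(-2{\bf d}))^3,
\]
identifying $H^1_B(B_1)$ with the kernel in the statement, and also giving $H^0_B(B_1)=0$. The long exact sequence of (S2), combined with the first and third vanishings, forces $H^0_B(Z_1)=H^1_B(Z_1)=0$. Finally the long exact sequence of (S3), using $H^0_B(B_1)=0$, $H^0_B(Z_1)=0$, $H^0_B(H_1)=H_1$ and $H^1_B(Z_1)=0$, reduces to $0 \to H_1 \to H^1_B(B_1) \to 0$, producing the desired bigraded isomorphism $H_1\cong H^1_B(B_1)$. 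Composing the two identifications proves the theorem.

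Because every map in (S1)--(S3) is bigraded of degree $(0,0)$, all connecting homomorphisms respect the $\Z^2$-grading, so the final isomorphism is bigraded. I do not anticipate a conceptual obstacle; the genuinely delicate step is bookkeeping, namely confirming by naturality of the connecting homomorphism that the composition $H^1_B(B_1)\hookrightarrow H^2_B(R(-3{\bf d}))\xrightarrow{H^2_B(\delta_3)} H^2_B(R(-2{\bf d}))^3$ is precisely the map $\delta$ in the statement, and organizing the interlocking vanishings across the three long exact sequences in the correct order.
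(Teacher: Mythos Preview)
Your argument is correct. The three short exact sequences (S1)--(S3) and the vanishings you invoke ($H^i_B(R)=0$ for $i\le 1$, $H^0_B(I_W)=0$, and $H^0_B(H_1)=H_1$, $H^{>0}_B(H_1)=0$) are exactly what is needed, and the chase you outline assembles the isomorphism as claimed. The ``delicate step'' you flag is not actually delicate: the map $\delta$ in the statement is by definition the map induced by the Koszul differential $\delta_3$ on $H^2_B$, which is precisely the map appearing after the connecting homomorphism in the long exact sequence of (S1); exactness there \emph{is} the identification $H^1_B(B_1)=\ker(\delta)$.

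Your route, however, differs from the paper's. The paper applies the \v{C}ech functor to the extended Koszul complex~\eqref{eqKoszul2} and compares the two spectral sequences of the resulting double complex: the horizontal one degenerates to the single term $H_1$ (using the same $B$-torsion facts you use), while the first page of the vertical one has rows $H^i_B(K_j)$ with horizontal differentials induced by the Koszul maps, and a glance at this page yields the kernel description directly. Your approach is more elementary---it avoids spectral sequences entirely and relies only on three interlocking long exact sequences---which makes it accessible to a reader without that machinery. The spectral sequence argument, on the other hand, is more conceptual and scales without modification: it displays at once the full $E_1$-page (including the $H^3_B$ row and the $H^\bullet_B(R/I_W)$ column), which the paper later exploits, and it would handle longer Koszul complexes or more general supports without the need to manufacture and track additional short exact sequences.
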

\begin{proof}
 Consider the \v{C}ech-Koszul double complex $\check \Cc^\bullet_B(\k.)$ that is obtained from \eqref{eqKoszul2} 
 by applying the \v{C}ech functor $\check \Cc^\bullet_B(-)$.

Consider the two spectral sequences that arise from the double complex $\check \Cc^\bullet_B (\k.)$. 
We will denote by $ {}_hE$ the spectral sequence that arises taking first homology horizontally, this is, computing first the 
Koszul 
homology, and by $ {}_vE$ the spectral sequence that is obtained by
computing first the \v{C}ech cohomology. The second page of the
spectral sequence of the horizontal filtration is:
\[
  ^2_hE^{ij} = H^i_B (H_j(\k.)).
  \]
Since $H_B^i(H_1)=0$ for $i>0$ and $H_B^0(H_1)=H_1$, we have
\[
\ _h ^2E^{ij} = H^i_B (H_j) = \left\lbrace\begin{array}{ll}
  H_1& \mbox{for }j=1 \mbox{ and } i=0\\
   0 & \mbox{otherwise.} \end{array}\right.
\]
We conclude that
\[
 H^\bullet_B (H_\bullet) \To H_1.
\]

The second spectral sequence has
\[
  _v ^1E^{ij} = H^i_B (K_j),
\]
where $K_i$ is the $i$-th module from the right in Equation~\eqref{eqKoszul2}. Precisely, we have
\[
  \begin{array}{ccc}
    _v ^1E^{i,-1} &= &H^i_B (R/I_W)\\
    _v ^1E^{i,0} &  = &H^i_B (R)\\
    _v ^1E^{i,1}& = &H^i_B (R(-{\bf d})^3)\\
    _v^1E^{i,2}& = &H^i_B (R(-2{\bf d})^3)\\
    _v ^1E^{i,3}& =& H^i_B (R(-3{\bf d}))
  \end{array}
  \]
Therefore, the $^1E$ page of the vertical spectral sequence is
\[
 \xymatrix@1@C=15pt@R=15pt{ 
 {0} \ar[r] & \omega_{R}^\ast(3{\bf d})\ar[r] & \paren{\omega_{R}^\ast(2{\bf d})}^3\ar[r] & 
\paren{\omega_{R}^\ast({\bf d})}^3\ar[r] & \omega_{R}^\ast\ar[r] & H^3_B(R/_WI)  \\
 0 \ar[r] & {H^2_B(R(-3{\bf d}))}\ar[r]^\delta\ar@{-->}[drr]\ar@{..>}[ddrrr] & 
\paren{H^2_B(R(-2{\bf d}))}^3\ar[r] & \paren{H^2_B(R(-{\bf d}))}^3\ar[r] & H^2_B(R)\ar[r] & H^2_B(R/I_W)\\
 0 & 0 & {0} & 0 & 0 & H^1_B(R/I_W)\\
 0 & 0 & 0 & {0} & 0 & H^0_B(R/I_W)
}
\]

By comparing both spectral sequences, we conclude that
\[
 H_1 \cong \ker\paren{ H^2_B(R(-3{\bf d}))\stackrel{\delta}{\to} \paren{H^2_B(R(-2{\bf d}))}^3}.\qedhere
\]
\end{proof}

\begin{cor}\label{corExactSeqRectangle} The sequence 
\[
 0\to R(-3{\bf d})\stackrel{\delta_3}{\lto} R(-2{\bf d})^3\stackrel{\delta_2}{\lto} \Syz(\f) \to H^2_B(R(-3{\bf d}))\stackrel{\delta}{\to} \paren{H^2_B(R(-2{\bf d}))}^3
\]
is exact.
\end{cor}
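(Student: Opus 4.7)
The plan is to assemble the desired five-term exact sequence by splicing the tail end of the Koszul complex with the explicit description of $H_1$ furnished by Theorem~\ref{thm:H1}.

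First I would recall what has already been established in Section~\ref{ssec:K}: because $\sqrt{I_W}=B$ has codimension two while $\f=\{f_0,f_1,f_2\}$ consists of three elements, the depth sensitivity of Koszul homology (or equivalently the Buchsbaum-Eisenbud criterion) forces $H_2(\k.)=H_3(\k.)=0$. Since $H_3=\ker(\delta_3)$, the map $\delta_3$ is injective, and since $H_2=\ker(\delta_2)/\im(\delta_3)$, the portion
\[
0\to R(-3{\bf d})\stackrel{\delta_3}{\lto} R(-2{\bf d})^3\stackrel{\delta_2}{\lto} \Syz(\f)
\]
is exact, where we use that $\Syz(\f)=\ker(\delta_1)$ so that $\delta_2$ naturally factors through $\Syz(\f)$. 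Moreover, by definition $H_1=\Syz(\f)/\im(\delta_2)$, so this sequence extends to a four-term exact sequence
\[
0\to R(-3{\bf d})\stackrel{\delta_3}{\lto} R(-2{\bf d})^3\stackrel{\delta_2}{\lto} \Syz(\f)\to H_1\to 0.
\]

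Next I would invoke Theorem~\ref{thm:H1}, which gives the bigraded $R$-module isomorphism
\[
H_1\;\cong\;\ker\!\paren{H^2_B(R(-3{\bf d}))\stackrel{\delta}{\to}\paren{H^2_B(R(-2{\bf d}))}^3}.
\]
Replacing $H_1$ in the four-term sequence by this kernel and then composing the surjection $\Syz(\f)\to H_1$ with the inclusion of $H_1\cong\ker(\delta)$ into $H^2_B(R(-3{\bf d}))$ produces the claimed five-term sequence. Exactness at $\Syz(\f)$ follows from $H_1=\Syz(\f)/\im(\delta_2)$, and exactness at $H^2_B(R(-3{\bf d}))$ follows because the image of $\Syz(\f)\to H^2_B(R(-3{\bf d}))$ is exactly $\ker(\delta)$ by construction.

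There is no genuine obstacle: everything reduces to carefully naming the connecting maps. The only point requiring care is the verification that the composition $\Syz(\f)\to H_1\hookrightarrow H^2_B(R(-3{\bf d}))$ agrees with the connecting map coming from the spectral sequence in the proof of Theorem~\ref{thm:H1}; this is automatic from the construction, since the isomorphism of Theorem~\ref{thm:H1} was obtained precisely by comparing the abutments of the two spectral sequences of the \v{C}ech-Koszul double complex, and the induced map on $H_1$ is by construction the one landing in $\ker(\delta)\subseteq H^2_B(R(-3{\bf d}))$.
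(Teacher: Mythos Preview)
Your proposal is correct and follows essentially the same approach as the paper: both argue that the Koszul tail $0\to R(-3{\bf d})\to R(-2{\bf d})^3\to \Syz(\f)\to H_1\to 0$ is exact (using $H_2=H_3=0$) and then splice in the identification $H_1\cong\ker(\delta)$ from Theorem~\ref{thm:H1}. You have simply spelled out in more detail what the paper compresses into two sentences.
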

\begin{proof}
  From equation \eqref{eqKoszul2}, $0\to R(-3{\bf d})\stackrel{\delta_3}{\lto} R(-2{\bf d})^3\stackrel{\delta_2}{\lto} \Syz(\f) \to H_1\to 0$ 
is exact. 
Theorem~\ref{thm:H1} gives $ H_1 \cong \ker\paren{ H^2_B(R(-3{\bf d}))\stackrel{\delta}{\to} \paren{H^2_B(R(-2{\bf d}))}^3}$. The 
result follows by connecting the two sequences.
\end{proof}

\subsection{Understanding $\paren{H_1}_{\a}$}\label{sec:generalcase}

We have the following consequence of Theorem~\ref{thm:H1}.

\begin{cor}
\[
 \supp_{\ZZ^2}(H_1) \subset -\NN\times \NN+(3d_1-2,3d_2)\cup\NN\times -\NN+(3d_1,3d_2-2).
\]
\end{cor}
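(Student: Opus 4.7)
The plan is to invoke Theorem~\ref{thm:H1}, which identifies $H_1$ with a submodule of $H^2_B(R(-3\d))$, so that it suffices to compute the bigraded support of this local cohomology module. The whole proof is then a direct Mayer--Vietoris and \v{C}ech calculation.

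First I would decompose $H^2_B(R)$ using the Mayer--Vietoris long exact sequence for local cohomology applied to $B=\aaa\cap\bb$:
\[
\cdots \to H^i_{\aaa+\bb}(R) \to H^i_\aaa(R)\oplus H^i_\bb(R) \to H^i_B(R) \to H^{i+1}_{\aaa+\bb}(R) \to \cdots.
\]
Since $\aaa+\bb=\mm$, the only nonzero $H^i_\mm(R)$ occurs for $i=4$, and since $\aaa,\bb$ are each generated by a regular sequence of length $2$, the only nonvanishing $H^i_\aaa(R)$ and $H^i_\bb(R)$ occur for $i=2$. The sequence therefore collapses to an isomorphism of $\ZZ^2$-graded $R$-modules
\[
H^2_B(R) \cong H^2_\aaa(R) \oplus H^2_\bb(R).
\]

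Next I would read off the bigraded supports from the \v{C}ech description of the summands. A $\K$-basis of $H^2_\aaa(R)$ consists of monomials $s^a t^b u^c v^d$ with $a,b \le -1$ and $c,d \ge 0$, whose bidegrees $(a+b,c+d)$ range exactly over the quadrant $\{(\alpha,\beta)\in\ZZ^2 : \alpha\le -2,\ \beta\ge 0\} = -\NN\times\NN+(-2,0)$. By the symmetric role of $\aaa$ and $\bb$, the support of $H^2_\bb(R)$ is $\NN\times -\NN+(0,-2)$. Twisting by $-3\d$ translates both quadrants by $(3d_1,3d_2)$, yielding
\[
\supp_{\ZZ^2}\paren{H^2_B(R(-3\d))} = \paren{-\NN\times\NN+(3d_1-2,3d_2)} \cup \paren{\NN\times -\NN+(3d_1,3d_2-2)}.
\]
Since $H_1$ injects into $H^2_B(R(-3\d))$ by Theorem~\ref{thm:H1}, its support is contained in the right-hand side, which is the claim.

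The only real obstacle is bookkeeping: one must be careful that the intrinsic $\ZZ^2$-grading on $H^2_\aaa(R)$ lives in the third quadrant $\{\alpha\le -2,\ \beta\ge 0\}$ and that twisting by $R(-3\d)$ shifts this quadrant by $+3\d$, not $-3\d$. Once the sign conventions are fixed, no further ingredient is needed.
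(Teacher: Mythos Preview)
Your proof is correct and follows essentially the same approach as the paper's: both invoke Theorem~\ref{thm:H1} to embed $H_1$ into $H^2_B(R(-3\d))$, use Mayer--Vietoris with $\aaa+\bb=\mm$ to split $H^2_B(R)\cong H^2_\aaa(R)\oplus H^2_\bb(R)$, read off the support of each summand, and shift by $3\d$. Your version simply spells out more of the details (the collapse of the long exact sequence and the explicit \v{C}ech monomial basis) that the paper summarizes as ``a direct computation.''
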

\begin{proof}
A direct computation using the Mayer-Vietoris sequence yields
\begin{enumerate}
 \item $H^2_B(R)=H^2_{\aaa}(R)\oplus H^2_{\bb}(R)=\paren{\omega_{R_1}^\ast\otimes_{\K} R_2}
\oplus\paren{R_1\otimes_{\K}\omega_{R_2}^\ast}$,
 \item $H^3_B(R)=H^4_{\mm}(R)=\omega_{R}^\ast$,
 \item $H^i_B(R)=0$ for all $i\neq 2,3$, 
\end{enumerate}
where $\omega_S^\ast$ denotes the canonical dualizing module of $S$.
Since we have that the $\supp_{\ZZ^2}(H^2_B(R)) = -\NN\times \NN+(-2,0)\cup\NN\times -\NN+(0,-2)$, by shifting we get that
\[
 \supp_{\ZZ^2}(H_1) \subset -\NN\times \NN+(3d_1-2,3d_2)\cup\NN\times -\NN+(3d_1,3d_2-2). \qedhere
\]
\end{proof}

\begin{cor}\label{cor:H1}
Consider the map $H^2_B(R(-3{\bf d}))\stackrel{\delta}{\to} \paren{H^2_B(R(-2{\bf d}))}^3$. For every $\a=(a_1,a_2)$, we get
\[
 \paren{H_1}_{(a_1,a_2)} \cong \ker\paren{ 
 \begin{matrix}
                                        R_{(3d_1-a_1-2,-3d_2+a_2)}\\ \oplus \\
                                        R_{(-3d_1+a_1,3d_2-a_2-2)}
                                       \end{matrix}
                                       \stackrel{\delta_{a}}{\lto} \paren{ \begin{matrix}
                                       R_{(2d_1-a_1-2,-2d_2+a_2)}\\ \oplus \\
                                        R_{(-2d_1+a_1,2d_2-a_2-2)}
                                        \end{matrix}}^3}
\]
is an isomorphism of $\K$-modules. Identifying the target with
\[R_{(2d_1-a_1-2,-2d_2+a_2)}^3 \oplus R_{(-2d_1+a_1,2d_2-a_2-2)}^3, \mbox{ we have }
\]
\[
\delta_\a=\begin{pmatrix}
                                          \phi_1& 0\\ 0 &\phi_2
                                         \end{pmatrix}, \mbox{ with }
                                         \]
                                         
\begin{eqnarray}\label{def:phi12}
\phi_1:R_{(-3d_1+a_1,3d_2-a_2-2)}\to R_{(-2d_1+a_1,2d_2-a_2-2)}^3, \nonumber \\
\phi_2:R_{(3d_1-a_1-2,-3d_2+a_2)}\to R_{(2d_1-a_1-2,-2d_2+a_2)}^3.
\end{eqnarray}
\end{cor}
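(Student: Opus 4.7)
The plan is to take the global isomorphism provided by Theorem~\ref{thm:H1} and simply pass to the graded piece in bidegree $\a = (a_1,a_2)$, then use the explicit Mayer--Vietoris description of $H^2_B(R)$ that was established in the preceding corollary to rewrite each term as a finite-dimensional space of polynomials.

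First I would recall that by Theorem~\ref{thm:H1},
\[
(H_1)_\a \;\cong\; \ker\!\bigl( H^2_B(R(-3{\bf d}))_\a \stackrel{\delta_\a}{\lto} \bigl(H^2_B(R(-2{\bf d}))_\a\bigr)^3\bigr),
\]
so everything reduces to identifying each bigraded piece $H^2_B(R(-k{\bf d}))_\a = H^2_B(R)_{\a-k{\bf d}}$ for $k=2,3$. From the Mayer--Vietoris decomposition
$H^2_B(R)=H^2_\aaa(R)\oplus H^2_\bb(R)$ just used in the previous corollary, graded local duality (or a direct \v{C}ech computation) gives that the $\K$-vector space $H^2_\aaa(R)_{(k_1,k_2)}$ is nonzero only for $k_1\le -2$ and $k_2\ge 0$, and has the same dimension as $R_{(-k_1-2,k_2)}$; symmetrically, $H^2_\bb(R)_{(k_1,k_2)}$ is nonzero only for $k_1\ge 0$ and $k_2\le -2$, of dimension equal to $\dim_\K R_{(k_1,-k_2-2)}$. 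Substituting $(k_1,k_2)=\a-k{\bf d}$ then identifies $H^2_B(R(-k{\bf d}))_\a$ with
\[
R_{(kd_1-a_1-2,\,-kd_2+a_2)}\;\oplus\;R_{(-kd_1+a_1,\,kd_2-a_2-2)},
\]
which is exactly the shape appearing in the statement.

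The block-diagonal form of $\delta_\a$ is then automatic. The map $\delta$ on $H^2_B$ is induced by the Koszul differential on $R$, which is simply multiplication by linear combinations of the $f_i\in R_{\bf d}$. Such multiplication is an $R$-module map and therefore respects the \emph{separate} local cohomologies $H^2_\aaa(R)$ and $H^2_\bb(R)$; moreover, the supports of these two summands are disjoint in $\Z^2$ (upper-left quadrant vs.\ lower-right quadrant, after the shift by $k{\bf d}$), so no cross-term in the map is even possible in a fixed bidegree $\a$. Hence $\delta_\a$ decomposes as $\phi_1\oplus\phi_2$, where $\phi_1$ is its restriction to the $H^2_\bb$-summands and $\phi_2$ its restriction to the $H^2_\aaa$-summands, yielding the stated form.

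The only mildly delicate point, and the one I expect to be the main obstacle, is bookkeeping the duality: each graded piece of $H^2_\aaa(R)$ is literally the $\K$-dual of a graded piece of a twist of $R$, not that piece itself, so matching it to $R_{(3d_1-a_1-2,-3d_2+a_2)}$ (as opposed to its dual) is a choice of identification. Since the statement is only about an isomorphism of $\K$-modules, this is cosmetic and the dimensions already match; but one must be careful that the resulting maps $\phi_1,\phi_2$ are the transposes of multiplication-by-$f_i$ maps between polynomial spaces, which will be important when computing examples later.
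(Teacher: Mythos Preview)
Your proposal is correct and is exactly the argument the paper has in mind; in fact the paper states this corollary without proof, treating it as an immediate consequence of Theorem~\ref{thm:H1} together with the Mayer--Vietoris computation of $H^2_B(R)$ recorded just before. Your added remark about the duality identification (so that $\phi_1,\phi_2$ are transposes of multiplication-by-$f_i$ maps) is precisely what is used implicitly in Examples~\ref{ex:maps} and~\ref{ex:maps6}, so it is worth keeping.
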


For $d_1,d_2\geq 2$, the previous result gives a description of the kernel $\paren{H_1}_{(a_1,a_2)}$:

\begin{footnotesize}
\[\begin{array}{c||c|c|c|c|c}
\left[3d_2,+\infty \right)			& \ker(\phi_2)&R_{(3d_1-a_1-2,-3d_2+a_2)}&0& 0\\  \hline
3d_2-1		&0&0&0& 0 \\  \hline
\left(2d_2-2,3d_2-2\right]		& 0&0&0& R_{(-3d_1+a_1,3d_2-a_2-2)} \\\hline
\left(-\infty,2d_2-2\right]	& 0 & 0 &0& \ker(\phi_1) \\ \hline\hline
		& \left(-\infty,2d_1-2\right] &\left(2d_1-2,3d_1-2\right]&3d_1-1	& 
\left[3d_1,+\infty \right)  
\end{array}\]
\end{footnotesize}
\medskip

\noindent The next examples illustrate the map $\delta_\a$ of Corollary~\ref{cor:H1} in a particular case in which
the three polynomials $f_i$ can be factored as two polynomials with bidegrees $(1,0)$ and $(0,n)$.

\begin{example0}\label{ex:maps}
Let $\d=(1,n)$, $f_0=su^n$, $f_1=tv^n$, $f_2=(s+t)(u^n+v^n)$, and $(a_1,a_2)=(3,n)$. Then 
\begin{equation}\label{delta}
 \paren{H_1}_{(3,n)} 
 = \ker\paren{ (R_1\otimes_{\K}\omega_{R_2}^\ast)_{(0,-2n+2)} \stackrel{\delta_{(3,n)}}{\lto} 
\paren{ (R_1\otimes_{\K}\omega_{R_2}^\ast)_{(1,-n+2)}}^3}
\end{equation}
and with the standard identification of the canonical dualizing modules, one has
\[
 \paren{H_1}_{(3,n)} 
 \cong \ker\paren{ R_{(0,2n-2)} \lto \paren{ R_{(1,n-2)}}^3}
\]
The map $\delta_{(3,n)}$ in Equation \eqref{delta} is given by multiplication by $f_i$. 
Precisely, given $a\geq 0$, $\delta_{(3,n)}$ is as follows 
\[
\frac{1}{uv}\frac{1}{u^av^{2n-2-a}} \mapsto \frac{1}{uv}\left( \frac{1}{u^av^{2n-2-a}}f_0, 
\frac{1}{u^av^{2n-2-a}}f_1, \frac{1}{u^av^{2n-2-a}}f_2 \right).
\]

Thus, fixing a basis $\mathcal B$ for
$(R_1\otimes_{\K}\omega_{R_2}^\ast)_{(0,-2n+2)}$ and also fixing a basis $\mathcal B'$ for
$(R_1 \otimes_{\K}\omega_{R_2}^\ast)_{(1,-n+2)}^3$, 
$|\delta_{(3,n)}|_{{\mathcal B}{\mathcal B}'}$ is a $(3\cdot 2 (n-1))\times (2n-1)$-matrix given by the coefficients $coef_{\mathcal B'}((f_0,f_1,f_2)\cdot {\mathcal B}_i)$ of the 
$i$-th element of $\mathcal B$ multiplied by one of the $f_j$ ($j$ depending on the row), written in the basis $\mathcal B'$. 
\end{example0}
 \noindent We now exhibit the matrices in Example \ref{ex:maps} in bidegree $(1,6)$.
\begin{example0}\label{ex:maps6}
Set for instance $n=6$ (so $\d=(1,6)$), $|\delta_{(3,n)}|_{{\mathcal B}{\mathcal B}'}$ is a $(3\cdot 10)\times 11$-matrix. One can take
${\mathcal B}=\left\lbrace \frac{1}{uv}\frac{1}{u^6},\hdots,\frac{1}{uv}\frac{1}{v^6} \right\rbrace $ 
and 

${\mathcal B}'=\left\lbrace\frac{1}{uv}\frac{s}{u^4},\hdots,\frac{1}{uv}\frac{s}{v^4},\frac{1}{uv}\frac{t}{u^4},
\hdots\frac{1}{uv }\frac{t}{ v^4} \right\rbrace \times \{(1,0,0),(0,1,0),(0,0,1)\}.$

In this case, one has that the $10$-tuple, corresponding to the `upper third' of the first
 column of  $|\delta_{(3,n)}|_{{\mathcal B}{\mathcal B}'}$, induced by multiplying by $f_0$ is 
\[
coef_{\mathcal B'}(f_0\cdot {\mathcal B}_1)=coef_{\mathcal B'}\left(f_0\cdot \frac{1}{uv}\frac{1}{u^{10}}\right)=
coef_{\mathcal B'}\left(\frac{1}{uv}\frac{s}{u^{4}}\right)=(1,0,0,\hdots,0).
\]
 And, because of the structure of multiplication on $\omega_{R_2}^\ast$, it is easy to see that in 
$(R_1\otimes_{\K}\omega_{R_2}^\ast)_{(1,-n+2)}$,  $f_j\cdot u^{-6}v^{-6}=0$. 
Thus, the 6th column of $|\delta_{(3,n)}|_{{\mathcal B}{\mathcal B}'}$ is 
zero, and the rest are not.

The matrix $|\delta_{(3,n)}|_{{\mathcal B}{\mathcal B}'}$ has the following shape
\[
\left(\begin{array}{c|c|c}
Id_{5\times 5} & 0 & 0 \\
0 & 0 & 0 \\ \hline
0 & 0 & 0 \\ 
0 & 0 & Id_{5\times 5} \\ \hline
Id_{5\times 5}& 0 & Id_{5\times 5} \\
Id_{5\times 5} & 0 & Id_{5\times 5}
\end{array}\right),
\]
where $Id_{5\times 5}$ is the ${5\times 5}$-identity matrix.

Finally, we conclude that $\corank(|\delta_{(3,n)}|_{{\mathcal B}{\mathcal B}'})=1$. 
Since the matrix above induces a morphism from $k^{11}\to  k^{30}$, we have that 
\[
 HF_{H_1}(3,n)= \dim(\ker(|\delta_{(3,n)}|_{BB'}))=\corank(|\delta_{(3,n)}|_{{\mathcal B}{\mathcal B}'})=1.
\]
This in particular says that there is only one non-Koszul syzygy
spanning every other non-Koszul syzygy.
\end{example0}
\noindent Examples~\ref{ex:maps} and \ref{ex:maps6}  are explained by Theorem~\ref{smoothconicres}.

\subsection{The generic case}\label{ssec:gencase}

We give the definition of {\em generic} bihomogeneous polynomials $f_i$ of the same bidegree ${\bf d} \in \ZZ^2_{>0}$. Note
that our assumption~\eqref{eq:B} implies that no $d_i$ could be equal to $0$.

\begin{defn}\label{def:hda}
Given $\d \in \ZZ^2_{>0}$ and $\bf f= \{f_0, f_1, f_2\}$ of bidegree $\d$ satisfying~\eqref{eq:B}, we say that
$\f$ is generic if the maps $\phi_1$ and $\phi_2$ in~\eqref{def:phi12} in Corollary~\ref{cor:H1} have full rank, for any $\a\in \ZZ_{\geq 0}^2$.
\end{defn}
This condition of full rank is not true only under the assumption~\eqref{eq:B}.
For instance, the factorizable polynomials in Example~\ref{ex:maps6} are not generic, but our Conjecture~\ref{conj:ANH} below states that
the maps have full rank for polynomials with generic coefficients. 

If we denote by $n_\d(\a)$ the difference of dimensions:
\[
  n_d(\a):=\dim_{\K}\paren{ 
 \begin{matrix}
                                        R_{(3d_1-a_1-2,-3d_2+a_2)}\\ \oplus \\
                                        R_{(-3d_1+a_1,3d_2-a_2-2)}
                                       \end{matrix}} -    3\dim_{\K} \paren{ \begin{matrix}
                                       R_{(2d_1-a_1-2,-2d_2+a_2)}\\ \oplus \\
                                        R_{(-2d_1+a_1,2d_2-a_2-2)}
                                        \end{matrix}}\in \ZZ[a,d],
\]
we have by Corollary~\ref{cor:H1} that
\[
 \dim_{\K} \paren{H_1}_{(a_1,a_2)} \geq n_\d(\a).
 \]

\medskip


For any real number $c$, denote 
\begin{equation}\label{eq:c}
c_+=\max(c,0), \quad  c_-=\max(0,-c).
\end{equation}
Note that for any $c \in \mathbb R$,
$ c = c_+ - c_-$
and only one of these two numbers can be positive.

Given $\d,\a\in \Z^2 _{\ge 0}$, we set
\[\dom_d(\a):=(0,-3d_1+a_1+1)_+ (0,3d_2-a_2-1)_+ +(0,3d_1-a_1-1)_+ (0, -3d_2+a_2+1)_+\]
\[\cod_d(\a):=(0,-2d_1+a_1+1)_+ (0,2d_2-a_2-1)_+ + (0,2d_1-a_1-1)_+ (0,-2d_2+a_2+1)_+.\]

\medskip

The following Lemma is straigthforward taking into account that  a linear
map $V_1\to V_2$ between two $\K$-vector spaces of finite dimension  is of maximal rank if and only if
the dimension of its kernel of equals $(\dim_{\K}(V_1)-\dim_{\K}(V_2))_+$. 

\begin{lemma} \label{lem:generic}
Let $\bf f= \{f_0, f_1, f_2\}$ of bidegree $\d$ satisfying~\eqref{eq:B}. Then, $\f$ is generic if and only if
\begin{equation}\label{eq:nda}
n_{\d}(\a)\, = \, (\dom_d(\a)- 3 \cod_d(\a))_+.
\end{equation}
In this case,  for any $\a \in \Z_{\ge 0}^2$ we have the equality:
\begin{equation}\label{eq:nda2}
 \dim_{\K} \paren{H_1}_{(a_1,a_2)} = n_\d(\a).
 \end{equation}
\end{lemma}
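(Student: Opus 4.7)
The plan is to apply the elementary fact recalled just before the Lemma---that a linear map $T\colon V_1 \to V_2$ between finite-dimensional $\K$-vector spaces has maximal rank if and only if $\dim_\K \ker T = (\dim V_1 - \dim V_2)_+$---to the map $\delta_\a$ of Corollary~\ref{cor:H1}, whose kernel realizes $(H_1)_\a$.

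First I would do the dimension bookkeeping. Let $V_1$ be the domain and $V_2$ the codomain of $\delta_\a$. The identity $\dim_\K R_{(m,n)} = (m+1)_+(n+1)_+$, valid for all $(m,n)\in\Z^2$, matches the two direct summands of $V_1$ with the two products appearing in $\dom_d(\a)$ and the two direct summands of $V_2$ (each occurring with multiplicity three) with the two products in $3\cod_d(\a)$. Hence $\dim V_1 = \dom_d(\a)$, $\dim V_2 = 3\cod_d(\a)$, and $n_\d(\a) = \dim V_1 - \dim V_2$.

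Second, I would reduce maximal rank of $\delta_\a$ to the conjunction of maximal rank of $\phi_1$ and maximal rank of $\phi_2$. The inequalities that make the domain of $\phi_1$ nontrivial, namely $a_1 \geq 3d_1$ and $a_2 \leq 3d_2-2$, are incompatible with those for $\phi_2$, namely $a_1 \leq 3d_1-2$ and $a_2 \geq 3d_2$. Moreover, when $\phi_1$ has nontrivial domain the codomain of $\phi_2$ automatically vanishes (since $a_1 \geq 3d_1 > 2d_1-2$ as $d_1 \geq 1$), and symmetrically. So after discarding zero summands, $\delta_\a$ coincides with whichever of $\phi_1$ or $\phi_2$ is nontrivial (if any), and therefore $\delta_\a$ has maximal rank if and only if both $\phi_1$ and $\phi_2$ do.

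Combining the two steps, $\f$ is generic iff $\delta_\a$ has maximal rank for every $\a\in\Z^2_{\geq 0}$, iff $\dim(H_1)_\a = (n_\d(\a))_+ = (\dom_d(\a) - 3\cod_d(\a))_+$, which yields~\eqref{eq:nda}; equation~\eqref{eq:nda2} is then the same identity in the range where $n_\d(\a)\geq 0$, so that $(n_\d(\a))_+ = n_\d(\a)$. I do not expect a serious obstacle: the only mildly delicate point is the case analysis above confirming that $\delta_\a$ acts effectively as a single $\phi_i$, which is what lets us apply the maximal-rank criterion separately to each block and recombine.
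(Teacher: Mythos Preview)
Your proposal is correct and follows precisely the approach the paper indicates: apply the maximal-rank criterion stated just before the Lemma to the map $\delta_\a$ of Corollary~\ref{cor:H1}. You supply the one detail the paper omits---the case analysis showing that at each $\a$ at most one of $\phi_1,\phi_2$ has nontrivial domain (and the other block then vanishes entirely), so that full rank of $\delta_\a$ is equivalent to full rank of each $\phi_i$ and the positive part of the sum agrees with the sum of positive parts.
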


In fact, we conjecture that this is indeed the generic behavior

\begin{conj}  \label{conj:ANH}  There exists a nonempty open set in the space of coefficients of the
polynomials $f_i$ where $\f$ is generic
according to Definition~\ref{def:hda} and hence
 $\dim \paren{H_1}_{{\bf a}}=n_{\bf d}({\bf a})$ for any $\a \in \ZZ^2_{\ge 0}$  by Lemma~\ref{lem:generic}. 
\end{conj}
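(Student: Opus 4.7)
The plan is to show that the genericity condition of Definition~\ref{def:hda} is cut out by a nonempty Zariski open subset of the affine space $\mathbb{A} := R_{\bf d}^3$ parametrizing triples $\f=(f_0,f_1,f_2)$. First, the basepoint-free condition~\eqref{eq:B} itself defines a nonempty Zariski open $U_0\subseteq \mathbb{A}$. For each fixed $\a \in \mathbb{Z}_{\geq 0}^2$, the maps $\phi_1$ and $\phi_2$ of Corollary~\ref{cor:H1} (read at bidegree $\a$) are $\K$-linear maps of finite-dimensional vector spaces whose matrix entries are linear forms in the coefficients of $\f$; maximality of their rank is the non-vanishing of an appropriate collection of maximal minors, and hence defines a Zariski open subset $U_\a\subseteq \mathbb{A}$.

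Next, I would invoke the support bound in the corollary preceding Corollary~\ref{cor:H1}: the source and target of $\phi_1,\phi_2$ both vanish unless $\a$ lies in a finite rectangle in $\mathbb{Z}^2$, so only finitely many $\a$ yield a nontrivial rank condition. Thus the conjecture reduces to showing that the finite intersection $U_0\cap \bigcap_\a U_\a$ is nonempty. Since $\mathbb{A}$ is irreducible, it suffices to show that each $U_\a$ is individually nonempty.

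The main obstacle is exactly this last step: exhibiting, for each relevant $\a$, a single $\f\in \mathbb{A}$ for which $\phi_1$ and $\phi_2$ achieve maximal rank. Example~\ref{ex:maps6} shows that factorizable triples are insufficient, so a subtler construction is required. A natural first attempt is a specialization or degeneration argument: choose a monomial-like or otherwise tractable initial configuration for which the ranks of the relevant multiplication maps can be computed combinatorially, and then use semicontinuity of rank under small perturbation to conclude that maximal rank persists on an open neighborhood. A second, more indirect, route is to leverage Proposition~\ref{prop:conjs}: any proof (or partial result) on Fr\"oberg--Lundqvist's Conjecture~\ref{conj:RS} would translate, via Corollary~\ref{cor:H1}, into the required rank statement. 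Either approach requires genuinely new input beyond the formal Koszul/spectral-sequence machinery developed in this section, and this is where the true difficulty lies.
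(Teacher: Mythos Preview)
The statement you are attempting to prove is labeled a \emph{Conjecture} in the paper, and the paper offers no proof of it. Your proposal is therefore not being compared against an existing argument; it is an outline of how one might attack an open problem. You yourself concede this at the end: the ``main obstacle'' you identify---exhibiting, for each relevant $\a$, a single triple $\f$ for which $\phi_1,\phi_2$ attain maximal rank---is exactly the content of the conjecture, and you do not resolve it.

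There is also a genuine gap in your reduction step. You claim that the support corollary forces the source and target of $\phi_1,\phi_2$ to vanish outside a \emph{finite} rectangle in $\Z^2$, so that only finitely many $\a$ impose a nontrivial rank condition. This is false: the region where $\ker(\phi_1)$ may be nonzero is $\{a_1\ge 3d_1,\ a_2\le 2d_2-2\}$ (see the table in \S\ref{sec:generalcase} and Remark~\ref{rem:H1}), which is unbounded in $a_1$; symmetrically for $\phi_2$. Hence infinitely many open conditions $U_\a$ are in play. A countable intersection of nonempty Zariski opens in an irreducible variety need not be open, and over a countable base field need not even be nonempty, so the irreducibility argument you invoke does not close. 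You would need an additional step---for instance, showing that maximal rank at finitely many critical bidegrees forces maximal rank at all others---before your strategy could even reduce the conjecture to the nonemptiness question you isolate.
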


\begin{remark}\label{rem:H1}
Note that Corollary~\ref{cor:H1} proves that Conjecture~\ref{conj:ANH} is always true for polynomials
$f_i$ satisfying assumption~\eqref{eq:B} outside of the range where we
have $(a_1\ge 3d_1 \text{ and } d_2 \le a_2\le 2 d_2-2)$
and  $(a_2\ge 3d_2 \text{ and } d_1 \le a_1\le 2 d_1-2)$.
\end{remark}

\subsection{The Fr\"oberg-Lindqvist conjecture on bigraded Hilbert series}\label{sec:R}

For any bidegree $\a$, we denote by $\chi_{\k.}(\a)$ the Euler characteristic of the $\ell$-strand of the
Koszul complex~\eqref{eqKoszul1} and let $S(x,y) = \sum_{\a \in \Z_{\ge 0}^2} \chi_{\k.}(\a) x^{a_1} y^{a_2}$.
Then,
\begin{equation}\label{eq:S}
S(x,y) \, = \, \frac{(1-x^{d_1}y^{d_2})^3}{(1-x)^2(1-y)^2}.
\end{equation}
We denote by $S(x,y)_+$ the series suppported in $\Z_{\ge 0}^2$ with coefficients $\chi_{\k.}(\ell)_+$.
The following lemmas are straightforward.

\begin{lem0} \label{lem:S+}
$S(x,y)_+$ and $S(x,y)_-$ are also rational functions of $x,y$.
\end{lem0}

\begin{lem} \label{lem:chik}
Define regions 
\[
\begin{array}{ccc}
A_1 & = & a_1 < d_1  \mbox{ or } a_2 < d_2\\
A_2 & = &  (d_1 \le a_1< 2d_1 \mbox{ and } d_2 \le a_2)  \mbox{ or }   (d_1 \le a_1 \mbox{ and  } d_2 \le a_2 < 2d_2)\\
A_3 & = &  (2d_1 \le a_1< 3d_1 \mbox{ and } 2d_2 \le a_2) \mbox{ or }  (a_1 < 3 d_1 \mbox{ and } 2d_2 \le a_2 < 3 d_2)\\
A_4 & = & 3d_1 \le a_1  \mbox{ and } 3d_2  \le a_2
\end{array}
\]
For any $\a \in {\mathbb Z}_{\ge 0}^2$, the coefficient $\chi_{\k.}(\a)$ equals the following:
\begin{footnotesize}
\[
\begin{array}{ccc}
(a_1+1) (a_2+1) & \mbox{ in } & A_1\\
(a_1+1) (a_2+1)  - 3 (a_1-d_1+1)(a_2-d_2+1)& \mbox{ in } & A_2\\
(a_1+1) (a_2+1) - 3 (a_1-d_1+1)(a_2-d_2+1) +  3 (a_1- 2 d_1+1)(a_2- 2 d_2+1)& \mbox{ in } & A_3\\
0 & \mbox{ in } & A_4
\end{array}
\]
\end{footnotesize}
\end{lem}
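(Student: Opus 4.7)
By definition of the Koszul complex \eqref{eqKoszul1}, the Euler characteristic of the $\a$-strand is
\[
\chi_{\k.}(\a) \,=\, \sum_{i=0}^{3}(-1)^{i}\binom{3}{i}\dim_{\K} R_{\a - i{\bf d}},
\]
and since $R=\K[s,t;u,v]$ has its standard bigrading, $\dim_{\K} R_{(b_1,b_2)} = (b_1+1)(b_2+1)$ when $(b_1,b_2)\in \Z_{\ge 0}^2$ and $0$ otherwise. The plan is to stratify $\Z_{\ge 0}^2$ according to the maximal $k\in\{0,1,2,3\}$ for which $\a - k{\bf d}\in\Z_{\ge 0}^2$, identify these four strata with $A_1,A_2,A_3,A_4$, and then substitute.

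Concretely, $\{\a-{\bf d}\notin\Z_{\ge 0}^2\}$ is cut out by $a_1<d_1$ or $a_2<d_2$, which is $A_1$. The stratum where $\a-{\bf d}\in\Z_{\ge 0}^2$ but $\a-2{\bf d}\notin\Z_{\ge 0}^2$ is cut out by $a_1\ge d_1$, $a_2\ge d_2$, together with $a_1<2d_1$ or $a_2<2d_2$; distributing these conditions recovers the two clauses of $A_2$. The analogous descriptions of $A_3$ and $A_4$ come from $\a-2{\bf d}\in\Z_{\ge 0}^2$ with $\a-3{\bf d}\notin\Z_{\ge 0}^2$, and from $\a-3{\bf d}\in\Z_{\ge 0}^2$, respectively. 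On each of the first three strata, dropping the vanishing $\dim R_{\a-i{\bf d}}$ and substituting $(a_1-id_1+1)(a_2-id_2+1)$ for the surviving terms immediately yields the stated formulas for $A_1$, $A_2$, $A_3$.

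The only identity requiring real argument is the vanishing in $A_4$, where all four shifts are nonnegative. There the alternating sum reads
\[
\chi_{\k.}(\a) \,=\,\sum_{i=0}^{3}(-1)^{i}\binom{3}{i}P(i), \qquad P(k):=(a_1+1-kd_1)(a_2+1-kd_2),
\]
and since $P$ is a polynomial of degree at most $2$ in $k$, the right-hand side is the third forward difference $\Delta^{3}P(0)$, which vanishes identically. Hence $\chi_{\k.}(\a)=0$ on $A_4$.

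The only real obstacle is the elementary bookkeeping that the stratification by ``largest $k$ with $\a-k{\bf d}\in\Z_{\ge 0}^2$'' matches the piecewise descriptions of $A_1,\dots,A_4$ given in the statement; once this is verified, the four formulas follow from a trivial substitution plus the one-line finite-difference identity in $A_4$.
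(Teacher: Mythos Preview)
Your argument is correct and is precisely the direct computation the paper has in mind: the paper does not give a separate proof of this lemma, stating only that it is ``straightforward.'' Your use of the third forward difference to kill the sum on $A_4$ is a clean way to handle that case. One minor remark: when you verify that your stratum ``$\a-2{\bf d}\in\Z_{\ge 0}^2$ but $\a-3{\bf d}\notin\Z_{\ge 0}^2$'' matches the paper's $A_3$, note that the second clause of $A_3$ as printed reads $a_1<3d_1$ rather than $2d_1\le a_1$; these differ on the overlap with $A_2$, but the intended description is clearly the one you give (otherwise the $A_2$ and $A_3$ formulas would disagree there), so this is a typographical slip in the statement rather than a flaw in your proof.
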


The following table shows in which bidegrees the Euler characteristic $\chi_{\k.}(\a)$  is posi\-ti\-ve, negative or zero.
\begin{footnotesize}
\[\begin{array}{c||c|c|c|c|c}
\left[3d_2,+\infty \right)			& + & +/hiperb/-& - & 0 & 0 \\  \hline
3d_2-1			& + & + & 0 & 0 & 0 \\  \hline
\left(2d_2-2,3d_2-2\right]			& + & + & + & 0 & - \\  \hline
\left(d_2-1,2d_2-2\right]			& + & + & + & + &  +/hiperb/-  \\  \hline
\left(0,d_2-1\right]			& + & + & + & + & + \\  \hline\hline
		& \left(0,d_1-1\right] & \left(d_1-1,2d_1-2\right] &\left(2d_1-2,3d_1-2\right]&3d_1-1	& 
\left[3d_1,+\infty \right) 
\end{array}\]
\end{footnotesize}

\noindent Here the notation $+/hiperb/- $ means that there is a hyperbola where $\chi_{\k.}$ vanishes, separating the positive from the negative values.
Here is an equivalent version of the Fr\"oberg and Lundqvist Conjecture~8 from~\cite{FL}:

\begin{conj}\label{conj:RS} There exists a nonempty open set in the space of coefficients of the
polynomials $f_i$ for which $\dim(R/I_W)_\a = \chi_{\k.}(\a)_+$ for any $\a \in \ZZ^2_{\ge 0}$.
\end{conj}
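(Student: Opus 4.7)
By Proposition~\ref{prop:conjs}, Conjecture~\ref{conj:RS} is equivalent to Conjecture~\ref{conj:ANH}, so the plan is to show that for generic $\f = (f_0,f_1,f_2) \in R_\d^3$ satisfying~(\ref{eq:B}), each multiplication map $\phi_1^\a$ and $\phi_2^\a$ of Corollary~\ref{cor:H1} attains maximal rank for every $\a \in \Z_{\geq 0}^2$. For a fixed $\a$, the locus $Z_\a \subset \A^{3\dim R_\d}$ where some $\phi_i^\a$ drops rank is Zariski closed, cut out by the vanishing of a specified maximal minor whose entries are linear forms in the coefficients of the $f_j$; the failure of (\ref{eq:B}) defines another closed subset $Z_{\text{bp}}$. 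The conjecture then asserts that the complement of $Z_{\text{bp}}\cup\bigcup_{\a} Z_\a$ contains a nonempty Zariski open set.

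To pass from infinitely many individual rank conditions to a single open stratum, I would first organize the family $\{\phi_1^\a\}_\a$ by fixing the $(u,v)$-degree $b = 3d_2-a_2-2$ and letting $a_1$ vary: for each $b$ in the bounded range $\{d_2,\ldots,3d_2-2\}$, the collection $\{\phi_1^\a\}_{a_1}$ assembles into a single map $\Phi_1^b$ of finitely generated graded $\K[s,t]$-modules, namely multiplication by $(f_0,f_1,f_2)$ between the appropriate shifts of the $H^2_\bb(R)$-components. Generic maximality of rank for $\Phi_1^b$ as a module map forces maximal rank on each graded piece $\phi_1^\a$ for all but finitely many $a_1$, and symmetrically for $\phi_2^{b'}$. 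The infinite family of conditions thus reduces to finitely many: maximal rank for the finitely many module maps $\Phi_1^b$, $\Phi_2^{b'}$, plus finitely many low-degree pieces; the union of these finitely many closed conditions is automatically closed, establishing openness of the good locus.

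The remaining step is to exhibit one $\f$ lying outside all $Z_\a$. I would attempt this in two stages: (i) use the Hilbert-Burch machinery of Section~3 and the Segre-variety analysis of Section~4 to control $\Syz(\f)$ and identify canonical syzygies in specific bidegrees for a carefully chosen $\f$, and then (ii) verify on each slab $b$ that the matrices of $\phi_i^\a$ indeed have the expected rank. Working over $\Q$ with coefficients that are algebraically independent transcendentals reduces (ii) to non-vanishing of explicit determinants, which can be attacked by specialization from a smooth reference point such as a limit of configurations of three generic sections of $\OPP(\d)$ cutting $\PP$ in a smooth zero-dimensional subscheme.

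The main obstacle is precisely this construction: Conjecture~\ref{conj:RS} is the bigraded analogue of the Fr\"oberg conjecture, which is open classically, and the most delicate bidegrees are the hyperbolic strips of Lemma~\ref{lem:chik} where $\chi_{\k.}$ changes sign, since there maximal rank of $\phi_i$ demands a cancellation matching $(\chi_{\k.}(\a))_+$ rather than the naive dimension count. A realistic intermediate milestone is therefore $\d = (1,n)$: the structural results of Sections~3--4 pin down $\Syz(\f)$ and the intersection $W\cap\Sigma_{1,n}$ sufficiently that the finitely many rank conditions on each slab can plausibly be checked directly, settling Conjecture~\ref{conj:RS} in the principal case treated in this paper.
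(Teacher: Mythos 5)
There is a genuine gap, and it is the whole content of the statement: Conjecture~\ref{conj:RS} is stated in the paper as a \emph{conjecture} (it is the Fr\"oberg--Lundqvist Conjecture~8 in the bigraded setting), and the paper offers no proof of it. What the paper does prove is the equivalence with Conjecture~\ref{conj:ANH} (Proposition~\ref{prop:conjs}, via Lemmas~\ref{lem:H0-H1} and~\ref{lem:nd}) and its validity outside the two exceptional ranges recorded in Remarks~\ref{rem:H1} and~\ref{rem:H1bis}. Your proposal correctly reproduces the reduction to maximal rank of the maps $\phi_1,\phi_2$ of Corollary~\ref{cor:H1}, but the step you defer to the end --- exhibiting a single $\f$ for which every $\phi_i^{\a}$ has maximal rank in the exceptional ranges --- is precisely the open problem, and you acknowledge as much. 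A plan that ends with ``the main obstacle is precisely this construction'' is not a proof.

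Two further points on the intermediate steps. First, the openness argument needs more care than ``the bad locus is cut out by a maximal minor'': when $n_{\d}(\a)>0$ the map $\phi_1^{\a}$ is required to be surjective rather than injective, and one must check that the semicontinuous rank function can simultaneously achieve its maximum over all $\a$ in the (infinite) exceptional strip; your slab reduction is plausible for $\d=(1,n)$, where the $(u,v)$-degree $b$ ranges over a finite set and each $\Phi_1^b$ is a map of free $\K[s,t]$-modules, but the claim that generic ``module-level'' maximal rank forces maximal rank in every graded piece is itself a Fr\"oberg-type assertion, not a formal consequence: the entries of $\Phi_1^b$ are structured (Sylvester-like) functions of the coefficients of the $f_i$, not independent generic forms, so genericity of $\f$ does not translate into genericity of the matrix. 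Second, nothing in Sections~3--5 of the paper supplies the needed witness: Theorems~\ref{smoothconicres}, \ref{3pointintersection} and~\ref{1syz3m} all concern \emph{special} (factorizable) $W$, which by Definition~\ref{def:hda} and the discussion after it are typically \emph{not} generic, so they cannot serve as the reference point you propose to specialize from.
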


\noindent We now prove that Conjecture~\ref{conj:ANH} and~\ref{conj:RS} are indeed equivalent.

\begin{lem}\label{lem:H0-H1}
Let $\f=\{f_0, f_1, f_2\}$ be bihomogeneous polynomials of bidegree $\d \in \Z^2_{>0}$ satisfying~\eqref{eq:B}.
We have the equality:
\begin{equation}\label{eq:H01}
\chi_{\k.}(\a) \, = \dim (R/I_W)_\a  \, - \, \dim \paren{H_1}_{\a}.
\end{equation}
\end{lem}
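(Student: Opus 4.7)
The strategy is completely homological: the stated identity is the Euler characteristic formula for a bounded complex of graded vector spaces, applied strand by strand to the Koszul complex~\eqref{eqKoszul1} and paired with the fact, already established in \S\ref{ssec:K}, that only $H_0$ and $H_1$ of this complex are nonzero.

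First I would take the degree $\a$ strand of the Koszul complex~\eqref{eqKoszul1}, which is a finite complex of finite dimensional $\K$-vector spaces:
\[
0 \to R_{\a - 3\d} \to R_{\a - 2\d}^3 \to R_{\a - \d}^3 \to R_\a \to 0.
\]
By definition of $\chi_{\k.}(\a)$, its Euler characteristic equals
\[
\chi_{\k.}(\a)=\dim_{\K} R_\a - 3\dim_{\K} R_{\a-\d} + 3\dim_{\K} R_{\a-2\d} - \dim_{\K} R_{\a-3\d},
\]
which matches the generating-series definition through~\eqref{eq:S}.

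Next I would invoke the standard fact that the Euler characteristic of any bounded complex of finite dimensional vector spaces equals the alternating sum of the dimensions of its homology. Applied strand by strand, this gives
\[
\chi_{\k.}(\a) \, = \, \sum_{i=0}^{3}(-1)^i \dim_{\K} \bigl(H_i(\k.)\bigr)_\a.
\]
Now I would recall from \S\ref{ssec:K} that under the standing assumption~\eqref{eq:B} one has $\depth(I_W)=2$, forcing $H_2=H_3=0$, and that $H_0=\coker(\delta_1)=R/I_W$ by construction. Substituting these identifications into the alternating sum collapses it to
\[
\chi_{\k.}(\a) \, = \, \dim_{\K}(R/I_W)_\a - \dim_{\K}(H_1)_\a,
\]
which is exactly~\eqref{eq:H01}. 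There is no real obstacle here: the lemma is a bookkeeping consequence of Euler characteristic additivity together with the vanishing $H_2=H_3=0$ that was extracted from~\eqref{eq:B} in Section~\ref{ssec:K}; the only point to emphasize is that the argument is carried out degree by degree so that all vector spaces involved are finite dimensional and the alternating sum is well defined.
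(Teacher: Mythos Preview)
Your proposal is correct and follows exactly the same approach as the paper: the authors simply remark that assumption~\eqref{eq:B} forces $H_2=H_3=0$ (as noted in \S\ref{ssec:K}) and declare the proof immediate, which is precisely your Euler-characteristic argument spelled out in detail.
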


As we remarked in Section~\ref{ssec:K}, our assumption~\eqref{eq:B}  implies that $H_2=H_3= 0$, and so the
proof of Lemma~\ref{lem:H0-H1} is immediate.

\medskip

\noindent We compare the conjectural dimension $n_\d(\a)$ of ${H_1}_{\a}$ with the coefficients of $S$.

\begin{lem}\label{lem:nd}
For any $\a \in \Z^2_{\ge 0}$ we have the equality:
\begin{equation}
\dim n_\d(\a) = \chi_{\k.}(\a)_-.
\end{equation}
\end{lem}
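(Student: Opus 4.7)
The plan is a case analysis on the position of $\a$ in $\ZZ^2_{\ge 0}$. The starting observation is that each of the four summands appearing in the definition of $n_\d(\a)$ is supported on a specific corner of $\ZZ^2_{\ge 0}$: for instance $\dim_\K R_{(3d_1-a_1-2, -3d_2+a_2)}$ is nonzero only when $a_1 \le 3d_1-2$ and $a_2 \ge 3d_2$. The four corners are pairwise disjoint and organise into an \emph{upper arm} $U = \{a_1 \le 3d_1-2,\, a_2 \ge 2d_2\}$ and a \emph{right arm} $V = \{a_1 \ge 2d_1,\, a_2 \le 3d_2-2\}$. The two arms are exchanged by the involution of $R$ swapping $(s,t)$ with $(u,v)$, which leaves both $n_\d$ and $\chi_{\k.}$ invariant, so it suffices to analyse $U$ together with the complement of $U \cup V$.

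On the complement of $U \cup V$, every summand in the definition of $n_\d(\a)$ vanishes so $n_\d(\a) = 0$, and the table of Lemma~\ref{lem:chik} shows $\chi_{\k.}(\a) \ge 0$, so both sides equal zero. Within $U$ I would split into three substrips along the thresholds $a_1 = 2d_1 - 2$ and $a_2 = 3d_2$: (a) $\{a_1 \le 2d_1-2,\ 2d_2 \le a_2 \le 3d_2-1\}$ where only the summand $\dim R_{(2d_1-a_1-2, -2d_2+a_2)}$ contributes to $n_\d$; (b) $\{2d_1-1 \le a_1 \le 3d_1-2,\ a_2 \ge 3d_2\}$ where only $\dim R_{(3d_1-a_1-2, -3d_2+a_2)}$ contributes; and (c) the overlap $\{a_1 \le 2d_1-2,\ a_2 \ge 3d_2\}$ where both terms are simultaneously active. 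In each substrip the piecewise formula of Lemma~\ref{lem:chik} collapses because most of the factors $(a_i - k d_i + 1)_+$ are forced to be zero by the defining inequalities, and a short polynomial expansion yields the algebraic identity $\chi_{\k.}(\a) + n_\d(\a) = 0$ on substrips (b) and (c). On substrip (a), one reads directly from the definitions that $n_\d(\a) \le 0$ while the table of Lemma~\ref{lem:chik} gives $\chi_{\k.}(\a) \ge 0$, so both $n_\d(\a)_+$ and $\chi_{\k.}(\a)_-$ are zero.

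The main obstacle is substrip (c), where $\chi_{\k.}$ changes sign along the common hyperbola marked ``$+/\mathrm{hiperb}/-$'' in Lemma~\ref{lem:chik}. Here the desired conclusion must be extracted from the polynomial identity $\chi_{\k.}(\a) = -n_\d(\a)$ rather than from a sign comparison, so one must check that all mixed monomials in $a_1, a_2, d_1, d_2$ actually cancel rather than merely confirming a compatible sign pattern. Once the identity is verified in (b) and (c) and the sign inequalities are checked in (a), taking positive and negative parts of $\chi_{\k.}(\a) = -n_\d(\a)$ produces $n_\d(\a) = \chi_{\k.}(\a)_-$ throughout $U$, and the argument in $V$ is identical by the $\Pone \leftrightarrow \Pone$ symmetry, completing the proof.
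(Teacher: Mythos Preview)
Your approach is the same case analysis as the paper's (the paper just says ``there are sixteen domains of polynomiality; here is one, the others are similar''), and your use of the $(s,t)\leftrightarrow(u,v)$ symmetry to reduce to the arm $U$ is a clean way to organise it. There are, however, two genuine omissions.

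The first is minor: two thresholds cut $U$ into four pieces, not three, and you have not mentioned the block $\{2d_1-1 \le a_1 \le 3d_1-2,\ 2d_2 \le a_2 \le 3d_2-1\}$. This is harmless since all four summands of $n_\d(\a)$ vanish there and the table gives $\chi_{\k.}(\a)\ge 0$, but it should be recorded.

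The second is a real gap. The polynomial identity $\chi_{\k.}(\a)+n_\d(\a)=0$ that you assert on substrip (c) is \emph{false} on the portion of (c) with $a_1\le d_1-2$ (nonempty once $d_1\ge 2$). There $\a$ lies in region $A_1$ of Lemma~\ref{lem:chik}, so $\chi_{\k.}(\a)=(a_1+1)(a_2+1)$; writing $P_k=(a_1-kd_1+1)(a_2-kd_2+1)$ and using $P_0-3P_1+3P_2-P_3=0$ together with $n_\d(\a)=3P_2-P_3$ on (c), one gets
\[
\chi_{\k.}(\a)+n_\d(\a)\;=\;P_0+3P_2-P_3\;=\;3P_1\;=\;3(a_1-d_1+1)(a_2-d_2+1),
\]
which is strictly negative for $a_1\le d_1-2$ and $a_2\ge 3d_2$. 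The lemma still holds in this sub-region, but by the sign mechanism you used for substrip (a): $\chi_{\k.}(\a)>0$ directly, and the displayed equality forces $n_\d(\a)<0$, so both $\chi_{\k.}(\a)_-$ and $(n_\d(\a))_+$ vanish. You therefore need to subdivide (c) once more at $a_1=d_1-1$; the identity $\chi_{\k.}=-n_\d$ that drives your hyperbola argument is valid only on the piece $d_1-1\le a_1\le 2d_1-2$.
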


\begin{proof}
By Lemma~\ref{lem:chik} are sixteen domains of polynomiality of $\chi_{\k.}$.
Consider for instance the case $a_1>3d_1-1, 2d_2-1 < a_2 \le 3d_2-1$. Then,
$n_\d(\a)=(3d_1-a_1-1)(3d_2-a_2-1)$, while $\chi_{\k.}(\a)_-=
(a_1+1)(a_2+1)-3(-d_1+a_1+1)(-d_2+a_2+1)+3(-2d_1+a_1+1)(-2d_2+a_2+1)$
and it is a simple computation to check that they coincide.
The other cases are similar.
\end{proof}
By Lemma~\ref{lem:S+} the generating series $T(x,y) = \sum_\a \dim \paren{H_1}_{\a} x^{a_1} y^{a_2}$ is 
a rational function. Hence

\begin{prop}\label{prop:conjs}
Conjectures~\ref{conj:ANH} and~\ref{conj:RS} are equivalent.
\end{prop}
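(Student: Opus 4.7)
The plan is to establish the equivalence at the level of bidegrees: for any fixed $\a \in \Z_{\ge 0}^2$, the equality $\dim(R/I_W)_\a = \chi_{\k.}(\a)_+$ should turn out to be the same statement as $\dim(H_1)_\a = n_\d(\a)$. Once this pointwise equivalence is in hand, the two conjectures describe the same locus in the affine space of coefficients of $\f$, so one holds on a nonempty open set if and only if the other does.

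Concretely, I would start from Lemma~\ref{lem:H0-H1}, which gives
\[
\dim (R/I_W)_\a - \dim (H_1)_\a \,=\, \chi_{\k.}(\a) \,=\, \chi_{\k.}(\a)_+ - \chi_{\k.}(\a)_-,
\]
and rearrange to
\[
\dim (R/I_W)_\a - \chi_{\k.}(\a)_+ \,=\, \dim(H_1)_\a - \chi_{\k.}(\a)_-.
\]
Next, I would apply Lemma~\ref{lem:nd} to replace $\chi_{\k.}(\a)_-$ by $n_\d(\a)$. This gives the clean identity
\[
\dim(R/I_W)_\a - \chi_{\k.}(\a)_+ \,=\, \dim(H_1)_\a - n_\d(\a),
\]
so the vanishing of one side is equivalent to the vanishing of the other, which is exactly the pointwise equivalence we want. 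Both sides are nonnegative: $\dim(H_1)_\a \ge n_\d(\a)$ by Corollary~\ref{cor:H1} together with Lemma~\ref{lem:generic}, and the corresponding inequality $\dim(R/I_W)_\a \ge \chi_{\k.}(\a)_+$ then follows from the displayed identity.

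Finally, I would translate this into the language of open sets of coefficients. By Lemma~\ref{lem:generic}, the condition $\dim(H_1)_\a = n_\d(\a)$ is the maximal rank condition on the linear map $\delta_\a$ of Corollary~\ref{cor:H1}, so it cuts out a Zariski open subset of the coefficient space; the dimension equality in Conjecture~\ref{conj:RS} is open for the same reason (semicontinuity of Hilbert function). The pointwise equivalence above identifies these open loci bidegree by bidegree, so the intersections over all $\a \in \Z_{\ge 0}^2$ coincide as subsets, and the equivalence of the two conjectures follows.

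The only potential subtlety is that the intersection is a priori indexed by infinitely many bidegrees, so some care is needed to argue it remains a reasonable (nonempty) open set; this is where Lemma~\ref{lem:S+} enters, since it ensures that the generating series $T(x,y) = \sum_\a \dim(H_1)_\a x^{a_1} y^{a_2}$ is rational, matching $S(x,y)_-$, and similarly for $R/I_W$ matching $S(x,y)_+$. Rationality, combined with the fact that Conjecture~\ref{conj:ANH} is automatic outside the strips described in Remark~\ref{rem:H1}, reduces the infinite family of conditions to finitely many genuine ones, so this does not pose a real obstacle.
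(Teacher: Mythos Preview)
Your core argument is correct and essentially identical to the paper's: both proofs combine Lemma~\ref{lem:H0-H1} with Lemma~\ref{lem:nd} and the elementary identity $\chi_{\k.}(\a)=\chi_{\k.}(\a)_+-\chi_{\k.}(\a)_-$ to obtain the pointwise equivalence $\dim(R/I_W)_\a=\chi_{\k.}(\a)_+ \iff \dim(H_1)_\a=n_\d(\a)$; the paper simply writes out one direction and declares the converse similar.

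One remark on your final paragraph: the ``potential subtlety'' you raise is not actually needed for the \emph{equivalence}. The pointwise argument already shows that, for any fixed $\f$, the family of equalities in Conjecture~\ref{conj:ANH} holds for all $\a$ if and only if the family in Conjecture~\ref{conj:RS} does; hence the two loci in coefficient space coincide as \emph{sets}, and one is a nonempty open set iff the other is. Whether that common locus is genuinely open and nonempty (i.e., whether the infinite intersection collapses to finitely many conditions) is a concern for proving either conjecture, not for their equivalence. The paper does not address this point either, and indeed invokes Lemma~\ref{lem:S+} only in passing before the proposition.
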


\begin{proof}
Assume $\dim \paren{H_1}_{\a} = n_\d(\a)$. Using
 Lemma~\ref{lem:nd}, we substitute this value in the statement of Lemma~\ref{lem:H0-H1}:
\[\chi_{\k.}(\a) = \chi_{\k.}(\a)_+ - \chi_{\k.}(\a) _- = \dim (R/I_W)_\a  \, - \, \chi_{\k.}(\a)_-,\]
which says that $\dim (R/I_W)_\a = \chi_{\k.}(\a)_+$. The converse is similar.
\end{proof}

\begin{remark}\label{rem:H1bis}
By Remark~\ref{rem:H1}, we have that $\dim \paren{H_1}_{\a} = n_\d(\a)$
 is true for polynomials
$f_i$ satisfying assumption~\eqref{eq:B} except in the ranges $(a_1\ge 3d_1 \text{ and } d_2 \le a_2\le 2 d_2-2)$
and  $(a_2\ge 3d_2 \text{ and } d_1 \le a_1\le 2 d_1-2)$. So, we deduce from the proof of Proposition~\ref{prop:conjs} that Conjecture~\ref{conj:RS} is true outside these ranges.
\end{remark}

\noindent We end this section with an easy corollary.

\begin{cor}
If Conjectures~\ref{conj:ANH} and~\ref{conj:RS} hold, then for any $\f$ regular 
either ${R/I_W}_\a=0$ or ${H_1}_{\a}=0$  for any $\a \in {\mathbb Z}_{\ge 0}^2$. 
\end{cor}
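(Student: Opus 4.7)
The plan is to chain together the two conjectural equalities with Lemma~\ref{lem:nd} and the elementary observation that, for any real number $c$, the positive and negative parts $c_+$ and $c_-$ cannot both be strictly positive. So the proof is essentially a one-line bookkeeping exercise, and the only substantive point is checking that both conjectures can be invoked simultaneously at the same $\f$.

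First, for a generic choice of $\f$ satisfying~\eqref{eq:B}, Conjecture~\ref{conj:RS} asserts $\dim (R/I_W)_\a = \chi_{\k.}(\a)_+$ for every $\a \in \ZZ^2_{\ge 0}$. Second, Conjecture~\ref{conj:ANH} combined with Lemma~\ref{lem:generic} gives $\dim (H_1)_\a = n_\d(\a)$, and Lemma~\ref{lem:nd} identifies $n_\d(\a) = \chi_{\k.}(\a)_-$. Thus for such $\f$ we have simultaneously
\[
\dim (R/I_W)_\a \, = \, \chi_{\k.}(\a)_+, \qquad \dim (H_1)_\a \, = \, \chi_{\k.}(\a)_-.
\]

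Now apply the elementary fact recalled in~\eqref{eq:c}: at every $\a$ at most one of $\chi_{\k.}(\a)_+$ and $\chi_{\k.}(\a)_-$ is nonzero, since only one of the two numbers can be positive. Hence at most one of $\dim (R/I_W)_\a$ and $\dim (H_1)_\a$ can be nonzero, which is the statement.

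The only mild subtlety is that each conjecture provides a nonempty open set in coefficient space on which its conclusion holds; since both sets are open and nonempty in the same irreducible ambient space of triples satisfying~\eqref{eq:B}, their intersection is again a nonempty open set, which is the locus of $\f$ over which the corollary applies. There is no genuine obstacle in the argument.
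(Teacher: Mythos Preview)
Your argument is correct and matches the paper's own proof almost verbatim: both deduce $\dim(R/I_W)_\a=\chi_{\k.}(\a)_+$ and $\dim(H_1)_\a=\chi_{\k.}(\a)_-$ from the two conjectures (via Lemma~\ref{lem:nd}) and then invoke the remark after~\eqref{eq:c} that at most one of $c_+,c_-$ is nonzero. Your added sentence about intersecting the two nonempty open loci is a reasonable clarification but not strictly needed, since the corollary is stated under the hypothesis that both conjectures hold.
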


\begin{proof}
If the conjectures are valid for any $\f$ regular, we have for any bidegree $\a$ that $\dim(R/I_W)_\a=\chi_{\k.}(\a)_+$ and $\dim \paren{H_1}_{\a}=\chi_{\k.}(\a)_-$, and as we remarked after~\eqref{eq:c}, at most one of these numbers can be nonzero.
\end{proof}


\section{Koszul homology $H_1(\k.(\f,R))$ for ${\bf d}=(1,n)$}\label{sec:1n}
From now on, we specialize our study to bidegrees of the form ${\bf d}=(1,n)$, always assuming that $W$ is basepoint free. 
This case is both the natural sequel to the study of the $(1,2)$ case studied in \cite{cds}, as well as a key ingredient for better understanding the general
case. It splits the analysis into separate parts, in a way that we make precise below. Theorem~\ref{th:KoszulM}  relates the Betti numbers $\beta_{1, {\bf a}}$ with the Koszul homology of $H_1$ with respect to the sequence $\{s,t,u,v\}$,  for any bidegree $\d$.

For degree $(1,n)$, Corollary~\ref{cor:H1} yields the following description of  $\paren{H_1}_{(a_1,a_2)}$:
\[
 \paren{H_1}_{(a_1,a_2)} \cong \ker\paren{ 
 \begin{matrix}
                                        R_{(1-a_1,-3n+a_2)}\\ \oplus \\
                                        R_{(-3+a_1,3n-a_2-2)}
                                       \end{matrix}
                                       \stackrel{\delta_{(a_1,a_2)}}{\lto} \paren{ \begin{matrix}
                                       R_{(-a_1,-2n+a_2)}\\ \oplus \\
                                        R_{(-2+a_1,2n-a_2-2)}
                                        \end{matrix}}^3}.
\]
 The table given in Section \ref{sec:generalcase} reduces to
\[\begin{array}{c||c|c|c|c}
\left[3n,+\infty \right)			&R_{(1-a_1,-3n+a_2)}& 0& 0\\  \hline
3n-1		&0&0& 0 \\  \hline
\left(2n-2,3n-2\right]		&0&0& R_{(-3+a_1,3n-a_2-2)} \\\hline
\left(-\infty,2n-2\right]		&0&0& \ker(\phi_1) \\ \hline\hline
		&1 &2	& \left[3,+\infty \right)  
\end{array},
\]
with 
\[
\phi_1:R_{(-3+a_1,3n-a_2-2)}\to R_{(-2+a_1,2n-a_2-2)}^3.
\]
So the region where interesting behavior occurs is in multidegree $(a_1,a_2)$, with 
\[
\begin{array}{ccc}
a_2 \ge 3n & \mbox{ and } & a_1 =1\\
           & \mbox{ or } &               \\
a_1 \ge 3 & \mbox{ and }&  a_2 \le 2n-2
\end{array}
\]
Since we need $I_W$ to be nonzero, we have 
the constraint that  $ a_1 \ge d_1, a_2 \ge d_2$, so for ${\bf d} = (1,n)$, the only region of interest is bidegree $(a_1,a_2)$, with 
\[
a_1 \ge 3 \quad \mbox{ and } \quad 2n-2 \ge a_2 \ge n,
\]
corresponding to $\ker(\phi_1)$ defined in \ref{sec:generalcase}. We study $n \ge 3$; $n=2$ is  analyzed in \cite{cds}. 
\subsection{Tautological first syzygies: degrees $(1,*)$ and $(2,*)$}\label{TautSyz}
\begin{lemma}\label{Alicia}
There is a unique minimal first syzygy on $I_W$ in bidegree $(1,3n)$.
\end{lemma}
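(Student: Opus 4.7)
My strategy is to apply Corollary~\ref{cor:H1} at the two bidegrees $(1,3n)$ and $(1,3n-1)$ and then use bidegree bookkeeping to translate the Koszul homology computation into a count of minimal generators of $\Syz(\f)$.

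First I would compute $(H_1)_{(1,3n)}$ via Corollary~\ref{cor:H1} with $\d=(1,n)$. Substituting $(a_1,a_2)=(1,3n)$, the source $R_{(0,0)}\oplus R_{(-2,-2)}$ reduces to $\K$, while the target $(R_{(-1,n)}\oplus R_{(-1,-n-2)})^3$ vanishes identically, so $(H_1)_{(1,3n)}\cong\K$. The Koszul syzygies arising from $\delta_2\colon R(-2\d)^3\to R(-\d)^3$ in~\eqref{eqKoszul1} contribute nothing in bidegree $(1,3n)$, since any such contribution would require elements of $R_{(-1,n)}=0$. Hence $\Syz(\f)_{(1,3n)}\cong (H_1)_{(1,3n)}\cong\K$, a single copy.

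Next I would upgrade this one-dimensionality to a minimality statement by ruling out contributions to bidegree $(1,3n)$ coming from $\mm\cdot \Syz(\f)$. The only possible sources are $\{s,t\}\cdot \Syz(\f)_{(0,3n)}$ and $\{u,v\}\cdot \Syz(\f)_{(1,3n-1)}$. The first vanishes trivially: a syzygy of bidegree $(0,3n)$ requires coefficients in $R_{(-1,2n)}=0$. For the second I would apply Corollary~\ref{cor:H1} at $\a=(1,3n-1)$: both summands $R_{(0,-1)}$ and $R_{(-2,-3)}$ of the source vanish, so $(H_1)_{(1,3n-1)}=0$, and once again Koszul syzygies cannot contribute here. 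Thus $\Syz(\f)_{(1,3n-1)}=0$, and the one-dimensional space $\Syz(\f)_{(1,3n)}$ must consist entirely of a minimal generator.

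For concreteness I would also indicate the explicit form of this syzygy, foreshadowed in Example~\ref{11case}: writing $f_i=s\alpha_i(u,v)+t\beta_i(u,v)$ with $\alpha_i,\beta_i\in\K[u,v]_n$, a triple $(g_0,g_1,g_2)$ with $g_i\in\K[u,v]_{2n}$ is a syzygy precisely when it lies in the kernel of the $2\times 3$ matrix with rows $(\alpha_0,\alpha_1,\alpha_2)$ and $(\beta_0,\beta_1,\beta_2)$, and the unique generator is the triple of signed $2\times 2$ minors of this matrix. The basepoint-free assumption~\eqref{eq:B} ensures these minors have no common zero on $\Pone$, giving an honest nondegenerate syzygy and confirming independently that the rank-one kernel of the matrix first appears in $\K[u,v]$-degree $2n$. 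I do not anticipate any serious obstacle: once Corollary~\ref{cor:H1} is in hand the whole argument is bidegree bookkeeping against the machinery of Section~\ref{sec:d}.
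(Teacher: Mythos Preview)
Your argument is correct and largely parallels the paper's own proof. Both compute $(H_1)_{(1,3n)}\cong\K$ via Corollary~\ref{cor:H1} (the paper phrases this as $\phi_2\colon\K\to 0$, so $\ker\phi_2\cong\K$) and exhibit the explicit generator as the signed $2\times2$ minors of the matrix with rows $(\alpha_i)$ and $(\beta_i)$.

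The only real difference is in the minimality step. Rather than re-applying Corollary~\ref{cor:H1} at bidegree $(1,3n-1)$, the paper argues sheaf-theoretically on $\Pone$: any bidegree-$(1,m)$ syzygy lies in the kernel of the $2\times3$ matrix $\theta$ with rows $(\alpha_i),(\beta_i)$, and basepoint freeness forces $\coker\theta=0$, whence a Chern class count gives $\ker\theta\cong\OPone(-3n)$, ruling out all $m<3n$ at once. Your direct bookkeeping at $(1,3n-1)$ is more economical and suffices for the lemma; the paper's argument yields the slightly stronger statement $\Syz(\f)_{(1,m)}=0$ for every $m<3n$. Your final ``concreteness'' paragraph in fact recovers the paper's $\ker\theta$ argument in algebraic dress, so the two proofs converge. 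One harmless typo: at $(1,3n-1)$ the second source summand is $R_{(-2,-1)}$, not $R_{(-2,-3)}$; both are zero.
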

\begin{proof}
Because $\phi_2: \K \to 0$, $\ker(\phi_2) \simeq \K$, and we can describe the syzygy explicitly as follows (it is of the type appearing in Lemma 6.1 of \cite{cds}.)
Write 
\[
\begin{array}{ccc}
f_0 &=& s \cdot p_0+t\cdot q_0\\
f_1 &=& s \cdot p_1+t \cdot q_1\\
f_2 &=& s \cdot p_2 +t\cdot q_2,
\end{array}
\]
with the $p_i ,q_i\in k[u,v]_n$. Then 
\[
\det \left[ \!
\begin{array}{ccc}
f_0 & p_0 &q_0\\
f_1& p_1 &q_1\\
f_2 & p_2 &q_2 
\end{array}\! \right] =0, 
\]
so the $2 \times 2$ minors in $q_i$ and $p_i$ give a syzygy with entries of bidegree 
$(0,2n)$, hence of bidegree $(1,3n)$ on $I_W$.  It is minimal since any 
syzygy of lower degree would be of the form $(0,d)$ with $d <2n$. This 
would force $W$ to have basepoints; to see this note that a syzygy 
$(s_0,s_1,s_2)$ of bidegree $(0,d)$ must be in the kernel of the map 
\[
\theta = \left[ \!
\begin{array}{ccc}
p_0 &p_1 &p_2\\
q_0& q_1 &q_2
\end{array}\! \right],
\] 
by splitting out the $s$ and $t$ components. But $\theta$ gives a map 
$\Oc_{\P^1}^3(-n) \rightarrow \Oc_{\P^1}^2$ with zero cokernel, because the 
rank of $\theta$ drops on the locus of the $2 \times 2$ minors of 
$\theta$; such a point would be a basepoint of $W$. A Chern class 
computation shows $\ker(\theta) \simeq \OPone(-3n)$; in fact it 
consists of the $2 \times 2$ minors of $\theta$. 
\end{proof}

\begin{prop}\label{prop3.2}
The syzygy of Lemma~\ref{Alicia} and the three Koszul syzygies
generate a pair of minimal second syzygies of bidegree $(2,3n)$. 
Furthermore, there is a minimal third syzygy of bidegree $(3,3n)$. 
\end{prop}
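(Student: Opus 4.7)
The plan is to exploit Lemma~\ref{Alicia}'s decomposition $f_i = sp_i + tq_i$, where the Alicia syzygy is $\sigma = (m_{12},-m_{02},m_{01})$ with $m_{jk}:=p_jq_k-p_kq_j$. Direct expansion yields the identities $p_jf_k - p_kf_j = t\,m_{jk}$ and $q_jf_k - q_kf_j = -s\,m_{jk}$, from which a component-by-component check gives
\[
p_2K_{01} - p_1K_{02} + p_0K_{12} \,=\, t\,\sigma, \qquad q_2K_{01} - q_1K_{02} + q_0K_{12} \,=\, -s\,\sigma.
\]
Hence the vectors, written as coefficients on $(K_{01},K_{02},K_{12},\sigma)$,
\[
\eta_1 := (p_2,-p_1,p_0,-t), \qquad \eta_2 := (q_2,-q_1,q_0,s)
\]
are second syzygies of bidegree $(2,3n)$. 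Every entry of $\eta_1,\eta_2$ lies in the maximal ideal $\mathfrak m$, which gives minimality; $\K$-linear independence follows at once from the $\sigma$-coordinates $-t,s\in R_{(1,0)}$, since any relation $\lambda\eta_1+\mu\eta_2=0$ over $\K$ yields $-\lambda t + \mu s = 0$ and forces $\lambda=\mu=0$.

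For the minimal third syzygy of bidegree $(3,3n)$, the starting point is the Koszul relation among the three Koszul syzygies, $f_0K_{12} - f_1K_{02} + f_2K_{01} = 0$, which exhibits the second syzygy $\kappa := (f_2,-f_1,f_0,0)$ of bidegree $(3,3n)$. Substituting $f_i = sp_i + tq_i$ one verifies directly
\[
s\,\eta_1 + t\,\eta_2 \,=\, (sp_2+tq_2,\, -sp_1-tq_1,\, sp_0+tq_0,\, 0) \,=\, \kappa.
\]
The coefficient vector $(s,t,-1)$ is then the candidate minimal third syzygy at bidegree $(3,3n)$, expressing the relation between the pair $\eta_1,\eta_2$ constructed above and $\kappa$ in the ambient free module.

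The main obstacle is confirming that $(s,t,-1)$ descends to a genuinely minimal generator of the third syzygy module in the minimal free resolution of $I_W$. Since $\kappa = s\eta_1 + t\eta_2$ lies in $\mathfrak m\cdot(R\eta_1+R\eta_2)$, $\kappa$ is not itself a minimal second syzygy relative to $\eta_1,\eta_2$; the minimality argument must therefore account for the additional minimal second syzygies arising from first syzygies in the bidegrees $(a_1,a_2)$ with $a_1\geq 3$ and $n\leq a_2\leq 2n-2$ given by the nonvanishing $(H_1)_{(a_1,a_2)}$ in Corollary~\ref{cor:H1}. A Hilbert-series comparison via Lemma~\ref{lem:H0-H1}, together with the exact sequence of Corollary~\ref{corExactSeqRectangle}, should provide the required bookkeeping to show that $(s,t,-1)$, modulo these additional generators, yields the claimed minimal third syzygy of bidegree $(3,3n)$.
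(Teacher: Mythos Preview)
Your construction is essentially identical to the paper's: writing $f_i=sp_i+tq_i$, the paper assembles the Alicia syzygy and the three Koszul syzygies into a $3\times 4$ matrix $A$ and exhibits a $4\times 3$ matrix $A'$ whose columns lie in $\ker A$. Up to a sign and the ordering of the four first syzygies, your $\eta_2$ and $-\eta_1$ are the first two columns of $A'$, your $\kappa$ is its third column, and your relation $(s,t,-1)$ is the paper's $[t,-s,1]\in\ker A'$.

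The one place you diverge is in worrying about minimality of the third syzygy. The paper does not carry out the Hilbert-series bookkeeping you propose; it simply observes that the second Koszul syzygy $\kappa$ is non-minimal (it lies in the span of the $(3,*)$ first syzygies of Theorem~\ref{1syz3m}) and records the relation $[t,-s,1]$ among the columns of $A'$ as the third syzygy. In other words, the paper treats the proposition as a construction rather than a full minimality proof, so your extra caution in the final paragraph goes beyond what the paper actually does.
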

\begin{proof}
Let
\[
\begin{array}{ccc}
f_0 &=& s \cdot p_0+t\cdot q_0\\
f_1 &=& s \cdot p_1+t \cdot q_1\\
f_2 &=& s \cdot p_2 +t\cdot q_2,
\end{array}
\]
with the $a_i ,b_i\in k[u,v]_n$, and consider the
submatrix $A$ of $\partial_1$ generated by the syzygy of Lemma~\ref{Alicia}
and the three Koszul syzygies:
\[
\left[ \!
\begin{array}{ccccc}
q_1p_2-p_1q_2    &t \cdot q_1+s \cdot p_1   &  t \cdot q_2 +s\cdot p_2    & 0                       \\
p_0q_2-q_0p_2    &-(t \cdot q_0+s\cdot p_0)  & 0                          &  t \cdot q_2 +s\cdot p_2 \\
q_0p_1-p_0q_1    & 0                         &-(t \cdot q_0+s\cdot p_0)     & -(t \cdot q_1+s \cdot p_1) 
\end{array}\! \right] 
\]
The columns of the matrix $A'$
\[
\left[ \!
\begin{array}{ccc}
s    &t  & 0\\
q_2 &-p_2  & f_2 \\
-q_1  &p_1  & -f_1  \\
q_0 & -p_0  & f_0  
 \end{array}\! \right] 
\]
are in the kernel of $A$; the rightmost column is the second Koszul
syzygy on $I_W$. As $[t,-s,1]$ is in the kernel of $A'$, we see that
there is a third syzygy of bidegree $(3,3n)$. Note that by
Theorem~\ref{1syz3m} the second Koszul syzygy is not minimal, but can
be represented in terms of the syzygies appearing in
Theorem~\ref{1syz3m}.
\end{proof}

The results of Section \ref{sec:generalcase} show that there are no minimal first syzygies in bidegree $(2, *)$ except for 
the Koszul syzygies in degree $(2,2n)$.  This can be seen explicitly, as follows. First, a syzygy with entries of bidegree $(1,m)$ satisfies
\[
(sg_0 +th_0)f_0+(sg_1 +th_1)f_1+(sg_2 +th_2)f_2=0,
\]
with the $f_i$ as in the previous lemma. 
Note that $\langle p_0,p_1,p_2\rangle$ and $\langle q_0,q_1,q_2\rangle$
are both basepoint free on $\P^1$; for otherwise vanishing of 
$\{ p_0,p_1,p_2, t \}$ would give a basepoint on $\PP$ and also for
for  $\{ q_0,q_1,q_2, s \}$. If $(u_0:v_0) \in \P^1$ is a point where the 
rank of 
\[
\left[ \!
\begin{array}{ccc}
p_0 & p_1  &p_2\\
q_0 & q_1  &q_2
\end{array}\! \right]
\]
is one, then $s=u_0,t=v_0$ is a basepoint of $I_W$. 
Using that $f_i=sp_i+tq_{i}$,  multiplying out and collecting
the coefficients of the $\{s^2, st, t^2\}$ terms shows that
$[g_0,g_1,g_2,h_0,h_1,h_2]$ is in the kernel of the matrix
\[
M=\left[ \!
\begin{array}{cccccc}
p_0 & p_1  &p_2 &0     &0      &0\\
q_0 & q_1  &q_2  &p_0 & p_1 &p_2 \\
0     & 0     &0    & q_0 & q_1 &q_2
\end{array}\! \right].
\]
The remarks above show that the kernel is free of rank three, with
first Chern class $6n$. The matrix $K$ below satisfies these properties
and clearly $MK=0$:
\[
\left[ \!
\begin{array}{ccc}
-p_1 & 0  &p_2\\
p_0 & -p_2 & 0 \\
0     & p_1 &p_0\\
-q_1  &0    &-q_2\\
q_0   &-q_2 & 0 \\
0    & q_1  &q_0
\end{array}\! \right].
\]
By the Buchsbaum-Eisenbud criterion, $K = \ker(M)$. But $K$ consists
of exactly the Koszul syzygies. From this it follows that the lowest possible nonzero multidegree in the degree $(1,0)$ 
variables for a non-tautological first syzygy is $(3,m+n) = (2, m) + (1,n)$, with $m \ge 0$.
We tackle this next.


\subsection{First syzygies of degree $(3,*)$}
The next theorem gives a complete description of the first syzygies with entries of degree $(2,m)$, hence which are of total degree $(3,m+n)$.
\begin{thm}\label{threestarsyz}
    For the first Betti numbers,
    \[
      \beta_{1,(3,*)} \in \{1, \ldots,5\}
    \]
 and all possible values between one and five occur.
  \end{thm}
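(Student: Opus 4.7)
My plan is to analyze the $\K[u,v]$-module $M := \bigoplus_{a_2} (H_1)_{(3, a_2)}$ and identify minimal first syzygies at bidegrees $(3, a_2)$ with its minimal $\K[u,v]$-generators. Multiplication by $s$ or $t$ shifts the first coordinate of bidegrees and so takes the $a_1=3$ slice of $H_1$ out of itself; meanwhile the Koszul syzygies live only at bidegree $(2, 2n)$, and all their multiples at $(3,\cdot)$ lie in $\mm\,\Syz(\f)$. Consequently, for every $a_2$, $\beta_{1,(3,a_2)}$ equals the number of minimal $\K[u,v]$-generators of $M$ at degree $a_2$.

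Applying Corollary~\ref{cor:H1} with $\d=(1,n)$ and $a_1=3$, I obtain $M_{a_2}=\ker(\phi_1)$ for $a_2\in[n,2n-2]$ and $M_{a_2}\cong R_{(0, 3n-a_2-2)}$ for $a_2\in[2n-1, 3n-2]$; the latter range assembles canonically into the truncation of the dualizing module $\omega^*_{R_2}$ to degrees in $[-(n+1),-2]$, a module generated at its bottom piece of dimension $n$ in degree $a_2=2n-1$. For the upper bound, I track the dimensions $d_{a_2}:=\dim M_{a_2}$, whose generic values by Lemma~\ref{lem:generic} equal $(5a_2-9n+5)_+$ in the kernel range and $3n-a_2-1$ in the top range, increasing by $5$ at each step of the kernel range. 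Letting $a_2^*$ be the smallest $a_2$ with $d_{a_2^*}>0$, the elementary arithmetic $a_2^* -1\le (9n-5)/5$ forces $d_{a_2^*}\le 5$. Since $u,v$ act generically injectively on $M$ and the target dimension satisfies $d_{a_2+1}=d_{a_2}+5$ in the kernel range, the number of new minimal generators at degree $a_2+1$ is $(d_{a_2+1}-2d_{a_2})_+ = (5-d_{a_2})_+$, which sums to $5$ concentrated in at most two successive degrees and vanishes beyond. This yields both the individual bound $\beta_{1,(3,a_2)}\le 5$ and the total bound $\sum_{a_2}\beta_{1,(3,a_2)}\le 5$.

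For the lower bound, $M_{3n-2}=\K$ is always nonzero, so at least one minimal generator exists. Every value in $\{1,\ldots,5\}$ is realized by varying $n$: generic $W$ with $n=3,4,5$ produces $3,4,5$ respectively, while for generic $W$ with $n\ge 6$ the Betti numbers split as the pair $\{d_{a_2^*},\, 5-d_{a_2^*}\}$, with $d_{a_2^*}$ cycling through $\{1,2,3,4,5\}$ as $n \bmod 5$ varies, so every value in $\{1,\ldots,5\}$ appears as an individual $\beta_{1,(3,a_2)}$. Further non-generic examples arise by specializing to factorizable $\f$ as in Example~\ref{ex:maps6}.

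The main obstacle is establishing the uniform upper bound $\le 5$ for non-generic basepoint-free $W$: excess kernel dimensions appearing at lower $a_2$ must be shown to be absorbed by correspondingly larger images of $u,v$-multiplication at higher $a_2$, so that the generator count remains bounded regardless of $W$. This requires combining the numerical identity $5a_2^* - 9n + 5\le 5$ with a careful injectivity/rank analysis of $u,v$ acting on $\ker(\phi_1)$ that invokes the basepoint-free hypothesis essentially (ruling out pathological kernels that miss $\ker u$ or $\ker v$).
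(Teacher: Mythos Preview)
Your identification of $\beta_{1,(3,a_2)}$ with the number of minimal $\K[u,v]$-generators of $M=\bigoplus_{a_2}(H_1)_{(3,a_2)}$ in degree $a_2$ is correct and useful: since every syzygy at $a_1=2$ is Koszul, the Koszul image $B_{(3,\cdot)}$ sits inside $(u,v)\Syz(\f)_{(3,\cdot)}$ and the two notions of minimal generator agree. The gap is in your upper bound. The count
\[
(\text{new generators at }a_2+1)=(d_{a_2+1}-2d_{a_2})_+
\]
is only a \emph{lower} bound: it comes from the inequality $\dim(uM_{a_2}+vM_{a_2})\le 2d_{a_2}$, and equality requires $uM_{a_2}\cap vM_{a_2}=0$, which you do not establish. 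Worse, once $d_{a_2}\ge 5$ (which happens immediately after $a_2^*$), your formula returns $0$, but what you actually need there is that $(u,v)$-multiplication is \emph{surjective} onto $M_{a_2+1}$---exactly the statement that no further generators appear. Nothing in your argument forces this; your dimension arithmetic shows the total is \emph{at least} $5$ in the generic case, not at most $5$. You recognize that the non-generic case is the ``main obstacle'' but leave it open, so the theorem is not proved.

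The paper takes an entirely different route that avoids this difficulty. It works with $N=\bigoplus_{a_2}\Syz(\f)_{(3,a_2)}$ directly (not with $H_1$). Writing a syzygy with entries of bidegree $(2,m)$ and collecting coefficients of $s^3,s^2t,st^2,t^3$ expresses $N$ as the kernel of an explicit $4\times 9$ matrix $M$ with entries the six forms $q_0,\ldots,q_5\in\K[u,v]_n$ defined by $f_i=sq_i+tq_{i+3}$. The basepoint-free hypothesis makes the sheaf map $\Oc_{\P^1}^9(-n)\stackrel{M}{\to}\Oc_{\P^1}^4$ surjective, so $\ker M$ is a rank-$5$ bundle on $\P^1$, hence (by splitting on $\P^1$) a free $\K[u,v]$-module of rank $5$. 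That gives the bound $\le 5$ for \emph{every} basepoint-free $W$, not just generic ones. The paper then identifies the five generators explicitly with columns built from the Hilbert--Burch matrix of $\langle q_0,\ldots,q_5\rangle$. The smaller values $1,2,3,4$ occur exactly when $\dim\Span\{q_i\}<6$: the geometric cases $W\cap\Sigma_{1,n}$ a smooth conic (one syzygy, Theorem~\ref{smoothconicres}) and three noncollinear points (two syzygies, Theorem~\ref{3pointintersection}) handle spans of dimension $2$ and $3$, and the same Hilbert--Burch mechanism with smaller matrices covers $4$ and $5$. The ``$9-4=5$'' rank computation is what replaces your delicate surjectivity analysis, and it works uniformly.
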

    \begin{proof}
Theorems~\ref{smoothconic}, \ref{smoothconicres}, 
\ref{3pointintersection} treat the case where $W$ meets the Segre
variety $\Sigma_{1,n}$ in a smooth conic $C$ or 3 noncollinear
points $Z$. In these situations we may choose a basis so $W = \Span\{g_0h_0, g_1h_1,g_2h_2\}$ with $g_i$
degree $(1,0)$ and $h_i$ degree $(0,n)$. Theorems~\ref{smoothconicres}
and \ref{3pointintersection} give explicit resolutions for $I_W$ in
these cases. 
\begin{itemize}
  \item When $W\cap \Sigma_{1,n}=C$ there is a single syzygy of 
    bidegree $(3,n)$.
    \item $W\cap \Sigma_{1,n}=Z$ there are two syzygies of bidegrees
      $(3,n+\mu), (3,2n-\mu)$.
      \item The remaining cases are covered in Theorem~\ref{1syz3m} below.
      \end{itemize}
    \end{proof}
    
\begin{thm}\label{1syz3m}
Suppose $\{f_0,f_1,f_2\}=(s q_0 +t q_3, s q_1 +tq_4, s q_2
+tq_5)$,
with the $q_i$ linearly independent $($so $n\geq 5).$
Then there are exactly five minimal first syzygies whose entries are 
quadratic in $\{s,t\}$, obtained from the Hilbert-Burch 
matrix $($\cite{GeomSyz}, Theorem 3.2$)$ $N$ for the ideal $Q=\langle q_0,\ldots, q_5\rangle$. If the columns of $N$ have degrees $\{b_1,\ldots, b_5\}$,
then the syzygies on $I_W$ are of degree $\{(3,2n-b_1),\ldots, (3,2n-b_5)\}$.
\end{thm}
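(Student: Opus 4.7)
The plan is to apply Corollary~\ref{cor:H1} to reduce the enumeration of minimal first syzygies of bidegree $(3,\ast)$ to a Hilbert--Burch computation for the ideal $Q=\langle q_0,\ldots,q_5\rangle\subset \K[u,v]$. First, Corollary~\ref{cor:H1} in the range $a_1=3$, $n\le a_2\le 2n-2$ gives $(H_1)_{(3,a_2)}\cong\ker(\phi_1)$, where $\phi_1$ is multiplication by $(f_0,f_1,f_2)$ on the $\bb$-piece $H^2_\bb(R(-3\d))\cong \K[s,t]\otimes\omega^\ast_{\K[u,v]}$. In bidegree $(3,\ast)$ the $\K[s,t]$-factor is one-dimensional, so for $\xi\in(\omega^\ast_{\K[u,v]})_{a_2-3n}$ the equation $(sq_i+tq_{i+3})\xi=0$ splits, by linear independence of $s$ and $t$, into $q_i\xi=0$ and $q_{i+3}\xi=0$. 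Hence $(H_1)_{(3,a_2)}\cong \mathrm{Ann}_{\omega^\ast_{\K[u,v]}}(Q)_{a_2-3n}$.

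Since $W$ is basepoint-free, $Q$ is $(u,v)$-primary; and since $q_0,\ldots,q_5$ are linearly independent, no syzygy of $Q$ has degree $0$. The Hilbert--Burch theorem then gives a minimal free resolution
\[
0\to \bigoplus_{j=1}^{5}\K[u,v](-n-b_j)\xrightarrow{N}\K[u,v](-n)^6\to \K[u,v]\to \K[u,v]/Q\to 0
\]
with $b_j\ge 1$ for every $j$. By local duality, $\mathrm{Ann}_{\omega^\ast}(Q)$ is identified, up to a grading shift of~$2$, with the canonical module $\omega_M$ of $M:=\K[u,v]/Q$. Dualizing the Hilbert--Burch resolution by $\mathrm{Hom}(-,\omega_{\K[u,v]})$ presents $\omega_M$ as the cokernel of $N^T\colon \omega_{\K[u,v]}(n)^6\to \bigoplus_{j=1}^{5}\omega_{\K[u,v]}(n+b_j)$. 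Because all $b_j\ge 1$, every entry of $N$ lies in $(u,v)$, so the five free generators of the target descend to a minimal generating set of $\omega_M$, placed in degrees $2-n-b_j$.

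Carrying the shift by~$2$ and the translation between internal degree in $\mathrm{Ann}_{\omega^\ast}(Q)$ and the second coordinate of the bidegree of $H_1$ back to the statement, the five minimal generators of $(H_1)_{(3,\ast)}$ as an $R$-module land exactly in bidegrees $(3,2n-b_j)$ for $j=1,\ldots,5$. Invoking Theorem~\ref{th:KoszulM}, which expresses $\beta_{1,\mathbf{a}}$ in terms of the Koszul homology of $H_1$ with respect to $\{s,t,u,v\}$, these minimal generators of $H_1$ correspond precisely to the claimed five minimal non-Koszul first syzygies of $I_W$. The main delicate point is the accurate bookkeeping of the successive degree shifts (the $-3\d$ twist inside $H^2_\bb$, the Matlis duality identification, and the degrees produced by Hilbert--Burch); once these are correctly aligned, both the count of five and the specific bidegrees $(3,2n-b_j)$ are forced by the structure of the matrix~$N$.
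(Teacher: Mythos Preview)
Your proof is correct and takes a genuinely different route from the paper's. The paper argues directly and explicitly: it writes a bidegree-$(2,m)$ syzygy as a vector in the kernel of a concrete $4\times 9$ matrix $M$ over $\K[u,v]$ (obtained by collecting coefficients of $s^3,s^2t,st^2,t^3$), observes that the basepoint-free hypothesis makes the sheaf map $\Oc_{\P^1}^9(-n)\stackrel{M}{\to}\Oc_{\P^1}^4$ surjective, so $\ker M$ is locally free of rank~$5$ with first Chern class $9n$, and then \emph{explicitly} exhibits a $5\times 9$ matrix $K$, built from signed $4\times 4$ minors of the Hilbert--Burch matrix $N$, with $M\cdot K^t=0$; exactness follows from Buchsbaum--Eisenbud. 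Your argument instead works through Corollary~\ref{cor:H1} and local duality: you identify $(H_1)_{(3,\ast)}$ with $\mathrm{Ann}_{\omega^\ast_{\K[u,v]}}(Q)$, then with $\omega_{\K[u,v]/Q}$ (up to the shift), and read off the five minimal generators by dualizing the Hilbert--Burch resolution.

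What each approach buys: the paper's method is elementary and yields closed formulas for the five syzygies themselves in terms of the minors $n^{ij}_k$, which is valuable for the explicit resolutions in \S4--5. Your method is more conceptual and explains structurally why the Hilbert--Burch matrix of $Q$ governs the answer, via Matlis duality; it also makes the degree count $(3,2n-b_j)$ transparent once the shifts are tracked. One point you should make more explicit: your appeal to Theorem~\ref{th:KoszulM} really uses the observation (stated in its proof) that $\beta_{1,\mathbf a}=\dim(H_1/\mm H_1)_{\mathbf a}$, together with the vanishing $(H_1)_{(2,\ast)}=0$ from the table in \S\ref{sec:1n}, so that minimal $\K[u,v]$-generators of $(H_1)_{(3,\ast)}$ coincide with minimal $R$-generators of $H_1$ in bidegree $(3,\ast)$; and that since $b_j\ge 1$ forces $2n-b_j<2n$, no Koszul syzygy (which enters only at bidegree $(3,\ge 2n)$ after multiplication by $s,t$) interferes. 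With those two sentences added, your argument is complete.
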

\begin{proof} 
A syzygy with entries of bidegree $(2,m)$ satisifies
\[
(s^2a_0 +sta_1+t^2a_2)f_0+(s^2b_0 +stb_1+t^2b_2)f_1+(s^2c_0
+stc_1+t^2c_2)f_2=0,
\]
so using that $f_i=sq_i+tq_{i+3}$, multiplying out and collecting
the coefficients of the $\{s^3, s^2t, st^2,t^3\}$ terms shows that
$[a_0,b_0,c_0,a_1,b_1,c_1,a_2,b_2,c_2]$ is in the kernel of the matrix
\[
M=\left[ \!
\begin{array}{ccccccccc}
q_0 & q_1  &q_2 &0     &0      &0  &0  &0  &0\\
q_3 & q_4  &q_5  &q_0 & q_1 &q_2  &0  &0  &0\\
0     & 0     &0    & q_3 & q_4 &q_5 &q_0 & q_1 &q_2 \\
0      &0     &0     &0     & 0     &0    & q_3 & q_4 &q_5 
\end{array}\! \right]. 
\]
Since $W$ is basepoint free, it follows that as a sheaf, the
cokernel of 
\[
\Oc^9_{\P^1}(-n) \stackrel{M}{\longrightarrow} \Oc^4_{\P^1}
\]
is zero, hence the kernel of $M$ is a rank five free module 
with first Chern class $9n$. If $N$ denotes the Hilbert-Burch 
matrix of $Q$ then $N$ is a $6 \times 5$ matrix whose maximal minors are $Q$. Write
$n^{ij}_{k}$ for the determinant of the submatrix of $N$
obtained by omitting rows $i,j$ and column $k$ (convention-indexing
starts with 0), and consider the matrix $K$
\[
\left[ \!
\begin{array}{ccccccccc}
-n^{12}_0 & -n^{02}_0  &-n^{01}_0 &n^{15}_0 -n^{24}_0     &n^{05}_0 -n^{23}_0      &n^{04}_0 -n^{13}_0  &-n^{45}_0  &-n^{35}_0  &n^{34}_0\\
-n^{12}_1 & -n^{02}_1  &-n^{01}_1 &n^{15}_1 -n^{24}_1     &n^{05}_1 -n^{23}_1      &n^{04}_1 -n^{13}_1  &-n^{45}_1  &-n^{35}_1  &n^{34}_1\\
-n^{12}_2 & -n^{02}_2  &-n^{01}_2 &n^{15}_2 -n^{24}_2     &n^{05}_2 -n^{23}_2      &n^{04}_2 -n^{13}_2  &-n^{45}_2  &-n^{35}_2  &n^{34}_2\\
-n^{12}_3 & -n^{02}_3  &-n^{01}_3 &n^{15}_3 -n^{24}_3     &n^{05}_3 -n^{23}_3      &n^{04}_3 -n^{13}_3  &-n^{45}_3  &-n^{35}_3  &n^{34}_3\\
-n^{12}_4 & -n^{02}_4  &-n^{01}_4 &n^{15}_4 -n^{24}_4     &n^{05}_4 -n^{23}_4      &n^{04}_4 -n^{13}_4  &-n^{45}_4  &-n^{35}_4  &n^{34}_4
\end{array}\! \right].
\]
Entries of the $i^{th}$ row of $K$ correspond to combinations of certain $4 \times 4$
minors of the submatrix $N_i$ obtained by deleting the $i^{th}$ column
of $N$. 
A computation shows that $M\cdot K^t$=0 and therefore
\[
0 \longrightarrow \bigoplus\limits_{i=1}^5\Oc_{\P^1}(-2n+b_i) \stackrel{K^t}{\longrightarrow}\Oc^9_{\P^1}(-n) \stackrel{M}{\longrightarrow}
\Oc^4_{\P^1}\longrightarrow 0
\]
is exact, by the Buchsbaum-Eisenbud criterion.
\end{proof}
\begin{remark}
If the $q_i$ are not linearly independent, then the basepoint free 
assumption means they span a space of dimension 5 or 4, or fall under 
Theorems~\ref{smoothconicres},  \ref{3pointintersection}. When $\dim 
\Span\{q_0,\ldots,q_5\} \in \{4,5\}$, the matrix $N$ is $5 \times 4$ or $4 \times 3$ and the argument 
of Theorem~\ref{1syz3m} works with appropriate modifications, which we leave to the interested reader. 
\end{remark}

\pagebreak
\begin{cor}\label{123ID}
The tautological syzygies constructed in \S \ref{TautSyz} and the
syzygies of Theorem~\ref{1syz3m} are independent.
\end{cor}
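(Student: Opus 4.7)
The plan is to show that the nine first syzygies -- the syzygy of Lemma~\ref{Alicia} in bidegree $(1,3n)$, the three Koszul syzygies in bidegree $(2,2n)$, and the five syzygies of Theorem~\ref{1syz3m} in bidegrees $(3, 2n-b_i)$ -- are $\K$-linearly independent in the quotient $\Syz(\f)/\mm\cdot\Syz(\f)$, where $\mm = (s,t,u,v)$ is the bihomogeneous maximal ideal. By the standard Nakayama-type criterion for bigraded $R$-modules, this is equivalent to asserting that they form part of a minimal generating set of $\Syz(\f)$. The proof is essentially a bidegree comparison, exploiting the fact that the three families of syzygies live in distinct $(s,t)$-degrees of the entries, namely $0$, $1$, and $2$ respectively.

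First I would rule out any cross-reduction of one family by another. For a syzygy of bidegree $(3,2n-b_i)$ to lie in $\mm\cdot\Syz(\f)$ via the tautological syzygies, one would need to multiply the $(1,3n)$ generator by an element of $R_{(2,-n-b_i)}$, or to multiply a Koszul $(2,2n)$ generator by an element of $R_{(1,-b_i)}$; both graded pieces vanish as soon as $b_i>0$. The hypothesis of Theorem~\ref{1syz3m} that $q_0,\dots,q_5$ are linearly independent forces every column of the Hilbert-Burch matrix $N$ to have strictly positive degree, since a column of degree zero would amount to a scalar $\K$-linear relation among the $q_i$; hence $b_i\ge 1$ for all $i$ and the required multipliers are zero. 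Symmetrically, the $(1,3n)$ syzygy cannot be an $\mm$-multiple of a Koszul syzygy, as the required multiplier would lie in $R_{(-1,n)}=0$, and neither can any Koszul syzygy be an $\mm$-multiple of the $(1,3n)$ syzygy (multiplier in $R_{(1,-n)}=0$).

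Independence within each family is immediate. The three Koszul syzygies are clearly $\K$-linearly independent since each involves a distinct pair of the $f_i$. The five syzygies of Theorem~\ref{1syz3m} are exhibited in its proof as a $\K[u,v]$-basis of $\ker(M)$ via the Buchsbaum-Eisenbud criterion, and therefore are \emph{a fortiori} $\K$-linearly independent modulo $\mm\cdot\Syz(\f)$.

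The only substantive step is the positivity claim $b_i\ge 1$, and it is immediate from the linear-independence hypothesis of Theorem~\ref{1syz3m}. I expect no genuine obstacle beyond careful bookkeeping of the bidegree arithmetic.
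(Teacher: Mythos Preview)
Your argument is correct and follows the same bidegree-comparison approach as the paper, which simply observes that the Theorem~\ref{1syz3m} syzygies have strictly lower $\{u,v\}$-degree than the tautological ones (so cannot lie in their $R$-span), while the tautological syzygies have strictly lower $\{s,t\}$-degree than the Theorem~\ref{1syz3m} ones (so cannot lie in theirs). Your version is more careful in making explicit the key fact $b_i\ge 1$ (which the paper uses but leaves implicit) and in treating independence within each family; the one direction you did not spell out---that a tautological syzygy cannot be reduced via the Theorem~\ref{1syz3m} syzygies---follows immediately from your own framework, since the required multiplier would lie in $R_{(-1,\ast)}$ or $R_{(-2,\ast)}=0$.
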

\begin{proof}
The syzygies constructed in Theorem~\ref{1syz3m} cannot be in the span
of the tautological syzygies of \S \ref{TautSyz} because their degree in the $\{u,v\}$
variables is lower than that of the tautological syzygies. On the
other hand, the tautological syzygies cannot be in the span of the syzygies of
Theorem~\ref{1syz3m}, as the tautological syzygies have lower degree
in the $\{s,t\}$ variables.
\end{proof}

\subsection{Computing first Betti numbers, the general setting}\label{ssec:Bettigen}

The Koszul homology of the module $H_1$ is computed from the complex $\m_\bullet:=\m_\bullet((s,t,u,v),H_1)$:
\begin{equation}\label{eqKoszulH1}
 \m_\bullet: 0\to H_1(-4)\stackrel{\varphi_4}{\lto} H_1(-3)^4\stackrel{\varphi_3}{\lto} H_1(-2)^6\stackrel{\varphi_2}{\lto} H_1^4 
\stackrel{\varphi_1}{\lto} H_1 \to 0.
\end{equation}

The bigraded complex $\m_\bullet$ has the following shape:

\begin{small}
\begin{table}[ht]
\begin{center}
\begin{supertabular}{|c|}
\hline 
  $0 \lto H_1(-2,-2) \lto 
 \begin{array}{c}
 H_1(-2,-1)^2\\
 \oplus \\
H_1(-1,-2)^2\\
 \end{array} \lto 
 \begin{array}{c}
 H_1(-2,0)\\
 \oplus \\
H_1(-1,-1)^4\\
\oplus \\
H_1(0,-2)\\
\end{array} \lto 
\begin{array}{c}
 H_1(0,-1)^2\\
 \oplus \\
H_1(-1,0)^2\\
 \end{array} \lto 
H_1\to 0$ \\
\hline 
\end{supertabular}
\end{center}
\label{T3}
\end{table}
\end{small}
\noindent We denote by $H({\mathcal M})_i$ the $i$-th homology module.

\begin{thm0}\label{th:KoszulM}
 For any $\a \in \Z^2_{\ge 0}$, we have the equality
 \begin{equation}\label{eq:betaM}
  \beta_{1,\a} \, = \dim_{\K}(H({\mathcal M})_{1,\a})- \dim_{\K}(H({\mathcal M})_{2,\a}).
 \end{equation}
\end{thm0}

\begin{proof}
First, observe that $ \beta_{1,\a} \neq 0$ iff $(H_1)_\a$ is not spanned by the images of $(H_1)_{\a-e_1}$ and $(H_1)_{\a-e_2}$. Hence $\beta_{1,{\bf a}} \, = \dim_{\K}(H({\mathcal M})_{0,\a})$. As $\dim_{\K}(H({\mathcal M})_{0,\a})$ is the alternating sum of $\dim_{\K}(H({\mathcal M})_{i,\a})$ for $i>0$, it suffices to show that 
\[
H({\mathcal M})_3 = 0 = H({\mathcal M})_4.
\]
Let $K_{i,m}=\Kc_i((s,t,u,v); R)$ and $K_{i,f}=\Kc_i(f; R)$  denote the Koszul complex of $(s,t,u,v)$ and the Koszul complex of $f$ on $R$ respectively. We consider the two spectral sequences coming from the double complex $\Cc_{i,j}=K_{i,m}\otimes_R K_{i,f}$.
\[
^hE_{i,j}^1 = H_i(K_{\bullet,m})\otimes_R K_{i,f}
\]
\[
^vE_{i,j}^1 = K_{i,m}\otimes_R H_j(K_{\bullet,f})
\]

Since $K_{\bullet,m}$ is acyclic, $H_i(K_{\bullet,m})=0$ iff $i\neq 0$, and $H_0(K_{\bullet,m})=\K$. Thus, 
\[
^hE_{i,j}^1=\begin{cases}
 k^{\binom{3}{j}} \mbox{ if }i=0\\
0 \mbox{ otherwise.}
\end{cases}
\]

On the vertical spectral sequence, one has:
\[
^vE_{i,j}^1=\begin{cases}
 K_{i,m}\otimes_R H_0(K_{\bullet,f}) \mbox{ if }j=0\\
 K_{i,m}\otimes_R H_1(K_{\bullet,f})=\m. \mbox{ if }j=1\\
0 \mbox{ otherwise.}
\end{cases}
\]

\noindent Comparing the abutment of both spectral sequences, we have $^vE_{4,1}^2=H(\m.)_4=0$, $^vE_{3,1}^2=H(\m.)_3=0$ and  $^vE_{4,0}^2=H_4(K_{\bullet,m}\otimes_R H_0(K_{\bullet,f}))=0$.
\end{proof}

\begin{example0}\label{ex:(1,6)}
The homologies $H({\mathcal M})_{i}, i=1,2$ in~\eqref{eq:betaM} might be both nonzero.
For instance, let $\d=(1,6)$. Below we list all nonzero, non-Koszul
degree first Betti numbers for generic $\f$:
\begin{equation}\label{eq:beta2a}
 \beta_{1,(3,10)}= 1, \,  \beta_{1,(3,11)}= 4, \, \beta_{1,(4,10)}= 3, \, \beta_{1,(6,9)}= 2, \, \beta_{1,(1,18)}= 1.
 \end{equation}
The sum of all these numbers equals $11$. On the other side, the sum of the dimensions of all
$\dim_{\K} (H({\mathcal M})_{1,\a})$ equals $18$  and the sum of the dimensions of all $\dim_{\K} (H({\mathcal M})_{2,\a})$ 
equals $7$.  Indeed, $11=18-7$. We plot in Figure 1 below the bidegrees $\a$ with nonzero $\beta_{1,\a}$ in~\eqref{eq:beta2a}, together
with the curve $n_{(1,6)}(\a)=1$.
\vskip -.35in
\begin{figure}[ht]\label{FirstFig}
\begin{center}
\includegraphics[scale=0.28]{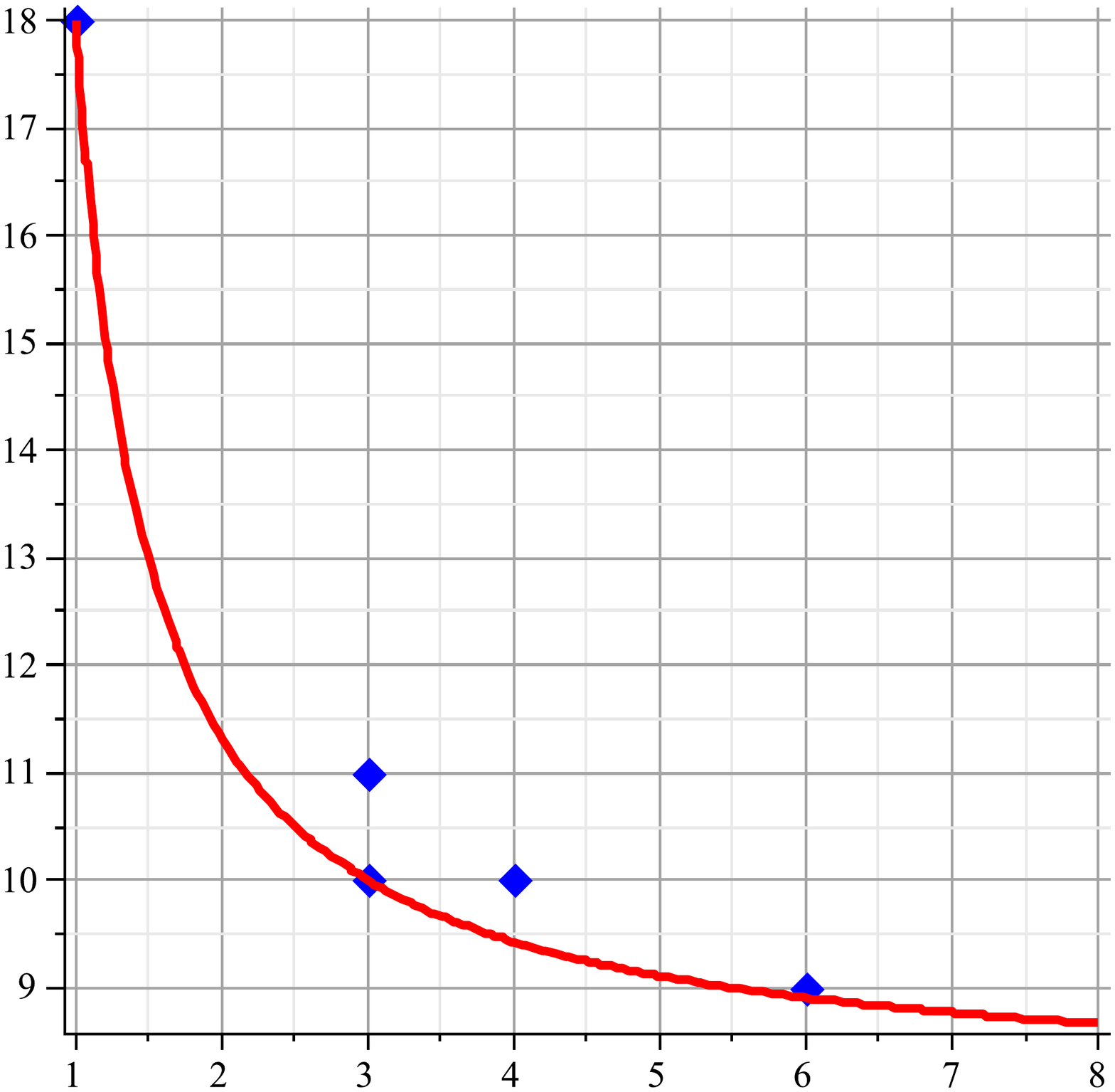}
\vskip -.5in
\caption{The generic case with $\d=(1,6)$}
\end{center}
\end{figure}
\noindent The values of $n_{(1,6)}(a)$ are given below, where the column on the left represents $a_1=0$, and the row on the bottom $a_2=0$.
\vskip -.35in

\begin{footnotesize}
\begin{center}
\begin{verbatim}
      | 0 3 0 0 0  0  0  0  0  0  0  |
      | 0 2 0 0 0  0  0  0  0  0  0  |
      | 0 1 0 0 0  0  0  0  0  0  0  |
      | 0 0 0 0 0  0  0  0  0  0  0  |
      | 0 0 0 1 2  3  4  5  6  7  8  |
      | 0 0 0 2 4  6  8  10 12 14 16 |
      | 0 0 0 3 6  9  12 15 18 21 24 |
      | 0 0 0 4 8  12 16 20 24 28 32 |
      | 0 0 0 5 10 15 20 25 30 35 40 |
      | 0 0 0 6 12 18 24 30 36 42 48 |
      | 0 0 0 1 5  9  13 17 21 25 29 |
      | 0 0 0 0 0  0  2  4  6  8  10 |
      | 0 0 0 0 0  0  0  0  0  0  0  |
      | 0 0 0 0 0  0  0  0  0  0  0  |
      | 0 0 0 0 0  0  0  0  0  0  0  |
      | 0 0 0 0 0  0  0  0  0  0  0  |
      | 0 0 0 0 0  0  0  0  0  0  0  |
      | 0 0 0 0 0  0  0  0  0  0  0  |
      | 0 0 0 0 0  0  0  0  0  0  0  |
      | 0 0 0 0 0  0  0  0  0  0  0  |
      | 0 0 0 0 0  0  0  0  0  0  0  |
\end{verbatim}
\end{center}
\end{footnotesize}

The values of the Betti numbers in~\eqref{eq:beta2a} can be deduced from Theorem~\ref{th:KoszulM} and Lemma~\ref{lem:generic}. A necessary condition is that $n_{(1,6)}(\a) \ge1$. For instance, we have that  
\[
n_{(1,6)}(3,10)=1, n_{(1,6)}(3,11)=6, \mbox{ and  }n_{(1,6)}(4,10)=5,
\]
so 
\[
\beta_{1,(3,11)}= n_{(1,6)}(3,11) - 2 n_{(1,6)}(3,10) = 6-2=4.
\]
Similarly, 
\[
\beta_{1,(4,10)}= n_{(1,6)}(4,10) - 2 n_{(1,6)}(3,10) = 5-2=3. 
\]
On the other side, 
\[
\beta_{1,(6,9)}= n_ {2,(6,9)}=2, \mbox{ and }\beta_{1,(1,18)}= n_{2,(1,18)}=1. 
\]
For a final example,  $\beta_{1,(5,10)}=0$ because 
\[
n_{2,(5,10)}=9 = 3 n_{2,(3,10)} + 2 (n_{2,(4,10)} - 2 n _{2,(3,10)}) = 2 n_{2,(4,10)} - n _{2,(3,10)}, 
\]
and $\beta_{1,(1,19)}=0$ because $n_{(1,6)}(1,19) < 2 n_{(1,6)}(1,18)$.
\end{example0}

\begin{example0}\label{ex:(1,42)}
Consider the bidegree $\d = (1,42)$ and $\f$ generic. We list all
bidegrees $\a$ with nonzero, non-Koszul first Betti number:
\begin{eqnarray*}\label{eq:beta2b} 
{}&(7, 67), (6, 68), (9, 66), (8, 67), (7, 68), (6, 69), (5, 70), (10,66), (4, 72), \\
		& (12, 65), (3, 75), (3, 76), (17, 64), (18, 64), (33, 63), (1, 126).
\end{eqnarray*}
This is the ordered list of the corresponding Betti numbers: $2, 3, 5, 8, 5, 8, 9, 3, 7, 6, 2, 3$, $3, 1, 2,1$.
In this example, there are minimal generators of the 
syzygy module in degrees $\a$, $\a-(1,0)$ and $\a-(0,1)$, for $\a=(7,68)$.
We focus on the bidegrees $(7, 68), (6, 68), (7,67)$, marked with solid diamonds in Figure 2. Note that we also show a few other bidegrees but we
do not display all bidegrees with nonzero, non-Koszul first Betti
number in the list above.
All these bidegrees must satisfy that  $n_{(1,42)}(\a) >0$. We also plot the curve $n_{(1,42)}(\a)=1$.
Again, the values of the Betti numbers can be deduced from Theorem~\ref{th:KoszulM} and Lemma~\ref{lem:generic}. 
\vskip -.35in
\begin{figure}[ht]
\begin{center}
\includegraphics[scale=0.28]{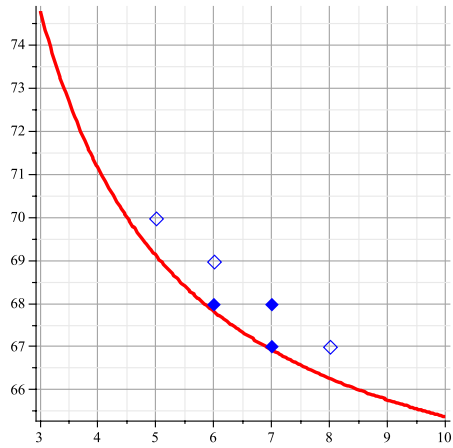}
\vskip -.5in
\caption{The generic case with $\d=(1,42)$}
\end{center}
\end{figure}
\end{example0}


\section{Factorization of sections of $\OPP(1,n)$ and the Segre variety $\Sigma_{1,n}$}
Recall the Segre variety $\Sigma_{r,s}$ is the image of the regular map
\[
\P^r \times \P^s \stackrel{\sigma_{r,s}}{\longrightarrow} \P^{rs+r+s}
\]
given by multiplication
\[
(x_0: \ldots : x_r),(y_0 : \ldots : y_s) \mapsto (x_0y_0: \ldots
:x_0y_s: x_1y_0:\ldots : x_ry_s).
\]

In general one has the following diagram

\begin{equation}\label{e1}
\xymatrix@C=8pt{
 \mathbb{P}(H^0(\Oc_\PP(1,i))) 
\!\times\! \mathbb{P}(H^0(\OPone(n\!-\!i))) \ar[r]\ar[dr]^{\psi_i} &
\mathbb{P}^{(2i+1)(n-i)+i+n+1} \ar[d]^{\pi}    \\
 & \mathbb{P}(H^0(\Oc_\PP(1,n)))}
\end{equation}

\noindent The composition of the Segre map
\[
\P(H^0(\OPP(1,1))) \times \P(H^0(\OPP(0,n-1))) = \P^3 \times \P^{n-1} \longrightarrow \P^{4n-1},
\]
with the projection $\pi$ onto $\P(H^0(\OPP(1,n)))$ is given with
respect to the basis 
$\{su^n, \cdots, sv^n, tu^n,\ldots, tv^n\}$ for $H^0(\OPP(1,n))$ by 
\[
\begin{array}{c}
(a_0: \cdots :a_3) \times (b_0: \cdots :b_{n-1}) \mapsto \\
(a_0b_0:a_0b_1+a_1b_0:a_0b_2+a_1b_1:\ldots a_0b_{n-1}+a_1b_{n-2}:a_1b_{n-1}:\\
a_2b_0:a_2b_1+a_3b_0:a_2b_2+a_3b_1:\ldots : a_2b_{n-1}+a_3b_{n-2}:a_3b_{n-1})
\end{array}
\]
For example, when $n=2$, the image of $\psi_1$ is a quartic hypersurface
\[
Q = \V({x}_{2}^{2} {x}_{3}^{2}-{x}_{1} {x}_{2} {x}_{3} {x}_{4}+{x}_{0} {x}_{2}
     {x}_{4}^{2}+{x}_{1}^{2} {x}_{3} {x}_{5}-2 {x}_{0} {x}_{2} {x}_{3} {x}_{5}-{x}_{0} {x}_{1} {x}_{4} {x}_{5}+{x}_{0}^{2} {x}_{5}^{2}).
\]

\begin{defn}
The image of the composite map $\psi_i$ is $\Sigma'_{2i+1,n-i}$.
\end{defn}

Therefore for $i=0$ we have $\Sigma_{1,n} = \Sigma'_{1,n}$, but for $i \ge 1$
the variety $\Sigma'_{2i+1,n-i}$ is a linear projection of
$\Sigma_{2i+1,n-i}$, with $\codim(\Sigma'_{2i+1,n-i})=n-i$. In particular $\Sigma'_{2i+1,n-i}$ is a hypersurface in $\Sigma'_{2i+3,n-i-1}$, and  
\[
\Sigma_{1,n}  = \Sigma'_{1,n} \subseteq \Sigma'_{3,n-1} \subseteq \Sigma'_{5,n-2}
\subseteq \cdots \subseteq \Sigma'_{2n-3,2} \subseteq \Sigma'_{2n-1,1} \subseteq \P^{2n+1}.
\]
If the basepoint free subspace $W \simeq \P^2$ is generic, then $W\cap \Sigma'_{2n-1,1}$ has dimension $1$,  $W\cap \Sigma'_{2n-3,2}$ is generically finite, and $W\cap \Sigma'_{2i+1,n-i}$ is empty for $i\neq n-1,n-2$.
\begin{lem}\label{BPfree}
If $W$ is a basepoint free three dimensional subspace of $\hh$, then 
$W$ is not contained in $\Sigma_{1, n}$. 
\end{lem}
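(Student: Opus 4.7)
The plan is to argue by contradiction: assume $\P(W) \subseteq \Sigma_{1,n}$, then every nonzero $f \in W$ factors uniquely (up to scalar) as $f = \ell \cdot p$ with $\ell \in R_{(1,0)}$ and $p \in R_{(0,n)}$. The key observation is that unique factorization really does hold: if $\ell \cdot p = \ell' \cdot p'$ with $\ell, \ell'$ linear in $\{s,t\}$ and $p, p'$ of degree $n$ in $\{u,v\}$, then $\ell' \mid \ell \cdot p$ in the UFD $R$, and since $\ell'$ involves only $s,t$ while $p$ involves only $u,v$, we must have $\ell'$ an associate of $\ell$. This makes the map
\[
\phi \colon \P(W) \longrightarrow \P^1, \qquad [\ell \cdot p] \longmapsto [\ell]
\]
well defined. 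It is a morphism, being the composition $\P(W) \hookrightarrow \Sigma_{1,n} \xrightarrow{\sigma_{1,n}^{-1}} \Pone \times \P^n \xrightarrow{\pi_1} \Pone$.

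I would then split into two cases. If $\phi$ is constant with value $[\ell_0]$, then every element of $W$ lies in $\ell_0 \cdot H^0(\OPone(n))$, so the point $(x_0, y) \in \PP$ determined by $\ell_0(x_0) = 0$ is a basepoint of $W$ for every $y \in \Pone$, contradicting the basepoint free assumption.

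If $\phi$ is nonconstant, then it is a surjective morphism $\P^2 \to \Pone$, so by the fiber dimension theorem every fiber has dimension exactly $1$. Each fiber $\phi^{-1}([\ell])$ equals $\P\bigl(W \cap \ell \cdot H^0(\OPone(n))\bigr)$, which is a linear subspace of $\P(W) \cong \P^2$; being $1$-dimensional and linear, it is a line. However, for two distinct values $[\ell_0] \neq [\ell_1]$ the fibers must be disjoint: a common point would yield a nonzero $f \in W$ with both $\ell_0 \mid f$ and $\ell_1 \mid f$, and since $\ell_0, \ell_1$ are coprime this forces $\ell_0 \ell_1 \mid f$, which is impossible for $f$ of bidegree $(1,n)$. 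Two distinct lines in $\P^2$ always meet, contradicting disjointness.

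The main subtlety is justifying that $\phi$ is an honest morphism and that the fibers are genuinely linear subspaces of $\P(W)$; once this is in place, the fiber dimension theorem together with the elementary fact about lines in $\P^2$ closes the argument. Everything else (unique factorization, intersection of fibers) is a short computation in the bigraded ring $R$.
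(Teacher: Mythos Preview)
Your proof is correct, but the route differs from the paper's. The paper invokes a classical result (Theorem~9.22 of Harris, \emph{Algebraic Geometry, A First Course}) which says that any linear subspace of a Segre variety $\Sigma_{m,n}$ must lie inside one of the two rulings; it then observes directly that a $\P^2$ contained in a ruling fiber forces $W$ to have a common factor, hence a basepoint. You instead construct the projection $\phi\colon \P(W)\hookrightarrow\Sigma_{1,n}\cong\P^1\times\P^n\to\P^1$ and argue from scratch that $\phi$ must be constant: your Case~2 is essentially an elementary proof (via the fiber-dimension theorem and the fact that two lines in $\P^2$ meet) of the well-known statement that there is no nonconstant morphism $\P^2\to\P^1$. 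The trade-off is that the paper's argument is a two-line appeal to a standard reference, while yours is self-contained and avoids the external citation; in effect you are reproving the special case of Harris's theorem needed here. One small remark: in Case~2 you could shorten the argument by simply noting that fibers of any map are automatically disjoint, so the unique-factorization computation there is not needed (it was already used to make $\phi$ well defined).
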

\begin{proof}
If $W \subseteq \Sigma_{1, n}$, then by Theorem 9.22 of \cite{harris}, the only linear spaces contained in  
$\Sigma_{n,m}$ are those contained in one of the rulings. So 
$W \subseteq \Sigma_{1, n}$ would mean that $W = \{a\cdot l, b \cdot l, c \cdot l\}$ with
$\{a,b,c\}$ belonging to a fiber. But then $l$ is
either in $H^0(\OPP(1,0))$ or in $H^0(\OPone(n))$ and $W$ is not basepoint free.
\end{proof}
As $n$ grows, so do the possibilities for the 
intersection $W \cap \Sigma'_{2i+1,n-i}$.  However, for some 
special situations governed by geometry, there are resolutions 
which are independent of $n$, which we now explore. 
\subsection{Intersection with $\Sigma_{1,n}$} 
In the case where $n=2$, \cite{cds} shows that the only way in which $W$
can meet $\Sigma_{1,2}$ in a curve is if the curve is a smooth
conic. This phenomenon persists, but we need a bit more machinery.
\vskip .1in
\noindent {\bf Theorem} (Burau-Zeuge \cite{bz})
If $L$ is a linear space cutting each $n$-dimensional ruling of
$\Sigma_{1,n}$ in at most a single point, then $L \cap \Sigma_{1,n}$
is a rational normal curve.\newline
\begin{lem}~\label{conic1}
If $W$ meets $\Sigma_{1,n}$ in a curve, then it must be a smooth conic. 
\end{lem}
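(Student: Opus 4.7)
The plan is to apply the Burau--Zeuge theorem quoted just above. Since by hypothesis $W\cap\Sigma_{1,n}$ has dimension at least one, Burau--Zeuge will identify it as a rational normal curve inside $W\simeq\P^2$, and such a curve is necessarily a smooth conic. The one thing to check is the hypothesis of Burau--Zeuge: that the linear space $W$ meets every $n$-dimensional ruling of $\Sigma_{1,n}$ in at most one point.

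The $n$-dimensional rulings of $\Sigma_{1,n}$ are the linear subspaces
\[
R_{l}\;=\;\bigl\{[l\cdot g]:g\in \K[u,v]_n\bigr\}\;\subseteq\;\P(\hh),
\]
one for each $[l]\in\P(H^0(\OPP(1,0)))=\P^1$; the other family of rulings is only one-dimensional (a pencil of lines) because $\OPP(1,n)$ is asymmetric in its two indices. I would argue by contradiction: suppose $\dim(W\cap R_l)\geq 1$, so $W$ contains two linearly independent sections $l\cdot g_1$ and $l\cdot g_2$. Extend to a basis $\{l\cdot g_1,\,l\cdot g_2,\,h\}$ of $W$.

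The curve $V(l)\subset\PP$ is a $\P^1$ (the $(u,v)$-line on which the linear form $l\in\K[s,t]$ vanishes), and the restricted line bundle $\OPP(1,n)|_{V(l)}\cong\OPone(n)$. Both $l\cdot g_1$ and $l\cdot g_2$ vanish identically on $V(l)$, while $h|_{V(l)}$ is a section of $\OPone(n)$. If $h|_{V(l)}\equiv 0$, then every point of $V(l)$ is a basepoint of $W$; otherwise, since $n\geq 1$, the section $h|_{V(l)}$ has at least one zero on $V(l)$, and at any such zero all three basis elements of $W$ vanish simultaneously. Either possibility contradicts the basepoint-free hypothesis, so $W$ meets $R_l$ in at most one point, as required.

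With the hypothesis of Burau--Zeuge verified, $W\cap\Sigma_{1,n}$ is a rational normal curve inside $W\simeq\P^2$, i.e., a smooth conic. The main subtlety here is book-keeping with the ruling: one must identify correctly which family of rulings is $n$-dimensional, and then translate a positive-dimensional intersection of $W$ with a ruling into the existence of two sections of $W$ sharing a common linear factor --- the rest is the standard ``divide out and restrict'' argument producing basepoints. (One could alternatively handle the $h|_{V(l)}\equiv 0$ case by invoking Lemma~\ref{BPfree} directly, since then $W\subseteq R_l\subseteq\Sigma_{1,n}$.)
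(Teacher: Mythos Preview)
Your proof is correct and follows essentially the same route as the paper: verify the hypothesis of the Burau--Zeuge theorem by showing (via the basepoint-free assumption) that $W$ cannot meet any $n$-dimensional ruling $R_l$ in more than one point, then conclude that $W\cap\Sigma_{1,n}$ is a rational normal curve in $W\simeq\P^2$, hence a smooth conic. Your restriction argument (the third basis element $h$ restricted to $V(l)\simeq\P^1$ must vanish somewhere) is exactly the paper's observation that $\V(l,c)\neq\emptyset$, just stated a bit more explicitly.

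The only difference is that the paper also separately rules out the possibility that $W$ contains a $\P^1$ ruling of $\Sigma_{1,n}$ before invoking Burau--Zeuge. You skip this, and you are right to: the Burau--Zeuge hypothesis concerns only the $n$-dimensional rulings, so once that is verified the conclusion (smooth conic) already excludes any line component. Your version is therefore slightly more streamlined.
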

\begin{proof}
First, suppose $W$ contains a $\P^1$ fiber of $\Sigma_{1,n}$, so
that $W$ has basis $\{l_1s, l_2s, q\}$ with $l_i$ corresponding
to points on the $\P^1$. Then $\V(s,q)\ne \emptyset$ and $W$ is 
not basepoint free. Next, since $W$ is linear, it 
cannot meet a $\P^n$ fiber $F$ of
$\Sigma_{1,n}$ in more than two noncollinear points, for then
it would be contained in $F$ and hence violate
Lemma~\ref{BPfree}. If $W$ meets $F$ in two points, since
$W$ and $F$ are both linear, $W \cap F$ is a line $L$, and if $F$ is
the fiber over the point $l$ of $L$, then $W=\{al, bl, c\}$ and
since $\V(l,c)$ is nonempty on $\PP$, $W$ would have 
basepoints. In particular, $W$ can meet each $\P^n$ fiber in at
most a point, so by the result of Burau-Zeuge, $W \cap \Sigma_{1,n}$ is
a rational normal curve. As $W \simeq \P^2$,  the curve must
be a smooth conic.
\end{proof}
\begin{thm}\label{smoothconic}
$W \cap \Sigma_{1,n}$ is a smooth conic iff $I_W$ has a bidegree
$(3,n)$ first syzygy.
\end{thm}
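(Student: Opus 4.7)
For the forward direction, assume $W \cap \Sigma_{1,n}$ is a smooth conic $C$. The parameterization $\phi: \P^1 \to C$ is an isomorphism onto $C$, so $\phi: \P^1 \to \P^{2n+1}$ has degree $2$. Lifting $\phi$ through the Segre embedding yields $\tilde\phi: \P^1 \to \P^1 \times \P^n$ with component maps of degrees $a$ and $b$; the Segre composition has degree $a+b = 2$. The case $b=0$ confines $C$ to the Segre image of $\P^1 \times \{pt\}$, which is a line in $\P^{2n+1}$, contradicting smoothness; the case $a=0$ forces $W = l \cdot V$ for some $l \in R_{(1,0)}$ and a $3$-dimensional $V \subset R_{(0,n)}$, producing basepoints along $\V(l)$. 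Hence $a=b=1$, and after choosing the source parameter, $\tilde\phi(p{:}q) = (ps+qt)(ph_0+qh_1)$ for some linearly independent $h_0, h_1 \in R_{(0,n)}$. Expanding gives $W = \langle sh_0,\, sh_1+th_0,\, th_1\rangle$, and
\[
t^2 \cdot (sh_0) \;-\; st \cdot (sh_1+th_0) \;+\; s^2 \cdot (th_1) \;=\; 0
\]
exhibits the required bidegree $(3,n)$ syzygy.

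For the converse, suppose $\sum_i a_i f_i = 0$ with $a_i \in R_{(2,0)}$ not all zero. The crucial step is to show the $a_i$ span all of $R_{(2,0)}$. A one-dimensional span forces a linear dependence among the $f_i$, which is absurd. A two-dimensional span $\langle a,b\rangle$ lets us write $a_i = \lambda_i a + \mu_i b$, giving $aF+bG = 0$ with $F=\sum \lambda_i f_i$ and $G = \sum \mu_i f_i$ linearly independent in $W$. In the UFD $R$ set $d=\gcd(a,b) \in \K[s,t]$: $\deg d = 2$ forces $a,b$ proportional, $\deg d = 0$ forces $b\mid F$ which is incompatible with the bidegree of $F$, and $\deg d = 1$ yields $F = b'F'$ and $G = -a'F'$ with coprime $a',b' \in R_{(1,0)}$ and $F' \in R_{(0,n)}$. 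In this last subcase $W \supseteq F' \cdot R_{(1,0)}$, and any complementary element of $W$ restricts on each irreducible component of $\V(F')\subset \PP$ to a linear form in $s,t$ that must vanish at a point, contradicting basepoint-freeness. So $a_0, a_1, a_2$ form a basis of $R_{(2,0)}$.

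After a change of basis of $W$ the syzygy becomes $s^2 f_0 + st f_1 + t^2 f_2 = 0$. Writing $f_i = sp_i + tq_i$ with $p_i, q_i \in R_{(0,n)}$ and collecting the coefficients of $s^3, s^2t, st^2, t^3$ forces $p_0 = q_2 = 0,\; p_1 = -q_0,\; p_2 = -q_1$, so $f_0 = tq_0,\; f_1 = -sq_0+tq_1,\; f_2 = -sq_1$, with $q_0, q_1$ linearly independent (else $\dim W < 3$). The element $\alpha f_0+\beta f_1+\gamma f_2 = s(-\beta q_0 - \gamma q_1) + t(\alpha q_0 + \beta q_1)$ is factorizable as $(us+vt)\cdot h$ exactly when its two $R_{(0,n)}$-coefficients are proportional, which by linear independence of $q_0, q_1$ reduces to $\alpha\gamma - \beta^2 = 0$, defining a smooth conic in $\P(W) \cong \P^2$. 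The main obstacle is the two-dimensional span case of the converse: only the combination of unique factorization in $R$ with basepoint-freeness succeeds there, and the geometry of $\V(F')$ must be tracked carefully through each subcase of $\deg\gcd(a,b)$.
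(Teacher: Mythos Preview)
Your proof is correct and lands on the same canonical form for $W$ as the paper does. The paper argues in the reverse order: it first assumes a $(3,n)$ syzygy, reduces to $s^2 f_0 + st f_1 + t^2 f_2 = 0$, and then reads $(f_0,f_1,f_2)$ as a syzygy on $(s^2,st,t^2)$, whose Hilbert--Burch matrix immediately gives $f_0=ta_0$, $f_1=sa_0+ta_1$, $f_2=sa_1$; for both the conic parameterization and the converse direction it defers to Theorem~4.1 of \cite{cds}. Your version is self-contained on both ends: the conic-to-syzygy implication is handled by the degree count on the lift $\tilde\phi:\P^1\to\P^1\times\P^n$, and the syzygy-to-conic implication finishes with the direct computation of the factorizable locus $\V(\alpha\gamma-\beta^2)$. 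Your $\gcd$ casework on the two-dimensional span of the $a_i$ is also more careful than the paper's, which asserts without comment that the two generating quadrics have no common factor---your $\deg d=1$ subcase is exactly what fills that in. One cosmetic point: in writing the factorizability condition as $(us+vt)\cdot h$ you have reused $u,v$, which already name the degree-$(0,1)$ variables; use fresh scalar names there.
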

\begin{proof}
Suppose $I_W$ has a minimal first syzygy of bidegree $(3,n)$, 
\[ 
a(s,t)\cdot f_0+b(s,t)\cdot f_1+c(s,t)\cdot f_2 = 0.
\] 
If $\langle a(s,t), b(s,t), c(s,t) \rangle \ne \langle s^2, st,
t^2\rangle$, then it must be generated by two bidegree $(2,0)$
quadrics $\{a(s,t),b(s,t)\}$ with no common factor. Changing basis for
$I_W$, the syzygy involves only $f_0, f_1$, which implies 
$f_0, f_1 = -b(s,t)g, a(s,t)g$ for some $g$. This is impossible 
by degree considerations, so after a change of basis for $W$, 
we may assume the $f_i$ satisfy
\[
s^2\cdot f_0+st\cdot f_1+t^2\cdot f_2 = 0 
\]
Now we switch perspective, and consider $[f_0,f_1,f_2]$ as a syzygy
on $[s^2,st,t^2]$. Since the syzygies on the latter space are
generated by the columns of 
\[
\left[\begin{matrix} t & 0\\
                    -s & -t\\ 
                     0 & s 
\end{matrix}\right],
\]
we have
\[
\begin{array}{ccc}
f_0 &=& t a_0\\
f_1 &=& sa_0 + ta_1\\
f_2 &=& sa_1,\\
\end{array}
\]
with the $a_i \in k[u,v]_n$. In particular, $\{a_0, a_1\}$ are a 
basepoint free pencil of $H^0(\OPone(n))$, and we may 
parameterize as in the proof of Theorem 4.1 of \cite{cds}
so that $W \cap \Sigma_{1,n}$ is a smooth conic. On the other hand, if $W \cap \Sigma_{1,n}$ is a smooth conic, the proof follows as in Theorem 4.1 of \cite{cds}.
\end{proof}
\begin{exm}
If $W= \{su^n, tv^n, sv^n+tu^n\}$, then since $\Sigma_{1,n}$ is given 
by the $2 \times 2$ minors of the matrix 
\[
\left[ 
\begin{array}{ccc}
x_0 & \cdots & x_n\\
x_{n+1} & \cdots &x_{2n+1}
\end{array} \right], 
\]
with $x_i =su^{n-i}v^i$ for $i\in \{0, \ldots, n\}$ and $tu^{n-i}v^i$
for $i\in \{n+1, \ldots, 2n+2\}$, so (dualizing) $W =\V(x_1, \ldots,
x_{n-1}, x_{n+2},\ldots x_{2n+1}, x_n-x_{n+1}) \subseteq \P(H^0(\OPP(1,n))^\vee)$,
and using coordinates $\{x_0,x_n,x_{2n+1}\}$ for $W$,  $W \cap \Sigma_{1,n} =W \cap 
\V(x_0x_{2n+1}-x_n^2)$. 
\end{exm}

\begin{remark}[Effective criterion]
Notice that given $I_W$, Theorem \ref{smoothconic} gives an effective
way to understand the geometry of $W$. Combined with the following
result we obtain a complete description of the minimal free resolution
of $I_W$ just by computing wether or not $I_W$ has a bidegree $(3,n)$ first syzygy.
\end{remark}

\subsection{Minimal free resolutions determined by the geometry of  $ W \cap \Sigma_{1,n}$ } 
We now examine situations where $ W \cap \Sigma_{1,n}$ has special
geometry.\newline

\begin{thm}\label{smoothconicres}
$W \cap \Sigma_{1,n}$ is a smooth conic iff $I_W$ has bigraded Betti numbers:
\begin{table}[ht]
\begin{supertabular}{|c|}
\hline 
  $0 \leftarrow I_W \leftarrow (-1,-n)^3 \stackrel{\partial_1}{\longleftarrow} \begin{array}{c}
( -1,-3n)\\
 \oplus \\
 (-2,-2n)^3\\
 \oplus \\
(-3,-n)\\
\end{array} \stackrel{\partial_2}{\longleftarrow} 
\begin{array}{c}
 (-2,-3n)^2\\
 \oplus \\
(-3,-2n)^2\\
 \end{array} \stackrel{\partial_3}{\longleftarrow} 
(-3,-3n)\leftarrow 0$ \\
\hline 
\end{supertabular}
\label{T2}
\end{table}
\end{thm}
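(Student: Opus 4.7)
The easy direction is $(\Leftarrow)$: if the listed Betti numbers hold, then $\beta_{1,(3,n)}=1$ certifies a minimal first syzygy of bidegree $(3,n)$, and Theorem~\ref{smoothconic} immediately gives that $W\cap \Sigma_{1,n}$ is a smooth conic. For the forward direction, my plan is to first invoke Theorem~\ref{smoothconic} to put $W$ in the normal form
\[
f_0 = t a_0, \quad f_1 = s a_0 + t a_1, \quad f_2 = s a_1,
\]
where $\{a_0,a_1\}\subset \K[u,v]_n$ is a basepoint-free pencil. With this specific shape, I will exhibit an explicit complex with the claimed shifts and prove that it is a minimal free resolution of $I_W$ via the Buchsbaum-Eisenbud exactness criterion.

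For the first differential $\partial_1$, I would list the five candidate minimal first syzygies on $(f_0,f_1,f_2)$: the Hilbert--Burch syzygy of Lemma~\ref{Alicia}, which in our normal form specializes to $A = (-a_1^2,\, a_0a_1,\, -a_0^2)$ in bidegree $(1,3n)$; the three Koszul syzygies $K_{01},K_{02},K_{12}$ in bidegree $(2,2n)$; and the smooth-conic syzygy $S=(s^2,-st,t^2)$ in bidegree $(3,n)$. For $\partial_2$, the tautological columns furnished by Proposition~\ref{prop3.2} supply two minimal second syzygies of bidegree $(2,3n)$ binding $A$ and the $K_{ij}$, while the identities
\[
s\,K_{01} - t\,K_{02} = a_0\, S, \qquad s\,K_{02} - t\,K_{12} = a_1\, S,
\]
supply two further second syzygies of bidegree $(3,2n)$; these are independent from the previous two on degree grounds (they have no $A$-coordinate and nonzero $S$-coordinate). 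Finally, Proposition~\ref{prop3.2} produces the unique third syzygy of bidegree $(3,3n)$ coming from the kernel vector $(t,-s,\pm1)$ of the auxiliary matrix $A'$.

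To verify this is the full minimal resolution I will apply the Buchsbaum-Eisenbud criterion, checking that the expected ranks $(1,4,3,1)$ alternate correctly and that the ideal of maximal minors of each differential has depth at least equal to its position. The depth conditions follow from the basepoint-free assumption on $\{a_0,a_1\}$ together with~\eqref{eq:B}, which guarantees that the appropriate minor ideals contain large enough powers of $B=\langle s,t\rangle\cap\langle u,v\rangle$. Minimality is automatic because every entry of each $\partial_i$ lies in $\mm$. As a sanity check (and also as a way to see that no further minimal generators in other bidegrees appear) I would compare the Hilbert series of the complex with that of $R/I_W$; this uses the analysis of Section~\ref{sec:generalcase} and Corollary~\ref{cor:H1} to rule out non-Koszul first syzygies outside the bidegrees $(1,3n)$ and $(3,m)$ with $n\le m\le 2n-2$, and then an explicit computation of $\dim(H_1)_{(3,m)}$ in our normal form to confirm that only $m=n$ contributes a new minimal generator.

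The main obstacle is not any one of these steps individually but rather the combinatorial bookkeeping of the second syzygies: one must show that the four candidate second syzygies together generate all relations among the first syzygies, and that the single third syzygy generates all second-order relations. The cleanest way I see to handle this is the Buchsbaum--Eisenbud route, which reduces the problem to a rank-and-depth check on the explicit matrices produced above, rather than a direct generator-by-generator argument.
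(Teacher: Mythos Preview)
Your proposal is correct and follows essentially the same route as the paper: put $W$ in the normal form $f_0=ta_0,\ f_1=sa_0+ta_1,\ f_2=sa_1$ via Theorem~\ref{smoothconic}, write down the five first syzygies (the determinantal one of Lemma~\ref{Alicia}, the three Koszul, and $(s^2,-st,t^2)$), build $\partial_2$ and $\partial_3$ explicitly, and verify exactness by Buchsbaum--Eisenbud. Your two bidegree-$(3,2n)$ second syzygies $sK_{01}-tK_{02}=a_0S$ and $sK_{02}-tK_{12}=a_1S$ are exactly the last two columns of the paper's $\partial_2$, and your third syzygy agrees with the paper's $\partial_3$; the only slip is the quoted rank tuple $(1,4,3,1)$, which should read $(1,3,2,1)$ for the maps (or $(1,4,5,3)$ for the free modules), but this does not affect the argument.
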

\begin{proof}
In fact, we will show more, exhibiting the differentials in the
minimal free resolution. By Theorem~\ref{smoothconic}, we may choose the $f_i$ so that
\[
\begin{array}{ccc}
f_0 &=& ta_0\\
f_1 &=& sa_0+ta_1\\
f_2 &=& sa_1,\\
\end{array}
\]
Then the syzygy described by Lemma~\ref{Alicia} is $(a_1^2, -a_0a_1,a_0^2)$ and 
we also have the bidegree $(2,0)$ syzygy $(s^2,-st,t^2)$. Consider the
$\partial_i$ below. 
\[
\partial_1= \left[ \!
\begin{array}{ccccc}
a_1^2   &f_1   & f_2    & 0    & s^2\\
-a_0a_1 &-f_0  & 0      & f_2  &-st\\
a_0^2   & 0    &-f_0    & -f_1 &t^2 
\end{array}\! \right] 
\]
\[
\partial_2= \left[ \!
\begin{array}{cccc}
t    &s      & 0  & 0\\
-a_1 &0      & 0  &-s\\
a_0  & -a_1  &-s  & t\\
0    & a_0   &t  & 0\\
0    & 0    &a_1 & a_0 
 \end{array}\! \right] 
\]
\[
\partial_3= \left[ \!
\begin{array}{c}
s\\
-t\\
a_0\\
-a_1 
 \end{array}\! \right] 
\]
A check shows $\partial_i\partial_{i+1}=0$, exactness follows by Buchsbaum-Eisenbud \cite{be}.
\end{proof}

\noindent Example~\ref{11case} is a consequence of
Theorem~\ref{smoothconicres}, because for ${\bf d}=(1,1)$, $W$ is basepoint free iff it meets $\Sigma_{1,1}$ in
a smooth conic. When $W \cap \Sigma_{1,n}$ contains three distinct noncollinear points,
the resolution of $I_W$ is also completely determined.\newline

\pagebreak

\begin{thm}\label{3pointintersection}
If $|W \cap \Sigma_{1,n}|$ is finite and contains three noncollinear points,
then the bigraded Betti numbers of $I_W$ are
\begin{table}[ht]
\begin{center}
\begin{supertabular}{|c|}
\hline 
  $0 \leftarrow I_W \leftarrow (-1,-n)^3 \longleftarrow \begin{array}{c}
(-1,-3n)\\
 \oplus \\
 (-2,-2n)^3\\
 \oplus \\
(-3,-n-\mu)\\
\oplus \\
(-3,-2n+\mu)\\
\end{array} \longleftarrow 
\begin{array}{c}
 (-2,-3n)^2\\
 \oplus \\
(-3,-2n)^3\\
 \end{array} \longleftarrow 
(-3,-3n)\leftarrow 0$ \\
\hline 
\end{supertabular}
\end{center}
\label{T3}
\end{table}

\noindent with $0<\mu\leq \floor{n/2}$.
\end{thm}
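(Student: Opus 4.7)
The plan is to put $\{f_0,f_1,f_2\}$ into canonical factorized form, apply Hilbert-Burch to an auxiliary ideal in $\K[u,v]$ to extract the $(3,\ast)$ syzygies, and then assemble the minimal free resolution explicitly, following the strategy of Theorem~\ref{smoothconicres} and concluding by Buchsbaum-Eisenbud.

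First, the three noncollinear intersection points form a basis $f_i=g_ih_i$ of $W$ with $g_i\in R_{(1,0)}$ and $h_i\in R_{(0,n)}$. If the $g_i$ were all proportional, the common zero of $g_0$ in $\Pone$ would produce an entire $\Pone$-worth of basepoints in $\PP$, contradicting basepoint-freeness; hence $\Span\{g_0,g_1,g_2\}=R_{(1,0)}$, and after a change of coordinates in $\{s,t\}$ together with rescaling we may assume $g_0=s$, $g_1=t$, $g_2=s+t$. Basepoint-freeness also forces each pair $h_i,h_j$ with $i\neq j$ to be coprime in $\K[u,v]$.

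Next, I would show that the finiteness of $|W\cap\Sigma_{1,n}|$ forces $\{h_0,h_1,h_2\}$ to be $\K$-linearly independent. The decomposition
\[
\alpha f_0+\beta f_1+\gamma f_2=s(\alpha h_0+\gamma h_2)+t(\beta h_1+\gamma h_2)
\]
lies in $\Sigma_{1,n}$ exactly when $b\alpha h_0-a\beta h_1+(b-a)\gamma h_2=0$ for some $(a:b)\in\Pone$; linear independence of the $h_i$ forces $(\alpha:\beta:\gamma)$ into three discrete points, while linear dependence yields a one-parameter family, contradicting finiteness. With the $h_i$ independent, the ideal $J=(h_0,h_1,h_2)\subset\K[u,v]$ is $(u,v)$-primary and minimally generated by three forms of degree $n$, so Hilbert-Burch provides a $3\times 2$ presentation matrix $N$ with column degrees $(n-\mu,\mu)$ summing to $n$; a column of degree $0$ would encode a linear dependence among the $h_i$, so $1\leq\mu\leq\floor{n/2}$, and the two columns of $N$ furnish syzygies on $(h_0,h_1,h_2)$ in degrees $n+\mu$ and $2n-\mu$. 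For any syzygy $(r_0,r_1,r_2)$ of the $h_i$ of degree $d$, the triple $\bigl(t(s+t)r_0,\ s(s+t)r_1,\ st\,r_2\bigr)$ is a syzygy on $(f_0,f_1,f_2)$ of bidegree $(3,d)$ since its contraction equals $st(s+t)\sum_ir_ih_i=0$; applied to the two columns of $N$, this produces the promised minimal first syzygies in bidegrees $(3,n+\mu)$ and $(3,2n-\mu)$, whose independence from the tautological syzygies of \S\ref{TautSyz} follows from the argument of Corollary~\ref{123ID}.

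Finally, I would assemble explicit matrices $\partial_1,\partial_2,\partial_3$ in the style of Theorem~\ref{smoothconicres}. The matrix $\partial_1$ is $3\times 6$, with columns given by the Lemma~\ref{Alicia} syzygy at $(1,3n)$, the three Koszul syzygies at $(2,2n)$, and the two lifted Hilbert-Burch syzygies; $\partial_2$ is a $6\times 5$ matrix with two columns of bidegree $(2,3n)$ expressing $s\cdot\rho^A$ and $t\cdot\rho^A$ (for $\rho^A$ the Lemma~\ref{Alicia} syzygy) as combinations of the Koszul and $(3,\ast)$ syzygies, together with three columns of bidegree $(3,2n)$ encoding the interaction between the two Hilbert-Burch syzygies and the Koszul syzygies via the identity $N\cdot\mathrm{adj}(N)=h\cdot I$; $\partial_3$ is a single Koszul-type column at $(3,3n)$. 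After verifying $\partial_i\partial_{i+1}=0$ by direct matrix computation, exactness follows from the Buchsbaum-Eisenbud acyclicity criterion. The main obstacle is the explicit form of the three $(3,2n)$ columns of $\partial_2$: writing them in closed form requires simultaneously respecting the Hilbert-Burch structure of $J$ and the Koszul relations among the $f_i$, and the Buchsbaum-Eisenbud depth check on the ideals of maximal minors is delicate at the boundary $\mu=\floor{n/2}$ where the two Hilbert-Burch syzygies share bidegree $(3,3n/2)$.
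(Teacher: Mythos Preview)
Your proposal is correct and follows essentially the same route as the paper: put $W$ in factorized form, apply Hilbert--Burch to the degree-$(0,n)$ factors to produce the two $(3,\ast)$ syzygies, lift them by multiplying by the complementary linear forms, then assemble explicit $\partial_1,\partial_2,\partial_3$ and invoke Buchsbaum--Eisenbud. The only substantive difference is in the linear-independence step for the $h_i$: the paper observes that a dependence among them produces a bidegree-$(2,0)$ syzygy and hence, by Theorem~\ref{smoothconic}, a smooth conic in $W\cap\Sigma_{1,n}$ (contradicting finiteness), whereas you argue directly that dependence yields a one-parameter family of intersection points; both work, and the paper does supply the closed-form $\partial_2$ (with entries built from the Hilbert--Burch matrix and the linear forms) that you flag as the main obstacle.
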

\begin{proof}
As in the previous theorem, we will describe the differentials in the
minimal free resolution.
 Since $W \cap \Sigma_{1,n}$ contains three noncollinear points, we may
choose a basis so that $W = \{l_0g_0, l_1g_1, l_2g_2\}$ with the $l_i$ of bidegree
$(1,0)$ and the $g_i$ of bidegree $(0,n)$. If the $g_i$ are not
linearly independent, then changing basis we see that there are
constants $a,b,c,d$ with
\[W = \langle sg_0, tg_1, (as+bt)\cdot(cg_0+dg_1)\rangle = \langle
  sg_0,tg_1, acsg_0+bdtg_1\rangle,
\]
so $(bdt^2, acs^2, -st)$ is a bidegree $(2,0)$ syzygy on
$I_W$ and Theorem~\ref{smoothconic} applies. 

So we may assume $\{g_0,g_1,g_2\}$ are linearly independent; 
suppose the Hilbert-Burch matrix for $\{g_1, g_2, g_3\}$ 
has columns of degree $a$ and $n-\mu$.  In this case, in addition to the three Koszul
syzygies and the syzygy of Lemma~\ref{Alicia}, the Hilbert-Burch
syzygies can be lifted: if $(b_0, b_1, b_2)$ is a syzygy of degree
$a$ on the $g_i$,  then  $(l_1l_2b_0, l_0l_2b_1, l_0l_1b_2)$ is a syzygy
of bidegree $(3, n+\mu)$ on $I_W$, and similarly for the syzygy of degree
$n-\mu$. Note that $g_0=b_1c_2-c_1b_2, g_1=c_0b_2-b_0c_2,
g_2=b_0c_1-c_0b_1$. A priori, these need not be minimal, but by
constructing the remaining differentials and applying the
Buchsbaum-Eisenbud criterion, we will see that they are. Changing 
basis, we may assume $W$ has basis $=\{sg_0, tg_1, (as+bt)g_2\}$, 
hence the syzygy of Lemma~\ref{Alicia} takes the form
$(-ag_1g_2,-bg_0g_2,g_0g_1)$, and 
\[
\partial_1=\left[ \!
\begin{array}{cccccc}
-ag_1g_2 &tg_1  & (as+bt)g_2    & 0                &t(as+bt)b_0 &t(as+bt)c_0\\
-bg_0g_2 &-sg_0& 0                  & (as+bt)g_2 &s(as+bt)b_1 &s(as+bt)c_1\\
g_0g_1     & 0      &-sg_0            & -tg_1         &stb_2           &stc_2 
\end{array}\! \right] 
\]
A check shows that the two matrices below satisfy $\partial_2\partial_1=0$ and $\partial_3\partial_2=0$:
\[
\partial_2= \left[ \!
\begin{array}{ccccc}
t       &s        & 0           & 0      & 0\\
ag_2 &-bg_2 & as+bt    & 0      &0\\
0      & g_1    &  0          & t       &0\\
g_0  & 0        &  0           & 0      &s\\
0     & 0        &  c_2         &c_1    &c_0 \\
0     & 0         & -b_2       &-b_1   &-b_0 
 \end{array}\! \right] 
\partial_3= \left[ \!
\begin{array}{c}
s\\
-t\\
-g_2\\
g_1 \\
-g_0
 \end{array}\! \right] 
\]
Applying the Buchsbaum-Eisenbud criterion shows the complex is indeed exact. Since $n+\mu\leq 2n-\mu$, then it follows that $\mu\leq \floor{n/2}$.
\end{proof}

\begin{remark}If $n=2$ then $|W \cap \Sigma_{1,n}|$ is generically finite and generically contains three noncollinear points. For $n\geq 3$ this is not the case, so this closed condition is very restrictive. Moreover, for $n=2,3$, from Theorem \ref{3pointintersection}, one has that $\mu=1$.\end{remark}

\section{Higher Segre varieties} 
Since $\Sigma'_{2i+1,n-i}$ has codimension $n-i$, unless $i=n-1$ or $n-2$, the
intersection $W \cap \Sigma'_{2i+1,n-i}$ is generically
empty. Theorems~\ref{smoothconicres} and \ref{3pointintersection} illustrate the principle that when $W \cap
\Sigma'_{2i+1,n-i} \ne \emptyset$ and $i \le n-3$, special behavior can occur. The next theorem makes this explicit when 
$|W \cap \Sigma'_{2i+1,n-i}|$ is finite and contains at least three
noncollinear points.
\begin{thm}\label{liftSyz}
Suppose $W$ has basis $\{g_0h_0, g_1h_1, g_2h_2\}$ 
with
\[
  g_j \in H^0(\OPP(1,i)) \mbox{ and }h_j \in H^0(\OPone(n-i)), \mbox{
    with } 0 \le i \le n-1, 3 \le n.  \mbox{ Then }
\]
\begin{enumerate}
\item $\{h_0,h_1,h_2\}$ and $\{g_0,g_1,g_2\}$ are basepoint free.
\item A syzygy $\{a_0,a_1,a_2\}$ on the $h_i$ lifts to a syzygy
  (possibly non-minimal)
  \[
    \{g_1g_2a_0, g_0g_2a_1,g_0g_1a_2\}
  \]
  on
$I_W$, and similarly for a syzygy on the $g_i$. 
\item If $\{h_0,h_1,h_2\}$ is a pencil, then there is a bidegree
  $(3,2i+n)$ syzygy on $I_W$.
\item If $\{h_0,h_1,h_2\}$ is not a pencil, then it has a
  Hilbert-Burch matrix with columns of degrees $\{n-i-\mu, \mu\}$ in
  the $\{s,t\}$ variables. These give rise to syzygies of type $(2)$ above of bidegree
$(3,n+2i+\mu)$ and $(3,2n+i-\mu)$.
\end{enumerate}
\end{thm}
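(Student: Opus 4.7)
The plan is to establish the four statements in order, extracting constraints on the individual factors $\{h_j\}$ and $\{g_j\}$ from the basepoint-freeness of $W$, verifying the lifting by direct computation, and invoking Hilbert-Burch to produce the generating syzygies in the nonpencil case. No genericity hypothesis is needed.

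For (1), I argue contrapositively. If the $h_j$ share a zero $(u_0{:}v_0)\in\Pone$, then for any $(s_0{:}t_0)$ the point $[(s_0{:}t_0),(u_0{:}v_0)]\in\PP$ annihilates every $g_jh_j$, giving a basepoint of $W$; likewise, a common zero $p\in\PP$ of the $g_j$ is a basepoint of $W$. Both contradict the hypothesis.

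For (2), if $\sum_j a_jh_j=0$, a direct computation
\[
  \sum_j (g_kg_\ell a_j)(g_jh_j) \;=\; g_0g_1g_2 \sum_j a_jh_j \;=\; 0
\]
(with $\{j,k,\ell\}=\{0,1,2\}$) exhibits the lifted syzygy, and the symmetric statement for $g_j$-syzygies is identical. Parts (3) and (4) now amount to producing explicit syzygies on the $\{h_j\}$ and applying (2). If the $h_j$ form a pencil, a constant dependence $\sum c_jh_j=0$ lifts to the syzygy $(c_0g_1g_2,c_1g_0g_2,c_2g_0g_1)$, whose entries have bidegree $(2,2i)$, producing an $I_W$-syzygy of bidegree $(2,2i)+(1,n)=(3,2i+n)$. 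In the nonpencil case, $\{h_0,h_1,h_2\}\subset\K[u,v]$ is basepoint free on $\Pone$ by (1), so the ideal they generate is $\mm_{(u,v)}$-primary and hence of codimension $2$; by Auslander-Buchsbaum its projective dimension is $1$, so Hilbert-Burch provides a $3\times 2$ matrix $N$ whose $2\times 2$ minors recover the $h_j$ up to sign. Its two columns are syzygies of $\{u,v\}$-degrees $n-i-\mu$ and $\mu$ summing to $\deg h_j=n-i$. Lifting each via (2) and accounting for the shifts $(1,n)+(2,2i)+(0,\alpha)$ for $\alpha\in\{\mu,n-i-\mu\}$ gives syzygies on $I_W$ of bidegrees $(3,n+2i+\mu)$ and $(3,2n+i-\mu)$.

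The only nontrivial ingredient is Hilbert-Burch, and the only hypothesis to verify is its applicability---that the ideal $(h_0,h_1,h_2)\subset\K[u,v]$ has codimension $2$---which is precisely (1). Everything else is bookkeeping of bidegrees; in particular, the theorem makes no minimality claim (as signalled by ``possibly non-minimal'' in (2)), and whether or not these lifted syzygies survive in the minimal resolution of $I_W$ is a separate question addressed by the Segre-variety analysis in Theorems~\ref{smoothconicres} and~\ref{3pointintersection}.
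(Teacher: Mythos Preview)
Your proof is correct and follows essentially the same approach as the paper's: contrapositive for (1), direct verification for (2), and application of (2) to the constant relation in the pencil case and to the Hilbert--Burch syzygies in the nonpencil case for (3) and (4). You have simply filled in the bidegree bookkeeping and the justification for why Hilbert--Burch applies (codimension~$2$ via (1)), which the paper leaves implicit.
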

\begin{proof}
For $(1)$, if the $g_i$ or $h_i$ are not basepoint free, then 
neither is $W$, and for $(2)$, the result is immediate. For $(3)$, 
if the $h_i$ are a pencil, then $I_W = \langle g_0h_0, g_1h_1,
g_2(ah_0+bh_1)\rangle$ for constants $a,b$, 
and so $(ag_1g_2,bg_0g_2, -g_0g_1)$ is the desired syzygy,
and $(4)$ follows by applying $(2)$ to the Hilbert-Burch 
syzygies.
\end{proof}
The previous theorem deals with the situation where there is a basis for $W$
where all three elements factor in the same way. Even if only one or
two elements factor, the minimal free resolutions 
often behave differently from the generic case. Computations
suggest that all nongeneric behavior in the minimal free resolution 
stems from factorization:
\begin{conj}\label{nongenericR}
If the bigraded minimal free resolution of $I_W$ has 
nongeneric bigraded Betti numbers, then for some $i \le n-3$, 
$W \cap \Sigma'_{2i+1, n-i} \ne \emptyset$.
\end{conj}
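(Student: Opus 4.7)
The plan is to argue the contrapositive: assume $W$ is basepoint free with $W\cap \Sigma'_{2i+1,n-i}=\emptyset$ for every $i\le n-3$, and show the bigraded Betti numbers of $I_W$ take their generic values. The overall strategy is to propagate this geometric hypothesis through the three layers determining the Betti numbers: the multiplication maps computing $H_1$, the module $H_1$ itself, and the Koszul complex $\m_\bullet$ on $H_1$.

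By Theorem~\ref{th:KoszulM} and the vanishings $H(\m_\bullet)_3=H(\m_\bullet)_4=0$ established in its proof, it suffices to show that both the bigraded dimensions $\dim_{\K}(H_1)_\a$ and the ranks of the Koszul differentials $\varphi_i$ in $\m_\bullet$ take their generic values. Corollary~\ref{cor:H1} writes $(H_1)_\a$ as the kernel of a block-diagonal map $\delta_\a=\mathrm{diag}(\phi_1,\phi_2)$ whose blocks are multiplication-by-$\f$ maps between graded pieces of shifted canonical modules, and Lemma~\ref{lem:generic} identifies genericity of $\dim(H_1)_\a$ with $\phi_1$ and $\phi_2$ having maximal rank. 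Thus the first reduction is: assuming $W$ avoids all forbidden Segre loci, prove that each $\phi_j$ has maximal rank.

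The key geometric step is to show that a rank drop of $\phi_j$ at a bidegree $\a$ in the non-trivial region identified in Section~\ref{sec:1n} forces a nonzero element of $W$ to factor as a bidegree $(1,i)$ form times a degree-$(n-i)$ form in $\{u,v\}$, for some $i$ determined by $\a$. The natural setup is to take a kernel vector $p$ of $\phi_1$ and pair it, via the duality in the canonical module, against $f_0,f_1,f_2$ to extract a common factor; the map $\psi_i$ of diagram~\eqref{e1} then identifies this factored form with a point of $W\cap\Sigma'_{2i+1,n-i}$. The main obstacle is pinning down that the resulting $i$ is at most $n-3$: the values $i=n-2,n-1$ correspond to Segre strata $W$ always meets (generically in a finite set and in a conic, respectively), and those intersections only produce the generic syzygies of Theorems~\ref{smoothconicres} and~\ref{3pointintersection}. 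Establishing the sharp bound $i\le n-3$ precisely where a true non-genericity occurs looks like the technical crux.

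A complementary direction is the converse, already half-implicit in Theorem~\ref{liftSyz}: if $W$ does meet some $\Sigma'_{2i+1,n-i}$ with $i\le n-3$, then this forces a minimal syzygy beyond the generic count. Theorem~\ref{liftSyz} does this under the restrictive assumption that an entire basis of $W$ factors, so the non-trivial refinement needed is a \emph{one-sided} version in which only a single element of $W$ factors. The analysis of the factorization-governed special cases in \S\ref{sec:1n} suggests this refinement can be attacked bidegree by bidegree, and combining it with the contrapositive direction above would complete the argument.
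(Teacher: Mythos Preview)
The statement you are attempting to prove is labeled as a \emph{Conjecture} in the paper (Conjecture~\ref{nongenericR}), and the paper offers no proof of it; it is presented as an open problem motivated by computational evidence. So there is no paper proof to compare your proposal against.

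That said, your proposal is not a proof but a strategy outline, and you yourself identify the gap: the claim that a rank drop of $\phi_1$ at some bidegree $\a$ forces a point of $W$ to lie on $\Sigma'_{2i+1,n-i}$ for some $i\le n-3$ is exactly the content of the conjecture, and you give no mechanism for producing such a factorization from a kernel element, nor any argument bounding the resulting $i$ away from $n-2$ and $n-1$. Pairing a kernel vector $p$ against the $f_\ell$ via duality does not by itself extract a common factor of any $f_\ell$; a single linear relation among the images $\phi_1(f_\ell\cdot p)$ does not force any individual $f_\ell$ to split. This step is the entire difficulty, and the proposal does not advance it.

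There is also a concrete error in your ``complementary direction.'' You propose to prove the converse---that $W\cap\Sigma'_{2i+1,n-i}\ne\emptyset$ for some $i\le n-3$ forces nongeneric Betti numbers---and then combine it with the contrapositive. But the contrapositive \emph{is} the conjecture, so the converse is logically irrelevant to proving it; and in any case the converse is false. The example immediately following the conjecture in the paper exhibits a basepoint free $W$ in bidegree $(1,5)$ with $W\cap\Sigma'_{3,4}$ consisting of three points (so $i=1\le n-3=2$), yet whose bigraded Betti numbers agree with the generic ones. So the refinement of Theorem~\ref{liftSyz} you are aiming for cannot hold in the form you state.
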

\subsection{Intersection with $\Sigma'_{3, n-1}$}

We now examine the situation where the elements of $W$
factor into components of degree $(1,1)$ and $(0,n-1)$. The next
example shows that the converse to Conjecture~\ref{nongenericR} need
not hold.

\begin{exm}
Suppose $a,b,c$ are basepoint free elements of bidegree $(1,1)$, and $d,e,f$ 
are generic elements of bidegree $(0,4)$, with $I_W = \langle ad, be, cf 
\rangle$. A computation shows that the Betti numbers of $I_W$ are as
in the diagram below.
\begin{table}[ht]
\begin{center}
\begin{supertabular}{|c|}
\hline 
  $0 \leftarrow I_W \leftarrow (-1,-5)^3 \longleftarrow \begin{array}{c}
(-1,-15)\\
 \oplus \\
 (-2,-10)^3\\
 \oplus \\
(-3,-9)^5\\
\oplus \\
(-4,-8)^3\\
\oplus \\
(-9,-7)\\
\end{array} \longleftarrow 
\begin{array}{c}
 (-2,-15)^2\\
 \oplus \\
(-3,-10)^6\\
 \oplus \\
(-4,-9)^6\\
\oplus \\
(-9,-8)^2\\
 \end{array} \longleftarrow 
\begin{array}{c}
 (-3,-15)\\
 \oplus \\
(-4,-10)^3\\
\oplus \\
(-9,-9)\\
  \end{array}\leftarrow 0$ \\
\hline 
\end{supertabular}
\end{center}
\label{T5}
\end{table}

Lemma~\ref{Alicia} explains the $(1,15)$ syzygy, and there are three Koszul
syzygies. By construction, $W \cap \Sigma'_{3,4}$ consists of three points, 
and the bigraded Betti numbers for this example agree with the generic 
case; Theorem~\ref{1syz3m} explains the five $(3,9)$ syzygies, whereas 
Theorem~\ref{liftSyz} only accounts for two of them.
\end{exm}

\begin{thm}\label{smoothconicres2}
 Suppose $W = \Span \{g_0h_0, g_1h_1, g_2h_2\}$, with $g_i$ of
 degree $(1,1)$ and the $h_i$ a pencil of degree $(0,n-1)$, so that
 \[
   I_W =\langle g_0h_0, g_1h_1, g_2(ah_0+bh_1)\rangle.
 \]
If $W \cap \Sigma_{1,n}$ is empty and $W\cap \Sigma'_{3,n-1}$ contains
three noncollinear points, then there are minimal first syzygies of degrees
\[
 \{   ( -1,-3n), (-2,-2n)^3, (-3,\!-n\!-\!2), (-3,\!-2n\!+\!1)^2,
 (-6,\!-2n\!+\!2) \}
\]
\end{thm}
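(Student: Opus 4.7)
The approach mirrors Theorems~\ref{smoothconicres} and~\ref{3pointintersection}: I would exhibit each claimed minimal first syzygy explicitly using the structure $f_i = g_i h_i$ and the pencil relation $h_2 = a h_0 + b h_1$, and then confirm minimality and completeness via the Koszul-homology framework of Section~\ref{ssec:Bettigen}. The syzygy of bidegree $(1, 3n)$ comes from Lemma~\ref{Alicia}, the three Koszul syzygies of bidegree $(2, 2n)$ are automatic, and the bidegree $(3, n+2)$ syzygy is $(a g_1 g_2,\, b g_0 g_2,\, -g_0 g_1)$, obtained by applying Theorem~\ref{liftSyz}(3) to the pencil $\{h_0, h_1, h_2\}$.

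For the two bidegree $(3, 2n-1)$ syzygies, I would analyze $\sum_i P_i f_i = 0$ for $P_i$ of bidegree $(2, n-1)$. Grouping by $h_0$ and $h_1$ gives
\[
(P_0 g_0 + a P_2 g_2)\, h_0 \;+\; (P_1 g_1 + b P_2 g_2)\, h_1 \;=\; 0,
\]
and since $\{h_0, h_1\}$ is a basepoint-free pencil in $\K[u,v]$, hence a regular sequence in $R$, there exists $Q \in R_{(3, 1)}$ with $P_0 g_0 + a P_2 g_2 = -Q h_1$ and $P_1 g_1 + b P_2 g_2 = Q h_0$. Eliminating $P_2$ forces $h_2 Q \in \langle g_0, g_1 \rangle_{(3, n)}$. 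Noncollinearity of the three points in $W \cap \Sigma'_{3, n-1}$ makes $\{g_0, g_1, g_2\}$ linearly independent, so $\langle g_0, g_1 \rangle$ is a codimension-$2$ complete intersection with explicit Hilbert function, while $W \cap \Sigma_{1, n} = \emptyset$ excludes the existence of any bidegree $(3, n)$ syzygy (Theorem~\ref{smoothconic}). A dimension count modulo the contributions of the $(3, n+2)$ syzygy shifted by $R_{(0, n-3)}$ isolates exactly two new minimal generators.

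The main obstacle will be detecting the bidegree $(6, 2n-2)$ syzygy: its entries of bidegree $(5, n-2)$ are too large to arise directly from lifting a lower-bidegree structural syzygy via Theorem~\ref{liftSyz}. Here I would invoke Corollary~\ref{cor:H1} to describe $(H_1)_{(6, 2n-2)}$ explicitly as the kernel of a map between shifted canonical modules, and then apply Theorem~\ref{th:KoszulM}, which expresses $\beta_{1, \a}$ as $\dim H(\m)_{1, \a} - \dim H(\m)_{2, \a}$, to extract the single minimal generator surviving after higher Koszul corrections. The same framework, applied bidegree-by-bidegree using the support table following Theorem~\ref{thm:H1}, simultaneously shows that $\beta_{1, \a} = 0$ at every bidegree outside the list. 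Minimality of each constructed syzygy then follows by the bidegree comparison argument of Corollary~\ref{123ID}, and one may optionally assemble the full complex and verify exactness via the Buchsbaum-Eisenbud criterion~\cite{be} as in the proofs of Theorems~\ref{smoothconicres} and~\ref{3pointintersection}.
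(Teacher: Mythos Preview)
Your plan for the tautological syzygies and the $(3,n+2)$ syzygy matches the paper. There are two substantive divergences.

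\textbf{The $(3,*)$ syzygies.} Your direct analysis via the $h_0,h_1$ regular sequence is plausible but sketchy at the dimension count. The paper handles all three $(3,*)$ syzygies at once via Theorem~\ref{1syz3m}: writing $f_i=sq_i+tq_{i+3}$, the pencil hypothesis forces $\Span\{q_0,\ldots,q_5\}=\Span\{uh_0,vh_0,uh_1,vh_1\}$, a four-dimensional space whose Hilbert--Burch matrix has column degrees $\{1,1,n-2\}$. Theorem~\ref{1syz3m} (in its four-generator variant) then produces syzygies in bidegrees $(3,2n-1),(3,2n-1),(3,n+2)$ simultaneously, and Corollary~\ref{123ID} gives independence from the tautological ones. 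This is cleaner than your separate dimension count.

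\textbf{The $(6,2n-2)$ syzygy.} Here your proposal has a genuine gap. Invoking Corollary~\ref{cor:H1} and Theorem~\ref{th:KoszulM} reduces $\beta_{1,\a}$ to a difference of Koszul homology dimensions of $H_1$, but computing those requires knowing the multiplication maps by $s,t,u,v$ on $H_1$ in the relevant bidegrees, not merely the dimensions of the graded pieces; you give no mechanism for this. The paper instead pushes \emph{your own} $h_0,h_1$ grouping further: rewriting a syzygy as $(s_0g_0+as_2g_2)h_0+(s_1g_1+bs_2g_2)h_1=0$ and expanding in the $(0,1)$ variables at degree $2n-2$ yields a map $\psi\colon\Oc_{\P^1}^{3n-3}(-1)\to\Oc_{\P^1}^{2n-1}$ whose entries are linear forms in $s,t$. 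The hypothesis $W\cap\Sigma_{1,n}=\emptyset$ forces $\coker\psi=0$, so $\ker\psi$ is locally free of rank $n-2$ with first Chern class $3-3n$. The $(3,n+2)$ syzygy contributes an $(n-3)$-dimensional subspace, leaving exactly one new element, whose Chern class is $3(n-3)-(3n-3)=-6$, producing the unique $(6,2n-2)$ syzygy.

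\textbf{An overclaim.} You assert the framework ``simultaneously shows that $\beta_{1,\a}=0$ at every bidegree outside the list,'' and suggest assembling a full complex and applying Buchsbaum--Eisenbud. The theorem does not claim completeness, and the paper does not prove it: the full Betti table appears only as a computational observation in Example~\ref{WholeReshigherSegre}. Restrict your argument to existence and minimality of the listed syzygies.
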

\begin{proof}
The syzygies of degree $(1,3n)$ and $(2,2n)$ are tautological.
The syzygy of degree $(3,n+2)$ can be explained by
Theorem~\ref{liftSyz} $(3)$, but it is more enlightening to treat the
syzygies of degree $(3,*)$ as a group. Write ${\bf f}$ as in
Theorem~\ref{1syz3m}:
\[
\{  sq_0+tq_1, sq_2+tq_3, sq_4+tq_5\}.
\]
Using that the $h_i$ are a pencil and expanding, we find that
\[
  \Span\{q_0, \ldots, q_5\} = \Span\{uh_0, vh_0,uh_1,vh_1\}.
\]
Since $\{h_0,h_1\}$ are basepoint free, they are a complete
intersection, so the Hilbert-Burch matrix for $\{uh_0, vh_0,uh_1,vh_1\}$ is
\[
  \left[ \!
\begin{array}{ccc}
  v & 0  &p_0   \\
-u & 0  &p_1 \\
  0     & v      & p_2 \\
0      &-u    &  p_3
\end{array}\! \right], 
\]
with the $p_i$ of degree $n-2$. In particular, the columns have degrees $\{b_1,b_2,b_3\} =
\{1,1,n-2\}$, which by Theorem~\ref{1syz3m} yields syzygies in
degrees
\[
  \{(3,2n-1), (3,2n-1), (3,n+2)\}
\]
By Corollary~\ref{123ID}, the seven syzygies constructed so far are
independent. We next prove there exists a unique first syzygy of
degree $(6,2n-2)$. Let $(s_0,s_1,s_2)$ be a syzygy on $I_W$, and
rewrite it as below
\begin{equation}\label{rewriteSyz}
  s_0g_0h_0+s_1g_1h_1+s_2g_2(ah_0+bh_1)=0 = (s_0g_0+as_2g_2)h_0+(s_1g_1+bs_2g_2)h_1.
\end{equation}
So $(s_0g_0+as_2g_2, s_1g_1+bs_2g_2)$ is a syzygy on the complete
intersection $(h_0,h_1)$, which implies that 
\[
  S = \left[ \!
\begin{array}{c}
 s_0g_0+as_2g_2\\
s_1g_1+bs_2g_2
 \end{array}\! \right], 
\]
is in the image of the Koszul syzygy $K$ on $\{h_0,h_1\}$:
\[
 K= \left[ \!
\begin{array}{c} h_1\\
-h_0
 \end{array}\! \right].
\]
One possibility is $S=0$, which leads to a syzygy of
Theorem~\ref{liftSyz} type 3, of bidegree $(3,n+2)$, which we have
accounted for, so we may suppose
$S$ is nonzero. This means $S = p\cdot K$ for some polynomial $p$.
Since the $h_i$ are degree $(0,n-1)$, the lowest possible degree for
the $s_i $ in the $(0,1)$ variables is $n-2$. We show that in degree
$(a,2n-2)$, there is a unique minimal syzygy of degree $(6,2n-2)$
which is not a multiple of the syzygy of degree $(3,n+2)$.

To see this, we write out Equation~\ref{rewriteSyz}, collecting the coefficients of
\[
 \{ u^{2n-2}, vu^{2n-1},\ldots, v^{2n-2}\}.
\]
Each $s_i$ is of degree $n-2$ in the $(0,1)$ variables, so there are
$3(n-1)$ columns, and we obtain a $2n-1 \times 3n-3$ matrix 
\[
 \mathcal{O}^{3n-3}_{\P^1}(-1)\stackrel{\psi}{\longrightarrow} \mathcal{O}^{2n-1}_{\P^1}.
 \]
The coefficients in the $(1,0)$ variables of the $s_i$ (written as
polynomials in the $(0,1)$ variables) correspond to elements of the kernel
of this matrix. The nonzero entries of $\psi$ come from the six linear forms in the $(1,0)$ variables,
obtained by writing the $(1,1)$ components $g_i$ as 
\[
g_i =a_iu+b_iv, \mbox{ with } \{a_i,b_i \} \in \K[s,t]_1.
\]
Since $W$ is basepoint free and $W \cap \Sigma_{1,n} = \emptyset$, the
cokernel  of $\psi$ is zero, so $\ker(\psi)$ is free of rank $n-2$,
with first Chern class $3-3n$. The key is that the unique syzygy of
degree $(3,n+2)$ generates $n-3$ independent syzygies of degree
$(3,2n-2)$, which follows from the computation 
\[
  h^0(\mathcal{O}_{\P^1} ((2n-2) -(n+2)))=h^0(\mathcal{O}_{\P^1}(n-4))=n-3.
\]
Since $\ker(\psi)$ has rank $n-2$, this means there is a single
additional element in the kernel, which has first Chern class 
\[
3(n-3) - (3n-3) = -6,
\]
yielding a unique first syzygy of degree $(6,2n-2)$.
\end{proof}
\begin{remark}If $W \cap \Sigma_{1,n} \ne \emptyset$, some of the $g_i$
  will factor. If all three factor, we are in the situation of
  Theorem~\ref{3pointintersection}. If only one or two factor, then
  the first syzygy of degree $(6,2n-2)$ changes to a syzygy of degree $(4,2n-2)$ if $|W \cap 
  \Sigma_{1,n}|=2$,  and to a syzygy of degree $(5,2n-2)$ if $|W \cap   \Sigma_{1,n}|=1$.
 \end{remark} 
\begin{exm}\label{WholeReshigherSegre}
With the hypotheses of Theorem~\ref{smoothconicres2}, computations
suggest that the bigraded Betti numbers are 
\begin{table}[ht]
\begin{center}
\begin{supertabular}{|c|}
\hline 
  $0 \leftarrow I_W \leftarrow (-1,\!-n)^3 \stackrel{\partial_1}{\longleftarrow\!\!\!} \begin{array}{c}
( -1,-3n)\\
 \oplus \\
 (-2,-2n)^3\\
 \oplus \\
(-3,\!-n\!-\!2)\\
 \oplus \\
(-3,\!-2n\!+\!1)^2\\
 \oplus \\
(-6,\!-2n\!+\!2)\\
\end{array} \stackrel{\partial_2}{\!\!\longleftarrow\!\!} 
\begin{array}{c}
 (-2,-3n)^2\\
 \oplus \\
(-3,-2n)^4\\
\oplus \\
(-6,\!-2n\!+\!1)^2\\
 \end{array} \stackrel{\partial_3}{\!\!\longleftarrow\!\!} 
\begin{array}{c}
 (-3,\!-3n)\\
 \oplus \\
(-6,\!-2n)\\
 \end{array}
\leftarrow 0$ \\
\hline 
\end{supertabular}
\end{center}
\label{T6}
\end{table}

\noindent Note that the numbers
\[
\beta_{2,\{*,-3n\}} \mbox{ and } \beta_{3,\{*,-3n\}} 
\]
are explained by Proposition~\ref{prop3.2}; one way to prove the diagram above is the correct betti table would be to determine explicitly the
$(6,2n-2)$ syzygy, and then write down the differentials and apply
the Buchsbaum-Eisenbud criterion for exactness.
\end{exm}

\subsection{Connections to the Hurwitz discriminant and Sylvester map.}

We consider the case $d=(1,n)$ with $n=5$ at the particular bidegree $(3,8)$. The aim is to point out the tip of the iceberg of the relation between non generic
rank behavior in the matrix of $\phi_1$ from Section~\ref{sec:generalcase}
and the different intersections
of $W$ with the variety of elementary tensors associated to all the decompositions of $(1,n) = (1,i)+(0,n-i)$.

We set $n=5$. According to Theorem~\ref{1syz3m},
when the $f_i$ are generic there are syzygies in degree $(3,m)$ for $m \ge 2\cdot 5 -0-1=9$ ($\kappa =0$ for $n=5$).
But in this bidegree, we can read in the table of Section~\ref{sec:generalcase}
that the matrix of $\phi_1$ has always full rank in the basepoint free case.

We then move one step to bidegree $(3,8) = (3, 2 \cdot 5-2)$
and we try to detect the existence of nontrivial syzygies there.
In this bidegree $\phi_1:  R_{(0,5)} \to R^3_{(1,0)}$, so we get a $6 \times 6$ matrix $M$, constructed as follows,
according to Example~\ref{ex:maps}.
Write
\[
  f_\ell = s \left(\sum_{j=0}^5 p_{\ell j} u^j v^{5-j}\right) + t
  \left(\sum_{j=0}^5 q_{\ell j} u^j v^{5-j}\right), \mbox{ for  }\ell=0,1,2.
  \]
Then
\[ M \, = \, 
\left(\begin{array}{ccc}
p_{00} & \dots  & p_{05} \\
q_{00} & \dots  & q_{05}\\ \hline
p_{10} & \dots  & p_{15} \\
q_{10} & \dots  & q_{15}\\ \hline
p_{20} & \dots  & p_{25} \\
q_{20} & \dots  & q_{25}
\end{array}\right).
\]
Let $S_{(3,8)}$ denote the hypersurface ${\bf V}(\det(M))$. 
Consider the $9$-dimensional variety $\Sigma'_{7,2} \subset \P^{11}$
of bihomogeneous polynomials factoring as the product of a degree $(1,3)$ polynomial and a degree $(0,2)$ polynomial. We expect $\dim(\Sigma'_{7,2} \cap W)=0$.  
It would be interesting to relate the geometry of $S_{(3,8)}$ to special features of this intersection (for example, finitely 
many points but with {\em special} properties, or a curve). Note that $W$ lives in the Grassmanian of planes in $\P^{11}$ and 
there is a polynomial, called the {\em Hurwitz discriminant} by Sturmfels~\cite{Sturm}, which vanishes whenever the 
intersection  $\Sigma'_{7,2} \cap W$ does not consist of ${\rm deg}(\Sigma'_{7,2})$ many points. 

\subsection{Concluding remarks} We close with a number of questions:
\begin{enumerate}
\item What happens when there are many ``low degree'' first syzygies?
  As shown in \cite{ds} and \cite{ssv}, linear first syzygies impose
  strong constraints.
\item $W$ is a point of $\mathbb{G}(2,2n+1)$. How does the Schubert
  cell structure impact the free resolution of $I_W$?
  \item What happens in other bidegrees? For other toric surfaces?
\item Are there special cases such as in Theorem~\ref{smoothconicres},
Theorem~\ref{3pointintersection}, Theorem~\ref{smoothconicres2} which 
are of interest to the geometric modeling community?
\item For computation of toric cohomology, it is sufficient to have a
  complex with homology supported in $B$, rather than an exact
  sequence. This is studied by Berkesch-Erman-Smith in \cite{bes}, and is a very active area
  of research.
\end{enumerate}

\noindent{\bf Acknowledgments.} Most of this paper was written while
the third author was visiting Universidad de Buenos Aires on a Fulbright
grant, and he thanks the Fulbright foundation for support and his 
hosts for providing a wonderful visit. All computations were done
using {\tt Macaulay2} \cite{M2}.

\bibliographystyle{amsplain}

\begin{thebibliography}{10}

\bibitem{acd} {\scshape A.\ Aramova, K.\ Crona, and E.\ De Negri}, 
Bigeneric initial ideals, diagonal subalgebras and bigraded Hilbert functions, 
\textit{J.\ Pure Appl.\ Algebra }{\bf 150} (2000), 215--235.

\bibitem{bes} {\scshape C.\ Berkesch, D.\ Erman, G.G.\ Smith},
 Virtual resolutions for a product of projective spaces,
\textit{Algebraic Geometry}, {\bf 7} (2020), 460-481.

\bibitem{bdd} {\scshape N.\ Botbol, A.\ Dickenstein, M.\ Dohm},
 Matrix representations for toric parametrizations,
\textit{Comput. Aided Geom. D.} {\bf 26}  (2009),  757--771. 

\bibitem{be} {\scshape D.\ Buchsbaum, D.\ Eisenbud}, 
What makes a complex exact?,
\textit{J. Algebra} {\bf 25} (1973), 259--268.

\bibitem{bz} {\scshape W.\ Burau, J.\ Zeuge},
\"Uber den Zusammenhang zwischen den Partitionen einer 
nat\"urlichen Zahl und den linearen Schnitten der einfachsten 
Segremannigfaltigkeiten. 
\textit{J. Reine Angew. Math} {\bf 274-75} (1975), 104-111. 

\bibitem{bc} {\scshape L.\ Bus\'e, M.\ Chardin},
Implicitizing rational hypersurfaces using approximation complexes,
\textit{J. Symb. Comput.} {\bf 40} (2005), 1150--1168. 

\bibitem{c} {\scshape D.\ Cox}, 
Curves, surfaces and  syzygies, in 
``Topics in algebraic geometry and geometric modeling",
\textit{Contemp. Math.} {\bf 334} (2003)  131--150.

\bibitem{cds} {\scshape D.\ Cox, A.\ Dickenstein, H.\ Schenck}, 
A case study in bigraded commutative algebra, in 
``Syzygies and Hilbert Functions", edited by Irena Peeva, 
Lecture notes in Pure and Applied Mathematics 254, (2007), 67--112.

\bibitem{cgz} {\scshape D.\ Cox, R. \ Goldman, M. \ Zhang}, On the
 validity of implicitization by moving quadrics for rational surfaces
with no basepoints, \textit{J. Symb. Comput.} {\bf 29} (2000), 419--440.

\bibitem{d} {\scshape W.L.F.\ Degen}, The types of rational $(2,1)$-B\'ezier surfaces. {\em Comput. Aided Geom. D.} {\bf 16}  (1999), 639--648.


\bibitem{ds}  {\scshape E. Duarte, H. Schenck},
  Tensor product surfaces and linear syzygies,
  \textit{P. Am. Math. Soc.}, {\bf 144} (2016), 65--72.

\bibitem{ebig} {\scshape D.\ Eisenbud}, \textit{Commutative Algebra with 
a view towards Algebraic Geometry}, Springer-Verlag,
Berlin-Heidelberg-New York, 1995.

\bibitem{GeomSyz} {\scshape D.\ Eisenbud}, \textit{Geometry of
    Syzygies}, Springer-Verlag, Berlin-Heidelberg-New York, 2005.

\bibitem{egl} {\scshape M.\ Elkadi, A.\ Galligo and  T.\ H.\ L\^{e}},
Parametrized surfaces in $\P^3$ of bidegree $(1,2)$,
\textit{Proceedings of the 2004 International Symposium on Symbolic and
Algebraic Computation}, ACM, New York, 2004, 141--148.

\bibitem{FL} {\scshape R. Fr\"oberg, S. Lundqvist}, 
Questions and conjectures on extremal Hilbert series. 
\textit{Rev. Union Mat. Argent.} {\bf 59} (2018), 415--429. 

\bibitem{gl} {\scshape A.\ Galligo, T.\ H.\ L\^{e}},
General classification of $(1,2)$ parametric surfaces in ${\mathbb P}^3$, 
in ``Geometric modeling and algebraic geometry'', Springer, Berlin, 
(2008) 93--113.


\bibitem{M2} {\scshape D.\ Grayson, M.\ Stillman},
 \textit{Macaulay2, a software system for research in algebraic
   geometry},
 Available at http://www.math.uiuc.edu/Macaulay2/.
  
\bibitem{harris} {\scshape J.\ Harris}, \textit{Algebraic Geometry, A
First Course}, Springer-Verlag, Berlin-Heidelberg-New York, 1992.

\bibitem{h} {\scshape R.\ Hartshorne}, \textit{Algebraic Geometry},
Springer-Verlag, Berlin-Heidelberg-New York, 1977.

\bibitem{hsv1} {\scshape J.\ Herzog, A.\ Simis, W.\ Vasconcelos}
 Approximation complexes of blowing-up rings,
\textit{J. Algebra} {\bf 74} (1982), 466--493.

\bibitem{hsv2} {\scshape J.\ Herzog, A.\ Simis, W.\ Vasconcelos}
Approximation complexes of blowing-up rings II, 
\textit{J. Algebra} {\bf 82} (1983), 53--83.

\bibitem{hw} {\scshape J.~W.\ Hoffman and H.~H.\ Wang}, 
Castelnuovo-Mumford regularity in biprojective spaces, 
{\em Adv. Geom.} {\bf 4} (2004), 513--536.

\bibitem{MS} {\scshape D.\ Maclagan, G.G. Smith}, 
Multigraded Castelnuovo-Mumford regularity,
\textit{J. Reine Angew. Math.}, {\bf 57} (2004), 179-212.

\bibitem{r} {\scshape T.\ R\"omer}, 
Homological properties of bigraded algebras, 
{\em Illinois J. Math.}, {\bf 45} (2001), 1361--1376.
 
\bibitem{ssv} {\scshape H.\ Schenck, A.\ Seceleanu, J.\ Validashti},
Syzygies and singularities of tensor product surfaces of bidegree $(2,1)$, 
\textit{Math. Comp.} {\bf 83} (2014), 1337--1372.

\bibitem{sc} {\scshape T.\ W.\ Sederberg, F.\ Chen}, Implicitization
using moving curves and surfaces, in {\sl Proceedings of
SIGGRAPH}, 1995, 301--308.

\bibitem{Sturm} {\scshape B. Sturmfels}, 
The Hurwitz form of a projective variety,
\textit{J. Symb. Comput.}, {\bf 79} (2017), 186--196.

\bibitem{z1} {\scshape S.\ Zube},
Correspondence and $(2,1)$-B\'ezier surfaces,
\textit{Lith. Math. J.} {\bf 43}  (2003), 83--102.

\bibitem{z2} {\scshape S.\ Zube},
Bidegree $(2,1)$ parametrizable surfaces in ${\mathbb P}^3$,
\textit{Lith. Math. J.} {\bf 38}  (1998), 291--308.

\end{thebibliography}

\end{document}